\DeclarePairedDelimiter{\ceil}{\lceil}{\rceil}
\DeclarePairedDelimiter{\floor}{\lfloor}{\rfloor}
\providecommand{\N}{\mathbb{N}}
\providecommand{\R}{\mathbb{R}}
\providecommand{\Z}{\mathbb{Z}}
\providecommand{\abs}[1]{\left\vert#1\right\vert}
\providecommand{\set}[1]{\left\{#1\right\}}
\providecommand{\floor}[1]{\lfloor\{#1\rfloor\}}
\providecommand{\paren}[1]{\left( #1 \right)}
\providecommand{\brac}[1]{\left [ #1 \right]}
\providecommand{\set}[1]{\left { #1 \right}}
\newcommand{\s}[1]{\begin{equation*} \begin{split} #1 \end{split} \end{equation*}}
\providecommand{\eqref}[1]{\left(\ref{#1}\right)}
\newtheorem{theorem}{Theorem}
\newtheorem{Lemma}[theorem]{Lemma}
\newtheorem{conj}[theorem]{Conjecture}
\newtheorem{Definition}[theorem]{Definition}
\newtheorem{Proposition}[theorem]{Proposition}
\newtheorem{Question}[theorem]{Question}
\newtheorem{cor}[theorem]{Corollary}
\newcommand{\PH}{\mathit{PH}}
\begin{document}
\title[Persistent Homology and the Upper Box Dimension]{Persistent Homology and \\ the Upper Box Dimension}
\author{Benjamin Schweinhart}
\date{July 2019}

\begin{abstract}
We introduce a fractal dimension for a metric space defined in terms of the persistent homology of extremal subsets of that space. We exhibit hypotheses under which this dimension is comparable to the upper box dimension; in particular, the dimensions coincide for subsets of $\R^2$ whose upper box dimension exceeds $1.5.$ These results are related to extremal questions about the number of persistent homology intervals of a set of $n$ points in a metric space. 
\end{abstract} 
\maketitle

\section{Introduction}

Several notions of fractal dimensions based on persistent homology have been proposed in the literature, including in the PhD thesis of Vanessa Robins~\cite{2000robins}, in a paper written by Robert MacPherson and the current author~\cite{2012macpherson}, and by Adams et al.~\cite{2019adams}. In that work, empirical estimates of the proposed dimensions were compared with classically defined fractal dimensions. Here, we prove the first rigorous analogue of those comparisons.

\begin{figure}
\centering  
\subfigure[]{
\label{fig:sierpinski}
\includegraphics[height=0.23\linewidth]{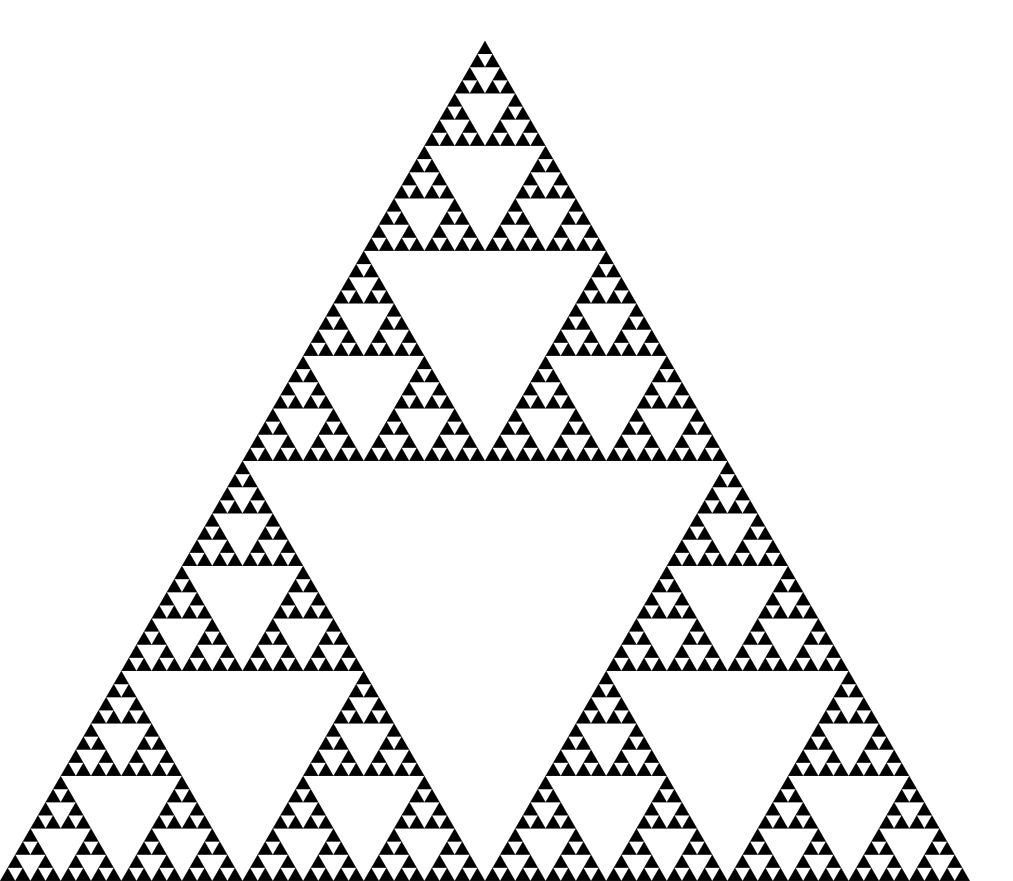}
}
\hspace{.05\linewidth}
\subfigure[]{
	\label{fig:percolation}
	\includegraphics[height=0.23\linewidth]{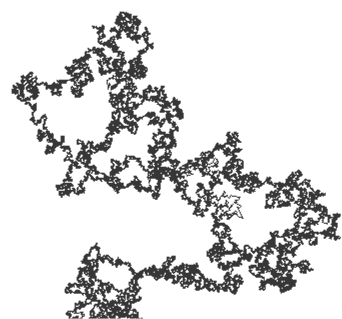}}
\hspace{.05\linewidth}
\subfigure[]{
	\label{fig:ikeda}
	\includegraphics[height=0.23\linewidth]{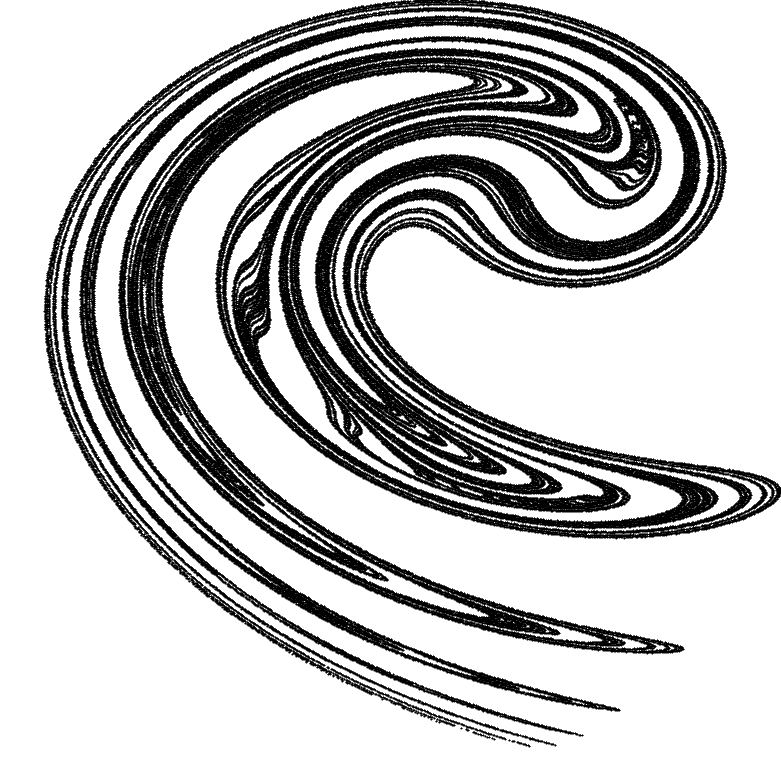}}
	\caption{\label{fig:examples}Examples which satisfy the hypotheses of our main theorem: the Sierpinski triangle ($\text{dim}_{\text{box}}=\log\paren{3}/\log\paren{2}\approx 1.585$), Schramm--Loewner Evolution with $\kappa=6$ ($\text{dim}_{\text{box}}\geq \text{dim}_{\text{Hausdorff}}=7/4$~\cite{2004beffara}), Figure provided by T. Kennedy~\cite{2009kennedy}), and the Ikeda attractor ($\text{dim}_{\text{box}}\approx 1.7$ assuming that computational estimates of the box dimension of that set accurate).}
\end{figure}

The motivation for the definition proposed here comes from the literature on minimal spanning trees. The properties of minimal spanning trees of point collections contained in a bounded subset of $\R^m$ have long been of interest~\cite{1988steele,1996kesten}. In 2005, Kozma, Lotker, and Stupp~\cite{Kozma} proved a relationship between the extremal behavior of these trees and the upper box dimension. To be precise, let $T\left(\textbf{y}\right)$ denote the  minimal spanning tree of a finite metric space $\textbf{y}$   and let
\[E_{\alpha}^0\left(\textbf{y}\right)=\frac{1}{2}\sum_{e \in T\left(Y\right)}\abs{e}^{\alpha}\,,\]
where the sum is taken over all edges $e$ in the tree $T\left(\textbf{y}\right),$ and  $\abs{e}$ denotes the length of the edge.
\begin{Definition}
 If $X$ is a bounded metric space, let $\text{dim}_{\text{MST}}\paren{X}$ be the infimal exponent $\alpha$ so that $E_\alpha^0\paren{\textbf{x}}$ is uniformly bounded for all finite point sets $\textbf{x}\subset X:$

\[\text{dim}_{\text{MST}}\paren{X}=\inf\set{\alpha : \exists\,C\text{ so that } E_\alpha^0\paren{\textbf{x}} < C\;\forall \text { finite }\;\textbf{x}\subset X}\,.\]
\end{Definition}

Kozma, Lotker, and Stupp proved that $\text{dim}_{\text{MST}}\paren{X}$ equals the upper box dimension~\cite{1928bouligand} of $X.$
\begin{Definition}
\label{defn_upperbox}
Let $X$ be a bounded metric space and let $N_\delta\paren{X}$ be the maximal number of disjoint closed $\delta$-balls with centers in $X.$ The \textbf{upper box dimension} of $X$ is
\s{\text{dim}_{\text{box}}\paren{X}=\limsup_{\delta\rightarrow 0}{\frac{\log\paren{N_\delta\paren{X}}}{\log\paren{1/\delta}}}\,.}
\end{Definition}

\begin{theorem}[Kozma, Lotker, and Stupp~\cite{Kozma}]
For any metric space $X,$
\[\text{dim}_{\text{MST}}\paren{X}=\text{dim}_{\text{box}}\paren{X}\,.\]
\end{theorem}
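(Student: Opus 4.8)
The plan is to prove both inequalities $\text{dim}_{\text{MST}}(X) \leq \text{dim}_{\text{box}}(X)$ and $\text{dim}_{\text{box}}(X) \leq \text{dim}_{\text{MST}}(X)$ separately, using the characterization of the upper box dimension in terms of $\delta$-packing numbers $N_\delta(X)$. The central tool connecting the minimal spanning tree to the packing structure will be a dyadic decomposition of scales: for each integer $k$, I would group the edges of $T(\mathbf{x})$ according to which dyadic band $[2^{-(k+1)}, 2^{-k})$ their lengths fall into, and estimate the number of edges in each band in terms of the covering/packing numbers of $X$ at the corresponding scale.

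For the upper bound $\text{dim}_{\text{MST}}(X) \leq \text{dim}_{\text{box}}(X)$, fix $\alpha > \text{dim}_{\text{box}}(X)$ and an arbitrary finite set $\mathbf{x} \subset X$. The key geometric fact about minimal spanning trees is that the endpoints of any edge of length at least $\ell$ must be $\ell$-separated from each other, and more usefully, the set of midpoints (or one endpoint per edge) of edges whose length lies in a dyadic band of size $\sim 2^{-k}$ forms an approximately $2^{-k}$-separated subset of $X$. Hence the number $m_k$ of such edges is bounded by a packing number comparable to $N_{2^{-k}}(X)$, which by the definition of upper box dimension is at most $C 2^{k(\text{dim}_{\text{box}}(X)+\varepsilon)}$ for small $\delta = 2^{-k}$. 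Summing $\sum_k m_k \cdot (2^{-k})^\alpha$ over the dyadic bands then gives a geometric series that converges to a bound uniform in $\mathbf{x}$, because $\alpha > \text{dim}_{\text{box}}(X)$; the finitely many large-scale edges contribute only a bounded amount since $X$ is bounded. This yields $E_\alpha^0(\mathbf{x}) < C$ for all finite $\mathbf{x}$, whence $\text{dim}_{\text{MST}}(X) \leq \alpha$, and letting $\alpha \downarrow \text{dim}_{\text{box}}(X)$ finishes this direction.

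For the reverse inequality $\text{dim}_{\text{box}}(X) \leq \text{dim}_{\text{MST}}(X)$, the idea is to exhibit, at each small scale $\delta$, a finite point set that forces the minimal spanning tree to contain many edges of length $\gtrsim \delta$, thereby making $E_\alpha^0$ large whenever $\alpha < \text{dim}_{\text{box}}(X)$. Concretely, take a maximal $\delta$-separated set $\mathbf{x}_\delta \subset X$ realizing the packing number $N_\delta(X)$. Any spanning tree on $N_\delta(X)$ vertices has $N_\delta(X) - 1$ edges, and because the points are $\delta$-separated every edge of $T(\mathbf{x}_\delta)$ has length at least $\delta$. Therefore $E_\alpha^0(\mathbf{x}_\delta) \geq \tfrac{1}{2}(N_\delta(X) - 1)\delta^\alpha$. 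Along a subsequence $\delta = \delta_j \to 0$ realizing the $\limsup$ in the definition of $\text{dim}_{\text{box}}(X)$, if $\alpha < \text{dim}_{\text{box}}(X)$ then $N_{\delta_j}(X)\,\delta_j^\alpha \to \infty$, so $E_\alpha^0(\mathbf{x}_{\delta_j})$ is unbounded and $\alpha$ cannot be an admissible exponent; hence $\text{dim}_{\text{MST}}(X) \geq \alpha$ for all such $\alpha$, giving the claim.

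I expect the main obstacle to be the edge-counting step in the upper bound: showing rigorously that the edges of the minimal spanning tree with length in a given dyadic band correspond to an (almost) separated set, so that their number is genuinely controlled by the packing number $N_{2^{-k}}(X)$. This requires exploiting the defining exchange/cycle property of minimal spanning trees — namely that an edge $e$ lies in $T(\mathbf{x})$ only if it is the longest edge in no cycle, equivalently that the two endpoints cannot be connected by a path using only shorter edges. The cleanest route is probably to assign to each band-$k$ edge one of its endpoints and argue, via this property together with a bounded-degree or Vitali-type covering argument in the metric space, that these chosen points cannot cluster too densely at scale $2^{-k}$; any overlap would contradict minimality by producing a shorter connecting edge. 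Handling general (possibly non-Euclidean) metric spaces here, where no bound on vertex degrees or angles is available a priori, is the delicate point, and I would expect to need a metric packing argument rather than a Euclidean geometric one.
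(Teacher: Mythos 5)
The paper itself does not prove this theorem; it quotes it from Kozma--Lotker--Stupp, so your argument must stand on its own. Your lower bound is complete and correct: a set $\textbf{x}_\delta$ realizing $N_\delta\paren{X}$ has pairwise distances greater than $2\delta$ (the paper's $N_\delta$ counts disjoint closed $\delta$-balls), any spanning tree on it has $N_\delta\paren{X}-1$ edges each of length at least $2\delta$, and along a sequence $\delta_j\rightarrow 0$ realizing the $\limsup$ one gets $N_{\delta_j}\paren{X}\,\delta_j^{\alpha}\rightarrow\infty$ for every $\alpha<\text{dim}_{\text{box}}\paren{X}$. The dyadic-band frame for the upper bound is also the right one, and after rescaling so $\text{diam}\paren{X}\leq 1$ it is exactly the geometric-series summation the paper uses in Lemma~\ref{lemma_upper_new1}.

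The genuine gap is the one you flag: you never prove that the number of band-$k$ edges is controlled by a packing number, and the route you sketch (bounded-degree or Vitali-type covering) is a detour that cannot work as stated, since a general metric space admits no degree or angle bounds. The step closes instead by a purely combinatorial cut-property argument. Fix $r>0$ and let $c_r\paren{\textbf{x}}$ be the number of components of the graph on $\textbf{x}$ joining pairs at distance at most $r$. If $e\in T\paren{\textbf{x}}$ has $\abs{e}>r$ and deleting $e$ splits the tree into vertex sets $A$ and $B$, minimality forces $d\paren{u,v}\geq\abs{e}>r$ for all $u\in A,$ $v\in B$ (otherwise swapping $e$ for $\paren{u,v}$ gives a cheaper tree); consequently every $r$-cluster lies entirely in $A$ or entirely in $B$. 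Hence the edges of $T\paren{\textbf{x}}$ of length greater than $r$ project to a forest on the set of $r$-clusters, so there are at most $c_r\paren{\textbf{x}}-1$ of them. Choosing one representative point per cluster yields a subset of $X$ with pairwise distances greater than $r$, whose closed $r/2$-balls are disjoint, so $c_r\paren{\textbf{x}}\leq N_{r/2}\paren{X}\leq C\paren{r/2}^{-\alpha}$ for any $\alpha>\text{dim}_{\text{box}}\paren{X}$. Taking $r=2^{-k-1}$ bounds your $m_k$ by $C'2^{k\alpha}$ uniformly in $\textbf{x}$, and your series converges for every exponent $\beta>\alpha$, finishing the upper bound. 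Two smaller corrections: drop the midpoint variant entirely, since midpoints need not exist in a general metric space (your parenthetical ``one endpoint per edge'' is the correct formulation, and it can even be realized literally by rooting the cluster forest and assigning to each long edge its endpoint in the child cluster); and your claim that endpoints of a single long edge are separated ``from each other'' is true but irrelevant --- the separation that matters is between distinct edges, which is exactly what the cluster count supplies.
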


If $\textbf{x}$ is a finite point set contained in a bounded metric space, and $\PH_i\left(\textbf{x}\right)$ is the $i$-dimensional persistent homology of the \v{C}ech complex of $\textbf{x},$ there is a bijection between the edges of the Euclidean minimal spanning tree of $\textbf{x}$ and the intervals in the canonical decomposition of $\PH_0\left(\textbf{x}\right),$ where the length of an interval in $\PH_0\left(\textbf{x}\right)$ is half the length of the corresponding edge. (Note that if persistent homology is taken of the Rips complex of $\textbf{x},$ then a similar correspondence holds without the requirement of an ambient space, where an interval corresponds to the edge of the same length.) This observation suggests a generalization of the previous result to higher-dimensional persistent homology. Namely, let
\[E_{\alpha}^i\left(\textbf{x}\right)=\sum_{\left(b,d\right)\in \PH_i\left(\textbf{x}\right)} \paren{d-b}^{\alpha}\]
where the sum is taken over all bounded $\PH_i$ intervals and define:
\begin{Definition}
\label{defn_main}
Let $X$ be a bounded subset of a metric space. The \textbf{$\PH_i$-dimension} of $X$ is
\s{\text{dim}_{\PH}^i\paren{X}=\inf\set{\alpha : \exists\,C\text{ so that } E_\alpha^i\paren{\textbf{x}} < C\;\forall \text{ finite }\textbf{x}\subset X}\,.}
Also, let  $\text{dim}_{\widetilde{\PH}}^i\paren{X}$ be defined by replacing the \v{C}ech complex with the Rips complex in the previous construction. 
\end{Definition}
Then 
\[\text{dim}_{\PH}^0\paren{X}=\text{dim}_{\widetilde{\PH}}^0\paren{X}=\text{dim}_{\text{MST}}\paren{X}\,.\]
Note that $\text{dim}_{\widetilde{\PH}}^i\paren{X}$ is defined for bounded metric spaces $X,$ rather than bounded subsets of a metric space.

\begin{Question}
\label{MainQuestion}
Are there hypotheses on $X$ under which $\text{dim}_{\PH}^i\paren{X}=\text{dim}_{\text{box}}\paren{X}$ or  $\text{dim}_{\widetilde{\PH}}^i\paren{X}=\text{dim}_{\text{box}}\paren{X}$ for $i>0$?
\end{Question}
Unlike the 0-dimensional case, equality will not always hold. For example, the $\PH_1$-dimension of a line in $\R^m$ will always be $0.$ Also, there are metric spaces not embeddable in any finite-dimensional Euclidean space whose $\widetilde{\PH}_1$-dimension exceeds their upper box dimension.

\begin{Proposition}
\label{prop:ripsExample} There is a metric space $X$ so that $\text{dim}_{\widetilde{\PH}_1}\paren{X}=2$ but $\text{dim}_{\text{box}}\paren{X}=1.$ \end{Proposition}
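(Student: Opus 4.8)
The plan is to build $X$ as a countable disjoint union of finite \emph{bipartite gadgets} $G_j$, one for each scale $2^{-j}$, exploiting the fact that an abstract metric need not embed in Euclidean space in order to pack many independent loops into a metrically small set. Each $G_j$ will consist of two clusters $A_j,B_j$ of $m_j=2^j$ points each, with all cross distances $d\paren{a,b}=2^{-j}$ for $a\in A_j,\,b\in B_j$ and all within-cluster distances $d\paren{a,a'}=d\paren{b,b'}=2\cdot 2^{-j}$. This is a genuine metric (the triangle inequality holds with room to spare), and the gadgets are placed along a null sequence of well-separated locations so that $X=\bigcup_j G_j$ is compact.

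The $\PH_1$ computation for a single gadget is immediate: for $r\in\paren{2^{-j},\,2\cdot 2^{-j}}$ the Rips complex of $G_j$ is exactly the complete bipartite graph $K_{m_j,m_j}$, which is triangle-free, so its clique complex is one-dimensional with $\beta_1=\paren{m_j-1}^2$; at $r=2\cdot 2^{-j}$ every pairwise distance is realized and the complex becomes a full simplex, killing all classes. Thus $G_j$ contributes $\paren{m_j-1}^2\approx 2^{2j}$ intervals, each of length exactly $2^{-j}$, so $E^1_\alpha\paren{G_j}\approx 2^{2j}\paren{2^{-j}}^\alpha=2^{\paren{2-\alpha}j}$. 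Sampling $\textbf{x}=G_j$ and letting $j\to\infty$ forces $E^1_\alpha$ to be unbounded whenever $\alpha<2$, giving $\text{dim}_{\widetilde{\PH}_1}\paren{X}\geq 2$. The same separation (namely $2^{j+1}$ points that are $2^{-j}$-separated) yields $N_{2^{-j}}\paren{X}\gtrsim 2^j$, so $\text{dim}_{\text{box}}\paren{X}\geq 1$.

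For the two matching upper bounds I would first arrange the inter-gadget placement so that, at scale $\delta=2^{-j}$, the coarser gadgets $G_k$ ($k<j$) cost $\sum_{k<j}2^{k+1}\lesssim 2^j$ balls, the gadget $G_j$ costs $2^{j+1}$ balls, and the tail $\bigcup_{k>j}G_k$ nests into $O\paren{2^j}$ balls; this gives $N_\delta\paren{X}\lesssim \delta^{-1}$ and hence $\text{dim}_{\text{box}}\paren{X}=1$. The $\PH_1$ upper bound is the crux. The key lemma to establish is that, for a space with $N_\delta\paren{X}\leq K\delta^{-1}$, every finite $\textbf{x}\subset X$ obeys $\beta_1\paren{\mathrm{Rips}_r\paren{\textbf{x}}}\lesssim r^{-2}$: covering $\textbf{x}$ by $N_{r/2}\paren{X}\lesssim r^{-1}$ balls of radius $r/2$ turns each cluster into a simplex of the scale-$r$ Rips complex, and a cluster-collapsing (nerve) argument bounds $\beta_1$ by $\binom{N_{r/2}\paren{X}}{2}\lesssim r^{-2}$.

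Granting this, I would count intervals of length $\geq\ell$ by a checkpoint argument: every such interval is alive at one of the scales $s_k=k\ell/2$, at each of which at most $\beta_1\paren{\mathrm{Rips}_{s_k}}\lesssim s_k^{-2}$ classes survive, so the number of bars of length $\geq\ell$ is $\lesssim \ell^{-2}\sum_k k^{-2}\lesssim \ell^{-2}$. A layer-cake integration then gives, for $\alpha>2$, $E^1_\alpha\paren{\textbf{x}}\lesssim \int_0^{D}\ell^{\alpha-3}\,\diff\ell<\infty$ uniformly in $\textbf{x}$, where $D=\mathrm{diam}\paren{X}$; hence $\text{dim}_{\widetilde{\PH}_1}\paren{X}\leq 2$. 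I expect the main obstacle to be the lemma $\beta_1\paren{\mathrm{Rips}_r}\lesssim N_{r/2}^2$: making the cluster-collapse rigorous (the quotient is only homotopy equivalent after a careful retraction, and one must check that $\beta_1$ does not increase) is the delicate point, and it is exactly what forces the threshold exponent to be $2$ rather than something larger.
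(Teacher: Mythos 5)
Your construction and both lower bounds are essentially the paper's: your gadgets $G_j$ are the paper's $X_n$ (complete bipartite cross distances at one scale, within-cluster distances at twice that scale), and the counts $\paren{m_j-1}^2$ intervals of length $2^{-j}$, hence $\widetilde{E}^1_\alpha\paren{G_j}\approx 2^{\paren{2-\alpha}j}$ and $\text{dim}_{\widetilde{\PH}}^1\paren{X}\geq 2$, together with $N_\delta\paren{X}\asymp\delta^{-1}$, agree with the paper (which fixes the inter-gadget distances explicitly as $d\paren{z,w}=2^{-\min\paren{i,j}}$ for $z\in X_i,w\in X_j$, a detail you leave vague but which is repairable). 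The genuine gap is in the $\widetilde{\PH}_1$ upper bound, exactly at the lemma you call the crux, and the problem is not delicacy: the lemma is false. Covering numbers at a single scale cannot control the instantaneous first Betti number of a Rips complex at that scale. Concretely, fix $r=1$ and take two clusters of $m$ points each with within-cluster distances $\epsilon=m^{-2}$, matched cross distances exactly $1$, and unmatched cross distances $1+\epsilon$; one checks this is a metric, that $N_\delta\leq 2\delta^{-1}$ for all $\delta$ with $K=2$ independent of $m$, yet $\mathcal{R}_1$ is two simplices joined by a perfect matching, so $\beta_1\paren{\mathcal{R}_1}=m-1$ is unbounded. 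The cluster-collapse fails for a structural reason: collapsing the cluster simplices is indeed a homotopy equivalence, but the quotient is not a simplicial complex on $N_{r/2}$ vertices --- it retains one $1$-cell for each inter-cluster edge, reduced only by the connected components of the bipartite graph between each pair of clusters (two cross edges sharing an endpoint are homotopic through a triangle with a cluster edge, but a matching has $m$ components). So no bound of the form $\binom{N_{r/2}}{2}$ is available, and with it your checkpoint count of bars of length $\geq\ell$ and the layer-cake integration collapse.

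Note that in the counterexample the spurious classes all die within time $\epsilon$, so your ultimate goal --- a uniform bound on $\widetilde{E}^1_\alpha$ for $\alpha>2$ --- is not contradicted; what fails is the route through single-scale ranks, since only length-weighted interval counts can be controlled by covering data. The paper's repair uses the special scale-separation of this particular $X$ rather than a general covering lemma: for $2^{-\paren{n+1}}\leq\epsilon<2^{-n}$ and any finite $\textbf{x}\subset X$, the Rips complex decomposes as a disjoint union $\mathcal{R}_{\epsilon}\paren{\textbf{x}}=\mathcal{R}_{\epsilon}\paren{\textbf{x}\cap X_n}\sqcup\paren{\text{a complex containing all simplices on the coarser gadgets, hence contractible}}\sqcup\paren{\text{a discrete set on the finer gadgets}}$, so every $\widetilde{\PH}_1$ interval of $\textbf{x}$ is an interval of a single gadget and $\widetilde{E}^1_\alpha\paren{\textbf{x}}=\sum_n\widetilde{E}^1_\alpha\paren{\textbf{x}\cap X_n}$, a convergent geometric series for $\alpha>2$. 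If you want to salvage your checkpoint argument, you must first prove the instantaneous bound $\beta_1\paren{\mathcal{R}_\epsilon\paren{\textbf{x}}}\lesssim\epsilon^{-2}$ for your specific $X$, and the only visible way to do that is this disjoint-union decomposition --- at which point you have reproduced the paper's proof and the checkpoint machinery is no longer needed.
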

We prove this in Section~\ref{RipsExample}. This behavior is related to the existence of families of point sets $\set{\textbf{x}_n}_{n\in\N}$ for which number of intervals of $\PH_i\paren{\set{\textbf{x}_n}}$ grows slower or faster than linearly in in $\abs{\textbf{x}_n}$ when $i>0$; by contrast, the reduced $0$-dimensional persistent homology of a finite metric space with $k$ points always has $k-1$ intervals.

We restrict our attention to subsets of Euclidean space, and conjecture

\begin{conj}
\label{main_conjecture}
For any $m\in\N$ and $0\leq i < m,$ there is a constant $\gamma_i^m< m$ so that if $X\subset\mathbb{R}^m$ and $\text{dim}_{\text{box}}\left(X\right)>\gamma_i^m$ then
\s{\text{dim}_{\PH}^i\paren{X}= \text{dim}_{\text{box}}\paren{X}\,.}
\end{conj}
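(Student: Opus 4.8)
The plan is to prove the two inequalities $\text{dim}_{\PH}^i(X) \le \text{dim}_{\text{box}}(X)$ and $\text{dim}_{\PH}^i(X) \ge \text{dim}_{\text{box}}(X)$ separately, with only the second requiring the hypothesis $\text{dim}_{\text{box}}(X) > \gamma_i^m$. The upper bound I expect to hold unconditionally, by the same dyadic-scale bookkeeping that underlies the $0$-dimensional (minimal spanning tree) case. The lower bound is the substantive content and is where the dimension threshold enters: one must manufacture point configurations in $X$ carrying many long $\PH_i$ intervals, and this is only possible once $X$ is topologically rich at small scales, which the condition $\text{dim}_{\text{box}} > \gamma_i^m$ is meant to guarantee.

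For the upper bound, fix $\alpha > \text{dim}_{\text{box}}(X)$ and choose $d'$ with $\text{dim}_{\text{box}}(X) < d' < \alpha$, so that $N_\delta(X) \le \delta^{-d'}$ for all sufficiently small $\delta$. The key lemma I would establish is a \emph{counting bound}: for any finite $\mathbf{x} \subset X \subset \R^m$, the number of $\PH_i$ intervals of length at least $\ell$ is $O\paren{N_{c\ell}(X)}$, with $c$ and the implied constant depending only on $m$ and $i$ (this is the extremal interval-counting question flagged in the abstract). Granting this, I split the total persistence over dyadic length scales,
\[ E_\alpha^i(\mathbf{x}) = \sum_{k} \sum_{2^{-k-1} < d-b \le 2^{-k}} \paren{d-b}^\alpha \le \sum_k 2^{-k\alpha}\, \#\set{\text{intervals of length} > 2^{-k-1}}, \]
and bound the inner count by $O\paren{N_{c2^{-k-1}}(X)} = O\paren{2^{kd'}}$; the resulting geometric series $\sum_k 2^{-k(\alpha - d')}$ converges uniformly in $\mathbf{x}$ because $\alpha > d'$. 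The counting lemma itself I would prove by controlling persistent Betti numbers: an interval of length $\ge \ell$ contributes to $\beta_i^{\,r,\,r+\ell}$ of the \v{C}ech complex for some $r$, and a Mayer--Vietoris / nerve argument over a cover of $X$ by $O\paren{N_{c\ell}(X)}$ sets of diameter $\sim \ell$ charges each such persistent class to a distinct cover element up to a multiplicity bounded in terms of $m$ and $i$. Controlling the overlap combinatorics so that the bound stays \emph{linear} in $N_{c\ell}$ (anything superlinear would spoil the dimension count) is the delicate point.

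For the lower bound, fix $\alpha < \text{dim}_{\text{box}}(X) =: d$. Since the box dimension is a $\limsup$, there is a sequence $\delta_n \to 0$ and a $d' \in (\alpha, d)$ with $N_{\delta_n}(X) \ge \delta_n^{-d'}$; selecting one point of $X$ from each disjoint ball yields a $\delta_n$-separated set $P_n \subset X$ of cardinality $\gtrsim \delta_n^{-d'}$ lying in a bounded region of $\R^m$. The heart of the argument is a \emph{topological richness} lemma: a $\delta$-separated set of $\gtrsim \delta^{-d'}$ points in a bounded region of $\R^m$ with $d' > \gamma_i^m$ must produce $\gtrsim \delta^{-\beta}$ many $\PH_i$ intervals of length $\gtrsim \delta$, with $\beta$ as close to $d'$ as desired. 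Given this, $E_\alpha^i(P_n) \gtrsim \delta_n^{-\beta}\cdot \delta_n^{\alpha} = \delta_n^{\alpha - \beta} \to \infty$ once $\beta > \alpha$, which is arrangeable because $\alpha < d'$; hence $\text{dim}_{\PH}^i(X) \ge \alpha$ for every $\alpha < d$.

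The main obstacle is precisely this richness lemma, together with the requirement that the optimal threshold satisfy $\gamma_i^m < m$. The difficulty is that metric density alone does not force topology: the points of $P_n$ could a priori be arranged to suppress $i$-dimensional holes, so one must show that sufficiently many $\delta$-separated points in bounded volume \emph{cannot} avoid creating holes at intermediate scales. In the codimension-one case --- in particular $\R^2$ with $i=1$, giving $\gamma_1^2 = 3/2$ --- I would attack this through the Euler characteristic: for planar point sets the \v{C}ech complex is homotopy equivalent to a planar set, so $\beta_1 = \beta_0 - \chi = \beta_0 - V + E - F$, and $\delta$-separation bounds $V$, $E$, and $F$ at a chosen intermediate scale while the hypothesis $d' > 3/2$ forces the surplus $E - V - F$ to be positive of the required order $\gtrsim \delta^{-\beta}$, yielding $\beta_1$ of that order; quantitative interleaving stability then guarantees these classes persist over length $\gtrsim \delta$. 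For general $(m,i)$ one would need an Alexander-duality or spectral-sequence substitute for this Euler-characteristic computation, and identifying the sharp $\gamma_i^m$ --- or even establishing $\gamma_i^m < m$ in every codimension --- is the genuinely open part, which is why the statement is framed as a conjecture rather than a theorem.
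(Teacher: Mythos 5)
This statement is Conjecture~\ref{main_conjecture}: the paper does not prove it, and your proposal rightly ends by conceding that the key ``richness'' input is open. Measured against what the paper actually establishes, the skeleton of your lower-bound plan is the correct one --- it is precisely the paper's machinery of stable classes and the extremal threshold $\xi_i^m(N)$, $\gamma_i^m$ (Definitions~\ref{defn_stable} and~\ref{defn_nontriviality}, Proposition~\ref{prop:kappaprop}). But your upper-bound half contains a genuine gap: you assert that $\text{dim}_{\PH}^i(X)\leq\text{dim}_{\text{box}}(X)$ holds unconditionally via a counting lemma $\abs{I_{i,\ell}\paren{\textbf{x}}}=O\paren{N_{c\ell}\paren{X}}$, and this lemma is itself an open extremal problem for $m\geq 3$, not something a nerve/Mayer--Vietoris argument delivers. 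Concretely: a class of persistence $\ell$ can be born at a scale $r\gg\ell$, where a cover of $X$ by sets of diameter $\sim\ell$ says nothing about the topology of the union of $r$-balls (this is why Cohen-Steiner et al.\ must triangulate the \emph{ambient} space and obtain only $O\paren{\epsilon^{-m}}$, i.e.\ Corollary~\ref{corollary_upper_2}). By bottleneck stability (Corollary~\ref{stableCorollary}) your lemma is equivalent to bounding the number of long intervals of a well-spaced $n$-point set linearly in $n$ --- exactly the question the paper flags as open in Section~\ref{sec:Arcs}, where Edelsbrunner's two-arcs example in $\R^3$ has $\approx n^{1.5}$ intervals in total, so any bounded-multiplicity charging scheme must exploit interval length in a way your sketch does not. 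The paper obtains the sharp upper bound only in the plane, where the alpha complex is a subcomplex of a planar Delaunay triangulation with $O(n)$ simplices (Corollary~\ref{corollary_upper} with $\min\paren{i+1,\floor{\frac{m+1}{2}}}=1$); this is the precise reason Theorem~\ref{mainTheorem} is restricted to $\R^2$, and Theorem~\ref{thmLower} retains only the trivial upper bound $m$.

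On the lower bound, two further points would break your argument as written. First, since the upper box dimension is a $\limsup$, you cannot pass directly from $N_{\delta_n}(X)\geq\delta_n^{-d'}$ at a sequence of scales to the density hypothesis of a richness lemma at a \emph{fixed} mesoscale: the paper's Lemma~\ref{lemma:insidebox} must let the cube ratio $k_j\rightarrow\infty$ slowly as $\delta_j\rightarrow 0$, and after extracting one class per dense mesocube one must thin the cubes to pairwise distance $2k_j\delta_j\sqrt{m}$ so the local \v{C}ech complexes stay disjoint past the death times of the classes --- your sketch addresses neither. Second, your Euler-characteristic route in $\R^2$ does not produce long bars: density and $\delta$-separation alone do not force the surplus $E-V-F$ to be positive (a planar alpha complex always satisfies $E\leq 3V-6$, and the needed lower bound on $E$ relative to $F$ at a usable scale is exactly the unproven content), and even granting $\beta_1(r)\gtrsim\delta^{-\beta}$ at one scale, large instantaneous Betti number does not yield bars of length $\gtrsim\delta$; stability compares different point sets, not different scales of one filtration, so the appeal to ``quantitative interleaving'' is circular --- you would need to lower-bound the rank of $H_1(r)\rightarrow H_1(r+c\delta)$, which again requires the open extremal control of short bars (compare Proposition~\ref{prop_upper1}). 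The paper instead proves $\xi_1^m(N)=O\paren{N^{m-1/2}}$, hence $\gamma_1^m\leq m-1/2$, by explicit combinatorial geometry (Proposition~\ref{triangleLemma}): a pigeonhole over rows and columns of a dense planar slice produces a quadrilateral with a non-obtuse vertex angle, hence an acute triangle whose total persistence --- circumradius minus half the longest edge --- exceeds $\sqrt{m}+1$, giving a stable class via Proposition~\ref{prop:stable} that survives arbitrary placement within unit cubes. Nothing comparable is known for $i\geq 2$, nor for thresholds below $m-1/2$, which is why the statement is a conjecture; your proposal correctly locates that difficulty but incorrectly treats the upper bound as routine.
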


Cohen-Steiner, Edelsbrunner, Harer, and Mileyko studied a quantity similar to $E_{\alpha}^i$ in their paper~\cite{2010CohenSteiner}. Their results immediately imply that if $X\subset\R^m,$ then $\dim_{\PH}^i\paren{X}\leq m$ for all $i\in \N$ (Corollary~\ref{corollary_upper_2}). If $X$ is a subset of $\R^m$ with non-empty interior the corresponding lower bound is easy to show and $\dim_{\PH}^i\paren{X}=m$ for $i=1,\ldots,m-1$ ( Proposition~\ref{prop:interior}). However, our focus here will be to prove results about subsets of fractional box dimension. This task is challenging, and involves difficult combinatorial problems. Our main result is:
 
\begin{theorem}
\label{mainTheorem}
Let $X$ be a bounded subset of $\R^2.$ If $\text{dim}_{\text{box}}\left(X\right)>1.5,$ then 

\s{\text{dim}_{\PH}^1\paren{X}= \text{dim}_{\text{box}}\paren{X}\,.}
\end{theorem}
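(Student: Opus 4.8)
My plan is to establish the two inequalities $\text{dim}_{\PH}^1\paren{X}\le\text{dim}_{\text{box}}\paren{X}$ and $\text{dim}_{\PH}^1\paren{X}\ge\text{dim}_{\text{box}}\paren{X}$ separately, writing $D=\text{dim}_{\text{box}}\paren{X}$ throughout. Since Corollary~\ref{corollary_upper_2} only yields $\text{dim}_{\PH}^1\paren{X}\le 2$, the upper inequality already requires a scale-sensitive sharpening; the hypothesis $D>1.5$, however, will be needed only for the lower inequality.

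\textbf{Upper bound.} Fix a finite $\textbf{x}\subset X$ and group its bounded $\PH_1$ intervals by the dyadic scale of their persistence $d-b$. It suffices to show that for every $\ell>0$ the number of intervals with $d-b\ge\ell$ is at most $c\,N_{c\ell}\paren{X}$ for absolute constants $c$; summing $\paren{\#\text{ intervals at scale }\ell}\cdot\ell^\alpha$ over dyadic $\ell$ then gives a convergent geometric series for every $\alpha>D$, hence a uniform bound on $E_\alpha^1\paren{\textbf{x}}$ and so $\text{dim}_{\PH}^1\paren{X}\le D$. An interval with $d-b\ge\ell$ is alive across a range of radii of length $\ell$, so it registers as a bounded complementary component of $\bigcup_{x\in\textbf{x}}B\paren{x,r}$ for some $r\ge\ell/2$. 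The estimate I would prove is that the first Betti number of such a union is at most $c\,N_{cr}\paren{X}$: by the nerve lemma its homotopy type is that of the \v{C}ech complex, clustered points only subdivide cells, and passing to an $r$-separated subnet of cardinality at most $N_{cr}\paren{X}$ bounds the number of independent $1$-cycles through an Euler-characteristic count for the bounded-degree planar arrangement of the subnet balls.

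\textbf{Lower bound.} This is where the real difficulty lies. I would use the box dimension to produce, along a sequence $\delta\to 0$, a maximal $\delta$-separated subset $P\subset X$ with $\abs{P}=N_\delta\paren{X}\ge\delta^{-s}$ for any fixed $s<D$, and then try to select from $P$, possibly together with additional points of $X$ at finer scales, a configuration whose $\delta$-thickening has $\gtrsim\abs{P}$ bounded complementary components, each of persistence $\gtrsim\delta$. If this succeeds then $E_s^1\gtrsim N_\delta\paren{X}\cdot\delta^s\to\infty$, forcing $\text{dim}_{\PH}^1\paren{X}\ge s$ and, letting $s\uparrow D$, the desired inequality; equivalently, via Kozma--Lotker--Stupp one may start from the abundance of long minimal-spanning-tree edges guaranteed by $\text{dim}_{\text{MST}}\paren{X}=D$ and attempt to close them into empty loops.

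The obstruction, and the reason a hypothesis on $D$ is unavoidable, is that the points of $P$ might lie along filaments enclosing almost no area, so that no subset produces many persistent loops; this is exactly what happens for a nearly one-dimensional set, for which $\text{dim}_{\PH}^1$ is strictly smaller than $D$. The content of the theorem is that $D>1.5$ forbids this, forcing a positive proportion of a $\delta$-net to bound persistent holes. I expect the crux to be a purely extremal statement of the kind alluded to in the introduction, namely a sharp lower bound on the number of persistent $\PH_1$ intervals forced by $n$ separated points in $\R^2$, and I expect the exponent $3/2$ to emerge from balancing the two competing scales defining persistence: the small birth scale, controlling how tightly an enclosing cycle must be sampled, against the larger death scale, controlling the radius of the enclosed hole. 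Making this extremal estimate precise, and verifying that $3/2$ is the exact threshold at which the forcing begins, is the main obstacle.
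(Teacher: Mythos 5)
Your upper bound is, in essence, the paper's own route: pass to a maximal separated net via bottleneck stability, prove that a net of $n$ points in the plane has only $O\paren{n}$ intervals of the relevant length, and sum dyadically over persistence scales (this last step is exactly Lemma~\ref{lemma_upper_new1}). The paper gets the linear count from the Upper Bound Theorem for Delaunay triangulations via the Alpha complex (Proposition~\ref{prop1} and Corollary~\ref{corollary_upper}, where $\min\paren{i+1,\floor{\frac{m+1}{2}}}=1$ for $m=2,$ $i=1$); your planar union-of-disks Euler-characteristic count is a workable substitute in the plane. One repair is needed in your reduction: the number of intervals with $d-b\geq\ell$ is \emph{not} bounded by the first Betti number at a single radius, since such intervals are alive at disjoint ranges of radii (compare $\paren{0,\ell}$ and $\paren{10\ell,11\ell}$). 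You must either bound the \emph{total} interval count of the net — as the paper does, since each interval is born with a distinct Delaunay simplex — or sum your per-radius bound over the grid $r_k=k\ell/2,$ which converges only because $\alpha>1$; either patch works here, so this is a fixable, not fatal, gap.

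The lower bound, however, is genuinely missing, and you say so yourself: ``making this extremal estimate precise \dots is the main obstacle.'' What the paper actually supplies, and your proposal does not, is threefold. First, a perturbation-robust notion of forced homology: a \textbf{stable} $\PH_1$-class (Definition~\ref{defn_stable}), where by bottleneck stability an interval of length $>\sqrt{m}+\epsilon$ for a set of grid centers survives arbitrary placement of one point per unit cube (Proposition~\ref{prop:stable}); this is what lets one pass from combinatorics on $\Z^2$ to arbitrary points of $X$ in occupied boxes. Second, the extremal theorem itself (Proposition~\ref{triangleLemma}): any subset of $\brac{N}^2$ of size exceeding $8cN^{3/2}+4N$ contains a point $y$ with at least $c\sqrt{N}$ points of the set on each side of it in its row and in its column; of the four witnesses $q_1,\ldots,q_4$ one vertex angle is non-obtuse, and sliding that vertex a further $c\sqrt{N}$ cells along its row produces an \emph{acute} triangle whose total persistence — circumradius minus half the longest edge — is bounded below by an explicit constant of order $c^2,$ exceeding $1+\sqrt{m}$ (the calculus is done in the appendix). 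Note the exponent $3/2$ thus emerges from a row/column pigeonhole, not from the birth-scale/death-scale balance you conjecture, and nothing in the paper shows $3/2$ is the true threshold (improvement is expected; see Conjecture~\ref{main_conjecture} and the conclusion). Third, a multiscale selection argument (Lemma~\ref{lemma:insidebox} and Proposition~\ref{prop:kappaprop}): because the upper box dimension gives only $\limsup$ control of $N_\delta\paren{X},$ one cannot fix the ratio between the coarse scale $k\delta$ and the fine scale $\delta$; the paper takes $k_j=\floor{\delta_j^{-p}}\rightarrow\infty,$ finds $\gtrsim\delta_j^{pm-\beta+\epsilon}$ coarse cubes each containing more than $k_j^{\lambda}$ occupied fine cubes with $\lambda>\gamma_1^2,$ and then thins to cubes separated by $2k_j\delta_j\sqrt{m}$ so that each contributes a distinct interval of length $\geq\delta_j$ to $\PH_1\paren{\textbf{z}_j}.$ In particular the correct picture is one stable interval per selected coarse cube, not ``a positive proportion of a $\delta$-net bounding persistent holes.'' Without the stable-class mechanism and the acute-triangle extremal bound, the inequality $\text{dim}_{\PH}^1\paren{X}\geq\text{dim}_{\text{box}}\paren{X}$ — the entire content of the hypothesis $\text{dim}_{\text{box}}\paren{X}>1.5$ — remains unproven in your proposal.
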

See Figure~\ref{fig:examples} for images of two examples known to meet these hypotheses, and one that is believed to based on computational experiments. The upper bound $\text{dim}_{\PH}^1\paren{X} \leq \text{dim}_{\text{box}}\paren{X}$ is proven in Section~\ref{sec_upper_bound}, and the lower bound in Section~\ref{sec_lower_bound}.

We also prove partial results in more general cases:
\begin{theorem}
\label{thmLower}
Let $X$ be a bounded subset of $\R^m.$ If $\text{dim}_{\text{box}}\left(X\right)>m-1/2,$ then 
\s{\text{dim}_{\text{box}}\paren{X}\leq \text{dim}_{\PH}^1\paren{X}\leq m\,.}
\end{theorem}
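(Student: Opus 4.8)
The plan is to prove two separate bounds for Theorem~\ref{thmLower}: the lower bound $\text{dim}_{\text{box}}\paren{X}\leq \text{dim}_{\PH}^1\paren{X}$ under the hypothesis $\text{dim}_{\text{box}}\paren{X}>m-1/2$, and the upper bound $\text{dim}_{\PH}^1\paren{X}\leq m$ which holds in full generality (and is already promised as Corollary~\ref{corollary_upper_2} following from the work of Cohen-Steiner, Edelsbrunner, Harer, and Mileyko). So the real content is the lower bound, and I would concentrate entirely there.

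\textbf{Strategy for the lower bound.} The goal is to show that if $\alpha < \text{dim}_{\text{box}}\paren{X} =: d$, then $E_\alpha^1\paren{\textbf{x}}$ is unbounded over finite $\textbf{x}\subset X$, forcing $\text{dim}_{\PH}^1\paren{X}\geq d$. The natural approach is a multiscale construction: I would fix a small scale $\delta$ and use Definition~\ref{defn_upperbox} to extract, along a subsequence $\delta_k\to 0$, roughly $N_{\delta_k}\paren{X}\approx \delta_k^{-d+\epsilon}$ disjoint $\delta_k$-balls with centers in $X$. The heart of the matter is to arrange points in $\textbf{x}$ so that these many disjoint balls each force the creation of a $\PH_1$ interval of length comparable to $\delta_k$. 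If I can produce on the order of $\delta_k^{-d+\epsilon}$ intervals each of length $\gtrsim\delta_k$, then
\[
E_\alpha^1\paren{\textbf{x}}\gtrsim \delta_k^{-d+\epsilon}\cdot \delta_k^{\alpha}=\delta_k^{\alpha-d+\epsilon}\,,
\]
which diverges as $\delta_k\to 0$ whenever $\alpha<d-\epsilon$; letting $\epsilon\to 0$ gives the claim. The role of the hypothesis $d>m-1/2$ should be to guarantee that enough of these local balls can be made to carry a one-cycle: in $\R^m$ a single point or small cluster does not generate $\PH_1$, so one needs the box-counting density to be high enough that the points cannot be distributed in a way that avoids creating loops, and the threshold $m-1/2$ is what makes the counting argument close.

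\textbf{Key steps, in order.} First, I would reduce to a clean statement: it suffices to exhibit, for arbitrarily small $\delta$, a finite point set $\textbf{x}_\delta\subset X$ with at least $c\,\delta^{-d+\epsilon}$ intervals of $\PH_1$ each of length at least $c'\delta$. Second, I would pass to the disjoint-ball packing guaranteed by the upper box dimension and select centers forming a well-separated net. Third, and this is the crux, I would build loops locally: within or among neighboring balls I must place points of $X$ so that the \v{C}ech complex develops a nontrivial $1$-cycle persisting across a length scale proportional to $\delta$. The cleanest way to control persistence here is to use that a $\PH_1$ interval born at scale $\approx\delta$ and dying only at scale $\gtrsim\delta$ contributes $\paren{d-b}^\alpha\gtrsim\delta^\alpha$. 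Fourth, I would sum these contributions over the packing and verify the divergence computation above, then take the infimum over $\alpha<d$.

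\textbf{The main obstacle.} The difficult step is the third one: guaranteeing that a large number of the disjoint balls actually host persistent one-cycles rather than merely contributing to $\PH_0$. Unlike the minimal-spanning-tree (i.e. $\PH_0$) case, where every point beyond the first automatically contributes an interval, there is no a~priori lower bound on the number of $\PH_1$ intervals generated by $n$ points, as the paper emphasizes and as Proposition~\ref{prop:ripsExample} illustrates. The hypothesis $\text{dim}_{\text{box}}\paren{X}>m-1/2$ must be used precisely to force loop formation: intuitively, when the box dimension is this large the set is dense enough at scale $\delta$ that the points in adjacent balls cannot be separated by a codimension-one ``sheet'' without that sheet itself being pierced by enough of $X$ to close up cycles. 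I expect the rigorous version of this to be a packing/covering counting argument comparing the number of $\delta$-balls ($\approx\delta^{-d}$) against the number of $\delta$-balls that could lie on a separating hypersurface ($\approx\delta^{-(m-1)}$), so that the surplus $\delta^{-d}-\delta^{-(m-1)}$ of balls off the separator is forced to generate independent one-cycles; this surplus is positive and polynomially large exactly when $d>m-1$, and the sharper threshold $d>m-1/2$ should emerge from making the births and deaths of these cycles quantitatively separated rather than merely nonzero.
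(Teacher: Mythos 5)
Your framing is right---the upper bound is exactly Corollary~\ref{corollary_upper_2}, the content is the lower bound, and the final divergence computation $E_\alpha^1\gtrsim \delta^{\alpha-d+\epsilon}\rightarrow\infty$ is the same one the paper runs---but the step you yourself flag as the crux is missing, and the mechanism you guess at is not the one that works. First, your single-scale setup fails at the start: a maximal packing of disjoint $\delta$-balls centered in $X$ gives you $\approx\delta^{-d}$ \emph{points}, but an individual $\delta$-ball with center in $X$ need not contain any one-cycle at all ($X$ could be locally a single point at that scale), so there is nothing to make ``each ball force a $\PH_1$ interval of length $\gtrsim\delta$.'' The paper instead works at two scales: it tiles by cubes of width $\delta$ and $k\delta$ and counts the big cubes containing more than $k^\lambda$ occupied small cubes (the collection $\mathcal{D}^\lambda_{\delta,k}$), letting $k\rightarrow\infty$ slowly as $\delta\rightarrow 0$ precisely because the $\limsup$ in Definition~\ref{defn_upperbox} gives only weak control on $N_\delta$ along subsequences (Lemma~\ref{lemma:insidebox}). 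Loops are then forced \emph{inside} each such big cube, not in individual balls, and the selected cubes are thinned to be $2k\delta\sqrt{m}$-separated so the local intervals survive in $\PH_1$ of the union.

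Second, the loop-forcing itself is a combinatorial extremal statement about the integer grid, not a separating-hypersurface argument. The paper defines a \emph{stable} $\PH_1$-class (Definition~\ref{defn_stable}): a set of grid cells such that any choice of one point per cell produces a long interval, with robustness supplied by bottleneck stability (Proposition~\ref{prop:stable}); it then studies $\xi_1^m\paren{N}$, the largest subset of $\brac{N}^m$ avoiding such configurations, and proves $\xi_1^m\paren{N}=O\paren{N^{m-1/2}}$ (Proposition~\ref{triangleLemma}) by finding three points forming an acute triangle whose circumradius minus half its longest edge exceeds $1+\sqrt{m}$. The exponent $m-1/2$ arises from a budget of $O\paren{\sqrt{N}}$ points per row and column in a two-dimensional slice of density at least that of $X$ in $\brac{N}^m$, together with the explicit minimization of the total-persistence functions $TP_1,TP_2$ in the appendix. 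Your proposed count---$\delta^{-d}$ balls versus $\delta^{-(m-1)}$ balls on a separating sheet---would, as you note, only close when $d>m-1$, and your suggestion that ``quantitative separation of births and deaths'' sharpens $m-1$ to $m-1/2$ points in the wrong direction: in the paper the threshold comes from the acute-triangle counting, and nothing in your sketch produces cycles whose birth--death gap is bounded below by a fixed multiple of $\delta$ uniformly over all admissible point placements, which is what the stable-class notion is engineered to guarantee. So the proposal correctly locates the difficulty but does not contain the idea that resolves it.
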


Furthermore, we show that the example in Proposition~\ref{prop:ripsExample} cannot be taken to be a subset of $\mathbb{R}^m.$
\begin{theorem}
\label{ripsUpper}
\label{ripsTheorem}
If $X$ is a bounded subset of $\R^m$ then 
\s{\text{dim}_{\widetilde{\PH}}^1\paren{X}\leq\text{dim}_{\text{box}}\paren{X}\,.}
\end{theorem}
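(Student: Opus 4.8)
The plan is to prove the statement directly: fix any exponent $\alpha>\text{dim}_{\text{box}}(X)$ and show that $E_\alpha^1(\mathbf{x})$ is bounded by a constant independent of the finite set $\mathbf{x}\subset X$; since $\alpha$ was an arbitrary exponent exceeding the box dimension, this yields $\text{dim}_{\widetilde{\PH}}^1(X)\le\text{dim}_{\text{box}}(X)$. First I would choose an auxiliary exponent $\alpha'$ with $\text{dim}_{\text{box}}(X)<\alpha'<\alpha$, so that by Definition~\ref{defn_upperbox} there is a constant $C$ with $N_\delta(X)\le C\delta^{-\alpha'}$ for all small $\delta$. I would then organize the bounded $\PH_1$ intervals of the Rips complex by a geometric (triadic) scale $\sigma_k=3^{-k}$ and split them into \emph{multiplicatively long} intervals, those with $d\ge 9b$, and \emph{multiplicatively short} intervals, those with $d<9b$. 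The long intervals will be controlled by persistent Betti numbers across an octave, while the short ones are confined to a bounded multiplicative window around their death scale.

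The two geometric inputs I would establish first are the following. (i) \textbf{A net estimate.} If $L\subset\mathbf{x}$ is a maximal $t$-separated subset then $\abs{L}\le N_{t/2}(X)$, every point of $\mathbf{x}$ lies within $t$ of $L$, and the nearest-point map gives a simplicial map $\text{Rips}_t(\mathbf{x})\to\text{Rips}_{3t}(L)$ whose composition with the inclusion $\text{Rips}_{3t}(L)\hookrightarrow\text{Rips}_{3t}(\mathbf{x})$ is the canonical inclusion. Because $L$ is $t$-separated, a packing argument bounds the degree of $\text{Rips}_{3t}(L)$ by a constant $\Delta_m$ depending only on $m$, so $\beta_1\left(\text{Rips}_{3t}(L)\right)\le\tfrac12\Delta_m\abs{L}$. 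Combining these, the rank of $H_1\left(\text{Rips}_t(\mathbf{x})\right)\to H_1\left(\text{Rips}_{3t}(\mathbf{x})\right)$ is at most $C_m N_{t/2}(X)$; that is, the persistent Betti number $\beta_1^{t,3t}(\mathbf{x})$ obeys a linear bound in the covering number at scale $\sim t$. (ii) \textbf{A filling estimate.} In $\R^m$ any $1$-cycle of $\text{Rips}_s(\mathbf{x})$ of diameter $D$ bounds in $\text{Rips}_D(\mathbf{x})$, by coning it to one of its own vertices; consequently every bounded $\PH_1$ interval $(b,d)$ has the property that each of its cycle representatives has diameter at least $d$. This is the Euclidean feature that forces persisting cycles to be geometrically large, and it is what ultimately prevents the behavior of Proposition~\ref{prop:ripsExample}.

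For the multiplicatively long intervals I would charge each interval to the octaves it spans. An interval with $d\ge 9b$ is counted in $\beta_1^{\sigma_k,\sigma_{k-1}}$ for every $k$ with $b\le\sigma_k<d/3$, and since $d-b\le d$ while the largest such $\sigma_{k-1}$ is comparable to $d$, summing the octave weights reconstitutes $(d-b)^\alpha$ up to a multiplicative constant. Interchanging the order of summation and inserting the net estimate (i) gives
\[
\sum_{\text{long }(b,d)}(d-b)^\alpha\le C_\alpha\sum_k \sigma_{k-1}^{\,\alpha}\,\beta_1^{\sigma_k,\sigma_{k-1}}(\mathbf{x})\le C_\alpha C_m\sum_{k\ge k_0}\sigma_k^{\,\alpha}N_{\sigma_k/2}(X)\le C'\sum_{k\ge k_0}3^{-k(\alpha-\alpha')},
\]
which converges because $\alpha>\alpha'$. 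The key point is that weighting each octave by $\sigma_k^{\alpha}$ makes the geometric sum dominated by the death scale, so the unavoidable overcounting of a long interval across the octaves it spans costs only a constant factor.

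The main obstacle is the multiplicatively short intervals. Charging each such interval to its death octave $j$ (so $d\in[\sigma_j,\sigma_{j-1})$), the condition $d<9b$ forces the birth to satisfy $b>\sigma_{j+2}$, so the interval is born and dies inside a bounded multiplicative window, and its contribution is at most $C\sigma_j^{\alpha}$. It therefore suffices to show that the number $w_j$ of short intervals dying in octave $j$ satisfies $w_j\le C_m N_{c\sigma_j}(X)$; the sum $\sum_j\sigma_j^\alpha w_j$ then converges exactly as above. This count is the genuinely hard combinatorial step: it cannot be obtained from an interleaving with the net, because these intervals are shorter than the interleaving parameter and hence may go unmatched in the bottleneck comparison, and it is essentially equivalent to the single-scale bound $\beta_1\left(\text{Rips}_t(\mathbf{x})\right)\le C_m N_{ct}(X)$. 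I would attack it by combining the filling estimate (ii)—which shows each short interval corresponds to a cycle of diameter at least $\sigma_j$—with a packing and Euler-characteristic argument on the nerve of the balls of radius $\sim\sigma_j$ (sharpest in the plane via Alexander duality, where the holes of the union of balls are the bounded complementary components and are therefore pinned to $\sim\sigma_j$-separated centers), to conclude that independent short cycles require essentially disjoint room of size $\sim\sigma_j$ and so number at most $C_m N_{c\sigma_j}(X)$. Establishing this linear bound on short cycles in $\R^m$ is the crux on which the theorem rests.
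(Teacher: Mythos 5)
Your treatment of the multiplicatively long intervals is correct: the contiguity of the nearest-point projection onto a maximal $t$-separated net, the degree bound on the net's Rips graph, and the octave-weighted summation all work, and they give a legitimate bound on the persistent ranks $H_1\paren{\mathcal{R}_t\paren{\textbf{x}}}\rightarrow H_1\paren{\mathcal{R}_{3t}\paren{\textbf{x}}}$. The gap is in the short-interval step, and it is worse than an unproven lemma: the bound you reduce to, $w_j\leq C_m N_{c\sigma_j}\paren{X}$ (equivalently your single-scale estimate $\beta_1\paren{\mathcal{R}_t\paren{\textbf{x}}}\leq C_m N_{ct}\paren{X}$), is \emph{false}, already in $\R^2$. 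Take $X$ to be the unit square, fix $t$, let $\delta=t/\paren{2k},$ and let $\textbf{x}$ consist of the "ladder" $u_i=\paren{i\delta,0}$ and $v_i=\paren{i\delta,t}$ for $i=1,\ldots,k.$ For $\epsilon\in\left[t,\sqrt{t^2+\delta^2}\right)$ the mixed edges are exactly the rungs $\paren{u_i,v_i}$, each side is a clique, and no triangle has vertices on both sides, so $\mathcal{R}_{\epsilon}\paren{\textbf{x}}$ is homotopy equivalent to a wedge of $k-1$ circles; at $\epsilon=\sqrt{t^2+\delta^2}$ the diagonals $\paren{u_i,v_{i+1}}$ enter, the quads bound, and $H_1$ vanishes. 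Thus $\widetilde{\PH}_1\paren{\textbf{x}}$ has $k-1$ intervals, all equal to $\paren{t,\sqrt{t^2+\delta^2}}$: multiplicatively short, all dying in the octave of $t$, with $k$ arbitrary while $N_{ct}\paren{X}$ is a fixed constant. Your charging scheme assigns each of them cost $C\sigma_j^{\alpha}\approx Ct^\alpha$, so the charged sum diverges as $k\rightarrow\infty$, even though the true contribution $\paren{k-1}\paren{\sqrt{t^2+\delta^2}-t}^{\alpha}\approx k\paren{t/8k^2}^{\alpha}$ tends to $0.$ The failure is structural: replacing $\paren{d-b}^\alpha$ by the death scale discards exactly the information that keeps $E^1_\alpha$ finite, and no covering-number bound on the raw count of short intervals can exist --- that count genuinely scales with $\abs{\textbf{x}}$, not with $N_{c\sigma_j}\paren{X}$.

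This also explains the divergence from the paper's route. The paper never buckets intervals by death scale; it thresholds by \emph{length}: Theorem~\ref{theorem_rips_count} shows the total number of $\widetilde{\PH}_1$ intervals of an $n$-point set in $\R^m$ is $O\paren{n}$, proved by induction on vertex removal via a Mayer--Vietoris sequence for the star and link, with Lemma~\ref{lemma_kissing} bounding the $\PH_0$ of the link filtration by the kissing number $K_m.$ Feeding this into Proposition~\ref{prop1} with $\lambda=1$ (pass to a maximal $\epsilon/4$-separated net, use bottleneck stability as in Corollary~\ref{stableCorollary} so that intervals of length $\geq\epsilon$ of an arbitrary $\textbf{x}$ inject into the intervals of a subset of the net, of which there are $O\paren{N_{\epsilon/4}\paren{X}}$, then sum dyadically as in Lemma~\ref{lemma_upper_new1}) yields the theorem; in the ladder example the intervals have length far below $t$ and simply never register at threshold $t.$ Two smaller points: your filling estimate (ii) is true but holds in \emph{every} metric space (coning uses no Euclidean geometry), so it cannot be the feature excluding the example of Proposition~\ref{prop:ripsExample} --- there, too, every cycle has diameter comparable to its death time; the genuinely Euclidean input must bound the multiplicity of cycles linearly in the number of points, which is what the kissing-number argument supplies. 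And the appeal to Alexander duality on the nerve of balls concerns the \v{C}ech picture; Rips complexes at scale $t$ in the plane are not nerves of balls, so the holes of a union of balls do not control Rips $H_1$ counts.
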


In the process of proving this, we also show:
\begin{theorem}
\label{theorem_rips_count}
If $\textbf{x}$ is a finite subset of $\R^m$ then the first-dimensional persistent homology of the Rips complex of $\textbf{x}$ contains $O\paren{\abs{\textbf{x}}}$ intervals.
\end{theorem}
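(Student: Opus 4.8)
The plan is to reduce the interval count to a packing problem. First I would observe that the full Rips complex on a finite set $\textbf{x}$ is the complete simplex on $\abs{\textbf{x}}$ vertices, hence contractible, so every $\PH_1$ interval of the Rips filtration is finite and each interval of positive length is born at a uniquely determined \emph{creator} edge: a $1$-simplex $\{u,v\}$ whose endpoints are already joined by a path of strictly shorter edges (so that adding it raises $\beta_1$) and whose resulting cycle is not yet a boundary at the scale $\ell=\abs{uv}$ at which it enters. It therefore suffices to prove that the number of creator edges is $O\paren{\abs{\textbf{x}}}$ with an implied constant depending only on $m$. I would group the creator edges into dyadic length classes $\ell\in[2^k,2^{k+1})$ and bound the contribution of each class.

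The geometric engine is the doubling property of $\R^m$: any ball of radius $R$ can be covered by at most $c_m$ balls of radius $R/4$, so any subset of such a ball whose pairwise distances all exceed $R/2$ has at most $c_m$ elements. I would charge each creator edge $\{u,v\}$ of length $\ell\approx 2^k$ to the pair (endpoint $u$, scale $k$) and argue that a fixed vertex can serve as the charged endpoint of only boundedly many creators of a given length class. The heuristic is that if $\{u,v_1\},\dots,\{u,v_N\}$ were creators of comparable length $\ell$ with $N$ large, then by the packing bound two far endpoints, say $v_i,v_j$, would satisfy $\abs{v_iv_j}<\ell$; the triangle $\{u,v_i,v_j\}$ then enters by scale $2\ell$ and identifies the cycles through $\{u,v_i\}$ and $\{u,v_j\}$, so the corresponding classes cannot all be independent persistent generators.

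The main obstacle is that being a creator is a \emph{global} homological condition---the cycle must fail to bound in the entire complex at scale $\ell$---whereas the packing step only produces local relations (triangles) among cycles. Turning ``these cycles are pairwise homologous near $u$'' into ``at most $c_m$ of them survive as distinct generators'' requires choosing canonical representatives: I would represent each creator's class by a shortest cycle through its birth edge and show that a creator forces an \emph{empty lens} $B(u,\ell)\cap B(v,\ell)$ to be free of the configurations that would fill it, so that distinct surviving generators are separated in scale space and may be packed. The second difficulty is accumulation across scales: a bound of $c_m$ creators per (vertex, dyadic scale) only yields $O\paren{\abs{\textbf{x}}\log\Delta}$, where $\Delta$ is the aspect ratio of $\textbf{x}$. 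To remove the logarithm I would amortize against the merge structure of $\PH_0$: every creator closes a loop along a path in the evolving connectivity forest, and charging it to the most recent merge---equivalently, to an edge of the minimal spanning tree, of which there are exactly $\abs{\textbf{x}}-1$---with bounded multiplicity per merge eliminates the scale factor.

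Assembling these pieces yields a bound of the form $C_m\abs{\textbf{x}}$ on the number of positive-length $\PH_1$ intervals. An alternative route that avoids the representative-chasing is to build a filtration-compatible acyclic matching (a persistent discrete Morse function) on the Rips filtration whose number of critical $1$- and $2$-cells is $O_m\paren{\abs{\textbf{x}}}$; since the interval count in degree one is bounded by the number of critical $1$-cells of the induced Morse complex, this would give the theorem directly, and it is in this local-matching formulation that I expect the doubling constant of $\R^m$ to enter most transparently. Either way, the resulting linear interval count is exactly the combinatorial input needed for the upper bound in Theorem~\ref{ripsUpper}.
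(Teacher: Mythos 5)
There is a genuine gap, and it sits exactly where you flagged ``the main obstacle.'' Your packing step concludes that if two creator edges $\set{u,v_i},\set{u,v_j}$ of length $\approx\ell$ have $\abs{v_iv_j}<\ell$, then the triangle entering by scale $2\ell$ identifies the two cycles, ``so the corresponding classes cannot all be independent persistent generators.'' But the theorem counts \emph{intervals}, not classes surviving to a later scale: both cycles can be born near $\ell$, and the triangle merely kills one of them by $2\ell$ --- that still yields two intervals, each of positive length, and each counted. So the packing bound of $c_m$ creators per (vertex, dyadic scale) simply does not follow; short-lived intervals are precisely the ones that can proliferate (compare the example of Section~\ref{sec:Arcs}, where $n$ points produce $\approx n^{1.5}$ short \v{C}ech intervals). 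The ``empty lens''/canonical-representative sketch is an aspiration, not an argument, and the two patches you propose do not close the hole either: the amortization against the minimal spanning tree has no bounded-multiplicity proof and fails in simple configurations (two tight clusters joined by one long MST edge, where essentially all cross edges would charge to that single merge), and the discrete-Morse alternative is a restatement of the theorem rather than a route to it, since building a filtration-compatible matching with $O_m\paren{\abs{\textbf{x}}}$ critical $1$- and $2$-cells is the whole difficulty.

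It is instructive to see how the paper evades both of your obstacles at once. It never stratifies by scale. Instead it inducts on $\abs{\textbf{x}}$: removing a vertex $y$, writing $\mathcal{R}_{\epsilon}\paren{\textbf{x}}=\mathcal{R}_{\epsilon}\paren{\textbf{x}\setminus y}\cup S_{\epsilon}\paren{y}$ with intersection $L_{\epsilon}\paren{y},$ and using the Mayer--Vietoris sequence (with field coefficients, so $H_1\paren{\mathcal{R}_{\epsilon}\paren{\textbf{x}}}\cong\text{im}\,\alpha\oplus\text{im}\,\partial$), it shows each $\beta_1$-increment of $\textbf{x}$ either forces a birth in $\widetilde{PH}_1\paren{\textbf{x}\setminus y}$ or injects into $\widetilde{PH}_0$ of the link filtration of $y$ (Lemma~\ref{lemma:changeByone} plays the role of your creator-edge reduction, which is the one part you got right). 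The link count is then bounded by $K_m-1$ \emph{uniformly over all scales simultaneously}: two points $z_j,z_k$ creating distinct link components satisfy $d\paren{z_j,z_k}>\max\paren{d\paren{z_j,y},d\paren{z_k,y}},$ so the directions from $y$ pairwise subtend angles exceeding $\pi/3,$ a kissing-number packing with no dyadic decomposition and hence no logarithm to remove. The crucial difference from your plan is \emph{what} gets packed: not creator edges of comparable length (where identification at a later scale proves nothing about interval counts), but link components, whose defining condition builds in separation relative to the larger of the two radii and whose count provably dominates, via the boundary map and the injectivity argument in the proof, the number of intervals not inherited from $\textbf{x}\setminus y.$
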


In previous work with MacPherson~\cite{2012macpherson}, we defined an alternate notion of persistent homology dimension that measures the complexity of a shape rather than a classical notion of fractal dimension. In Appendix~\ref{sec:compWithMeas}, we show that it is a lower bound for $\text{dim}_{\PH}^1.$

We prove the upper bounds for Theorems~\ref{mainTheorem} and~\ref{thmLower} in Section~\ref{sec_upper_bound}, and the lower bounds in Section~\ref{sec_lower_bound}. The results and arguments will be stated for the \v{C}ech complex, but we will indicate which ones also work for the Rips complex. We prove our results specific to the Rips complex in Section~\ref{sec:rips}. First, we cover some preliminaries.

\section{Preliminaries}

In the introduction we defined the upper box dimension in terms of $N_\delta\paren{X},$  maximal number of disjoint closed balls centered at points of a bounded metric space $X:$
\s{\text{dim}_{\text{box}}\paren{X}=\limsup_{\delta\rightarrow 0}{\frac{\log\paren{N_\delta\paren{X}}}{\log\paren{1/\delta}}}\,.}
Note that this is equivalent to taking $\text{dim}_{\text{box}}\paren{X}$ to be the unique real number $d$ so that
\begin{itemize}
\item $N_\delta\paren{X}=O\paren{\delta^{-\alpha}}$ for all $\alpha>d.$ That is, for all $\alpha>d$ there exists a $C>0$ so that $N_\delta\paren{X}<C\delta^{-\alpha}$ for all $\delta>0.$
\item $N_\delta\paren{X}\neq O\paren{\delta^{-\alpha}}$ for all $\alpha<d.$ That is, for all $\alpha<d$ and all $D>0$ there exists a sequence $\delta_j\rightarrow 0$ so that $N_\delta\paren{X}>D\delta^{-\alpha}$ for all $j.$
\end{itemize}
There are other equivalent definitions. In particular $N_{\delta\paren{X}}$ can be replaced by the minimum number of balls of radius $\delta$ required to cover $X$ or, if $X\subset\mathbb{R}^m,$ the number of cubes in a regular square tesselation of $\mathbb{R}^m$ of width $\delta$ which intersect $X.$ See Falconer~\cite{Falconer} for details.

Also, if $\Delta$ is an $k$-dimensional simplex in $\mathbb{R}^m$, the \textbf{circumsphere} of $\Delta$ is the smallest $m-1$-dimesional sphere containing the vertices of $\Delta.$ The \textbf{circumradius} of $\Delta$ is the radius of its circumsphere.

In the following, bold lower case letters will denote finite point sets  (i.e. $\textbf{x},\textbf{y}$) .

\subsection{Persistent Homology}
Here, we provide a brief introduction to persistent homology. For a more thorough exposition, see see~\cite{2008edelsbrunner,2010harer,2016chazal}. We assume the reader is familiar with the basics of simplicial homology, as in~\cite{2002hatcher}. 

\subsubsection{Filtrations}
\begin{figure}
\centering
\includegraphics[width=.7\textwidth]{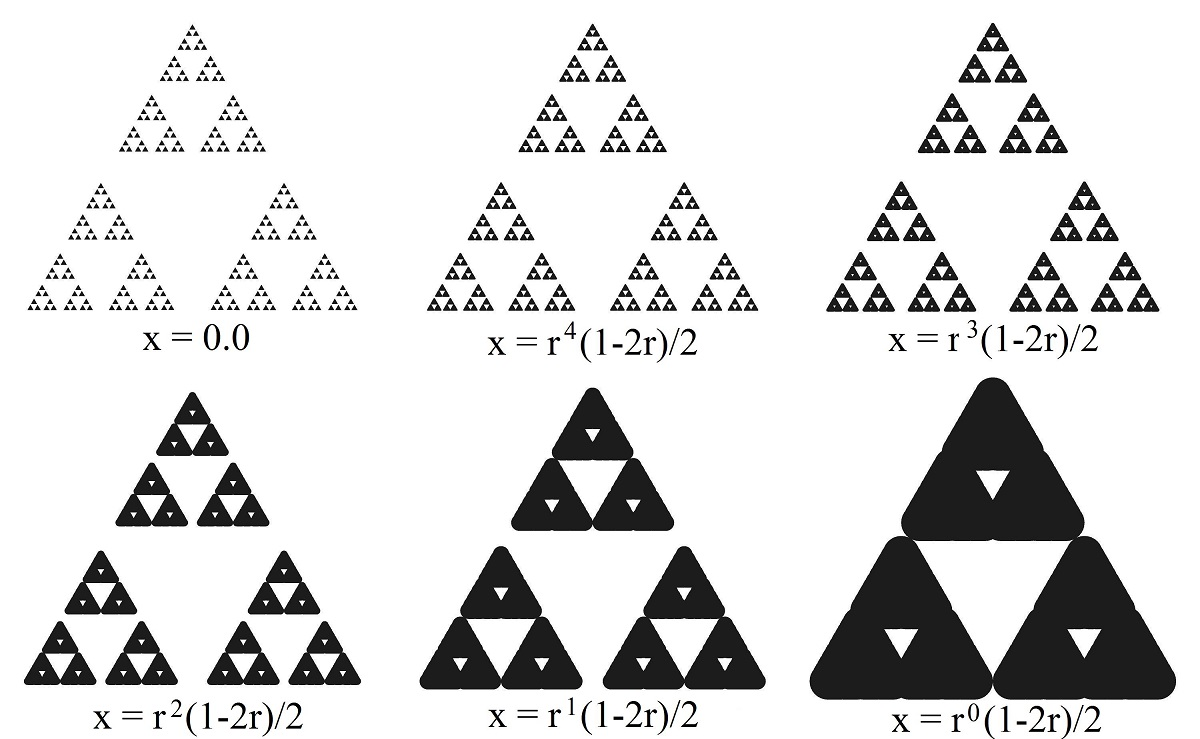}
\caption{A self-similar fractal and its $r$-neighborhoods~\cite{2012macpherson}.}
\label{fig:triangles}
\end{figure}

A \textbf{filtration} is a sequence of topological spaces $\set{S_\alpha}_{\alpha\in I}$ together with inclusion maps $i:S_\alpha\rightarrow S_\beta$ for $\alpha,\beta \in I, \alpha<\beta,$ where $I$ is an ordered index set --- usually the positive real numbers, the natural numbers, or a finite set of real numbers. For example, if $X$ is a subset of a metric space $M$ and $\epsilon>0$ the $\epsilon$-neighborhood filtration of $X$ is $\set{X_{\epsilon}}_{\epsilon\in\mathbb{R}^+}$ where
\[X_{\epsilon}=\set{x\in M: d\paren{x,y}<\epsilon\text{ for some }y\in X}\,.\]
See Figure~\ref{fig:triangles} for an example. 

We define three commonly used filtrations, each indexed by a positive real number $\epsilon.$ If $X$ is a subset of a metric space, the \textbf{\v{C}ech complex} on $X$ is the filtration of simplicial complexes $C_{\epsilon}\paren{X},$ defined by
\[ \paren{x_1,\ldots, x_k}\in C_{\epsilon}\paren{X}\text{ if } \cap_{j=1}^k B_{\epsilon}\paren{x_{j}}\neq \varnothing\,,\]
where $x_1,\ldots x_k$ are points in $X.$ For example if $X$ is a subset of Euclidean space, the $2$-simplex formed by the vertices of an acute triangle enters the \v{C}ech complex when $\epsilon$ is the circumradius of that triangle. Also in the Euclidean case, $C_{\epsilon}\paren{X}$ is homotopy equivalent to the $\epsilon$-neighborhood $X_{\epsilon}.$

If $\textbf{x}$ is a finite subset of Euclidean space, we can define another filtration called the Alpha complex on $\textbf{x},$ $A_{\epsilon}\paren{\textbf{x}}.$ It is smaller than the \v{C}ech complex on $\textbf{x},$ but contains equivalent topological information: $A_{\epsilon}\paren{\textbf{x}}$ is homotopy equivalent to $C_{\epsilon}\paren{\textbf{x}}$ for all $\epsilon>0.$~\cite{2002edelsbrunner} We define the Alpha complex for point sets in general position; a more general definition is in~\cite{2002edelsbrunner}. First, we require the definition of the Delaunay triangulation~\cite{1934delaunay,2012cheng}.

Let $\textbf{x}$ be a finite subset of $\mathbb{R}^m$ in general position. The \textbf{Delaunay triangulation} $\text{DT}\paren{\textbf{x}}$ on $\textbf{x}$ is the unique triangulation of $\mathbb{R}^m$ with the property that if $\Delta$ is any $m$-simplex in $DT\paren{\textbf{x}}$ and $S\paren{\Delta}$ is its circumsphere, then no point of $\textbf{x}$ is contained in the bounded component of $S\paren{\Delta}.$ 

Now, we can define the Alpha complex of a point set $\textbf{x}\subset\mathbb{R}^m$ in general position. The \textbf{Alpha complex} $A_{\epsilon}\paren{\textbf{x}}$ is a filtration of subcomplexes of the Delaunay triangulation on $\textbf{x},$ $\text{DT}\paren{\textbf{x}}.$ A simplex $\sigma\in \text{DT}\paren{\textbf{x}}$ is contained in $A_{\epsilon}\paren{\textbf{x}}$ if either of the following two conditions is met:
\begin{itemize}
\item The circumradius of $\sigma$ is less than or equal to $\epsilon,$ and no points of $\textbf{x}$ are contained in the bounded component of the complement of its circumsphere.
\item $\sigma$ is a simplex of a higher-dimensional simplex contained in $A_{\epsilon}\paren{\textbf{x}}.$
\end{itemize}

Finally, we define the Rips complex~\cite{1927vietoris,2007de_silva} of a metric space which, unlike the two previous constructions, does not depend on an ambient space.
If $X$ is a metric space, the \textbf{Rips complex} $R_{\epsilon}\paren{X}$ is the simplicial complex with vertex set $X$ so that
\[\paren{x_{1},\ldots,x_{l}}\in R_{\epsilon}\paren{X}\text{ if }d\paren{x_{j},x_{k}}\leq \epsilon\text{ for } \,1\leq j < k \leq l\,,\]
where $x_1,\ldots,x_l$ are points in $X.$

\subsubsection{Definition of Persistent Homology}

Let $H_i\paren{X}$ denote the reduced homology of a topological space, with coefficients in a field $k$.\footnote{We assume the reader is familiar with the basics of simplicial homology, as in~\cite{2002hatcher}.} If $X_{\alpha}$ is a filtration of topological spaces, the \textbf{persistence module} of $X_{\alpha},$ is the product $\prod_{\alpha} H_i\paren{X_\alpha},$ together with the maps $i_{\alpha,\beta}:H_i\paren{X_{\alpha}}\rightarrow H_i\paren{X_{\beta}}$ for $\alpha<\beta.$ If the rank of $i_{\alpha,\beta}$ is finite for all $\alpha<\beta,$ the structure of the persistent homology of $X_\alpha$ is captured by a unique set of intervals~\cite{2005zomorodian,2009chazal} $\PH_i\paren{X}$ so that
\[\text{rank}\:i_{\alpha,\beta}=\abs{\set{I\in \PH_i\paren{X}:\brac{\alpha,\beta}\subseteq I}}\,.\]
If $Y$ is a compact  subset of a metric space, the Rips and \v{C}ech filtrations on $X$ satisfy the required finiteness hypotheses~\cite{2014chazal,2016chazal}. In the following $\PH_i\paren{X}$ will refer to the set of intervals of the $i$-dimensional persistent homology of the \v{C}ech filtration of $X$ and $\widetilde{\PH}_i\paren{X}$ will refer to the $i$-dimensional persistent homology of the Rips filtration of $X.$ Note that $\PH_i\paren{X}$ depends on the ambient space.

We will repeatedly use a stability theorem for persistent homology~\cite{2007cohen,2014chazal}. If $X$ and $Y$ are filtrations, let the bottleneck distance between $\PH_i\paren{X}=\set{\paren{b_i,d_i}}$ and $\PH_i\paren{Y}=\set{\paren{\hat{b}_j,\hat{d}_j}}$ be 
\[d_B\paren{\PH_i\paren{X},\PH_i\paren{Y}}=\inf_{\eta}\sup_{j}\max\paren{\abs{b_j-\hat{b}_{\eta\paren{j}}},\abs{d_j-\hat{d}_{\eta\paren{j}}}}\]
where $\eta$ ranges over all partial matchings between the intervals $\PH_i\paren{X}$ and $\PH_i\paren{Y},$ allowing intervals from either set to be matched to intervals along the diagonal (that is, ones of the form $(d,d)$). Also, let $d_H$ denote the Hausdorff distance between subsets of a metric space
\[d_H\paren{X,Y}=\text{max}\paren{\sup_{x\in X}\inf_{y\in Y}d\paren{x,y},\sup_{y\in Y}\inf_{x\in X}d\paren{x,y}}\,.\]

\begin{theorem}(Stability of the Bottleneck Distance~\cite{2007cohen})
\label{thm_stable}
If $X$ and $Y$ are bounded subsets of a metric space and $i\in \N$ then
\[d_B\paren{\PH_i\paren{X},\PH_i\paren{Y}}\leq d_H\paren{X,Y}\,.\]
\end{theorem}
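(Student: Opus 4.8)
The plan is to deduce the bottleneck bound from an interleaving of the two \v{C}ech filtrations at the level of persistence modules, and then to invoke the algebraic stability theorem for interleaved modules. Fix any $\epsilon > d_H\paren{X,Y}$, which is finite since both sets are bounded; I will show that the persistence modules $\set{H_i\paren{C_\alpha\paren{X}}}_\alpha$ and $\set{H_i\paren{C_\alpha\paren{Y}}}_\alpha$ are $\epsilon$-interleaved, conclude that $d_B\paren{\PH_i\paren{X},\PH_i\paren{Y}}\leq\epsilon$, and then let $\epsilon$ decrease to $d_H\paren{X,Y}$. Here an $\epsilon$-interleaving means there are families of maps shifting the filtration parameter by $\epsilon$ in each direction that are compatible with the internal structure maps $i_{\alpha,\beta}$ of the two modules.

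To build these maps I would first convert the Hausdorff bound into a pair of vertex maps. Since $\inf_{y\in Y}d\paren{x,y}\leq d_H\paren{X,Y}<\epsilon$ for each $x\in X$, I can choose $\phi\paren{x}\in Y$ with $d\paren{x,\phi\paren{x}}<\epsilon$, and symmetrically a map $\psi\colon Y\to X$ with $d\paren{y,\psi\paren{y}}<\epsilon$. The key point is that $\phi$ is simplicial from $C_\alpha\paren{X}$ to $C_{\alpha+\epsilon}\paren{Y}$ for every $\alpha$: if $\paren{x_1,\ldots,x_k}$ spans a simplex of $C_\alpha\paren{X}$, there is a common point $z$ with $d\paren{z,x_j}<\alpha$ for all $j$, whence $d\paren{z,\phi\paren{x_j}}\leq d\paren{z,x_j}+d\paren{x_j,\phi\paren{x_j}}<\alpha+\epsilon$, so the balls $B_{\alpha+\epsilon}\paren{\phi\paren{x_j}}$ all contain $z$ and the image is a simplex of $C_{\alpha+\epsilon}\paren{Y}$. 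The same estimate makes $\psi$ simplicial from $C_\alpha\paren{Y}$ to $C_{\alpha+\epsilon}\paren{X}$, and since $\phi$ and $\psi$ are defined vertexwise they commute with the filtration inclusions up to the $\epsilon$-shift, so the induced homology maps are compatible with the internal maps of the two modules.

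The main technical step is to verify the interleaving triangles, i.e.\ that the composite $C_\alpha\paren{X}\to C_{\alpha+\epsilon}\paren{Y}\to C_{\alpha+2\epsilon}\paren{X}$ induced by $\psi\circ\phi$ agrees on homology with the filtration inclusion $C_\alpha\paren{X}\hookrightarrow C_{\alpha+2\epsilon}\paren{X}$ (and symmetrically for the other composite). I would prove this by a contiguity argument: for any simplex $\paren{x_1,\ldots,x_k}$ of $C_\alpha\paren{X}$ with common point $z$, one has $d\paren{z,\psi\paren{\phi\paren{x_j}}}\leq d\paren{z,x_j}+2\epsilon<\alpha+2\epsilon$, so the combined vertex set $\set{x_1,\ldots,x_k,\psi\phi\paren{x_1},\ldots,\psi\phi\paren{x_k}}$ spans a simplex of $C_{\alpha+2\epsilon}\paren{X}$, every ball of which contains $z$. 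Hence $\psi\circ\phi$ and the inclusion are contiguous and induce the same map on $H_i$, which is exactly the commutativity needed for an $\epsilon$-interleaving.

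With the $\epsilon$-interleaving in hand, the theorem follows from the algebraic stability theorem~\cite{2014chazal}, which asserts that an $\epsilon$-interleaving of persistence modules forces $d_B\leq\epsilon$ between their diagrams; letting $\epsilon\downarrow d_H\paren{X,Y}$ yields the claim. I expect the main obstacle to lie in this last, purely algebraic input: producing the bottleneck matching directly from the interleaving maps, rather than citing it, requires the structure theory of persistence modules and the interpolation and box-type arguments of Cohen-Steiner, Edelsbrunner, and Harer. The geometric half --- converting the Hausdorff bound into an interleaving via the contiguity estimates above --- is routine by comparison, and the identical argument applies to the Rips filtration, since there the common-intersection condition is replaced by pairwise distance bounds that are likewise shifted by $\epsilon$ under $\phi$ and $\psi$.
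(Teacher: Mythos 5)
The paper does not actually prove Theorem~\ref{thm_stable}: it is quoted from Cohen-Steiner, Edelsbrunner, and Harer~\cite{2007cohen}, whose original argument proceeds by interpolating between tame functions on a triangulable space and applying a box-counting ("Box Lemma") argument to the sublevel-set filtrations of the distance functions $d_X$ and $d_Y$, using $\norm{d_X-d_Y}_\infty=d_H\paren{X,Y}$. Your proof is the standard modern alternative --- a geometric $\epsilon$-interleaving of the \v{C}ech persistence modules followed by the algebraic stability theorem of~\cite{2014chazal} --- and for the statement as given it is correct: the vertex maps $\phi,\psi$ extracted from the Hausdorff bound, the verification that they are simplicial with an $\epsilon$-shift (a witness point $z$ in $\cap_j B_\alpha\paren{x_j}$ survives as a witness in $\cap_j B_{\alpha+\epsilon}\paren{\phi\paren{x_j}}$), and the contiguity argument for the interleaving triangles are all sound, and deferring the purely algebraic step to the cited stability theorem is legitimate since you flag it explicitly. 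What your route buys over the 2007 original is generality: it needs no triangulability of the ambient space and no tameness of distance functions, only that the two persistence modules are q-tame so that the diagrams and $d_B\paren{\cdot,\cdot}$ are defined --- a hypothesis the paper itself invokes (it holds for compact, in particular finite, $X$ and $Y$), and which you should state since "bounded" alone does not guarantee it.

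One genuine slip is your closing remark about the Rips filtration. With the paper's diameter convention, $\paren{x_1,\ldots,x_l}\in R_\epsilon\paren{X}$ iff all pairwise distances are at most $\epsilon$, a vertex map moving each point by less than $\epsilon$ inflates a pairwise distance by up to $2\epsilon$, since \emph{both} endpoints move: $d\paren{\phi\paren{x_i},\phi\paren{x_j}}\leq d\paren{x_i,x_j}+2\epsilon$. So $\phi$ is simplicial only from $R_\alpha\paren{X}$ into $R_{\alpha+2\epsilon}\paren{Y}$, your construction yields a $2\epsilon$-interleaving, and the conclusion for Rips is $d_B\leq 2\,d_H\paren{X,Y}$; the factor-one statement holds only under the radius convention for the Rips parameter. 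This does not affect the theorem under review, which concerns the \v{C}ech filtration, but the claim that "the identical argument applies" with shifts of $\epsilon$ is not right as written.
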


In particular, if $I_{i,\epsilon}\paren{X}$ is the set of $\PH_i$ intervals of $X$ of length greater than $\epsilon$
\begin{equation}
\label{eqn_Iie}
I_{i,\epsilon}\paren{\textbf{x}}=\set{\paren{b,d}:\PH_i\paren{\textbf{x}}:d-b>\epsilon}\,,
\end{equation}
then we have the following result.
\begin{cor}
\label{stableCorollary}
Let $X$ and $Y$ be compact subsets of a metric space and $\epsilon,\delta>0.$ If $d_H\paren{X,Y}<\delta/2$ then
\[\abs{I_{i,\epsilon+\delta}\paren{X}}\leq \abs{I_{i,\epsilon}\paren{X}}\,.\]
\end{cor}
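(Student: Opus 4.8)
The plan is to derive this directly from the bottleneck stability theorem (Theorem~\ref{thm_stable}) by examining a single near-optimal partial matching; note that the content of the corollary is a comparison between the diagrams of $X$ and $Y,$ so that the right-hand side is $\abs{I_{i,\epsilon}\paren{Y}}.$ First I would invoke stability to conclude that $d_B\paren{\PH_i\paren{X},\PH_i\paren{Y}}\leq d_H\paren{X,Y}<\delta/2.$ Since the bottleneck distance is an infimum over partial matchings, there exists a partial matching $\eta$ between $\PH_i\paren{X}$ and $\PH_i\paren{Y}$ whose cost $\sup_j\max\paren{\abs{b_j-\hat b_{\eta\paren{j}}},\abs{d_j-\hat d_{\eta\paren{j}}}}$ is strictly less than $\delta/2.$ The strategy is then to show that $\eta$ restricts to an injection from $I_{i,\epsilon+\delta}\paren{X}$ into $I_{i,\epsilon}\paren{Y},$ which immediately yields the claimed inequality.

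The core of the argument is a two-part case analysis for an interval $\paren{b,d}\in I_{i,\epsilon+\delta}\paren{X},$ i.e.\ one with $d-b>\epsilon+\delta.$ Under $\eta$ such an interval is either paired with a genuine interval of $\PH_i\paren{Y}$ or matched to the diagonal, and I would first rule out the latter: the cost of matching $\paren{b,d}$ to the diagonal equals its distance to the diagonal in the $\ell^\infty$ metric, namely $\paren{d-b}/2,$ which exceeds $\paren{\epsilon+\delta}/2>\delta/2$ because $\epsilon>0,$ contradicting the cost bound on $\eta.$ Hence $\paren{b,d}$ is matched to some $\paren{\hat b,\hat d}\in\PH_i\paren{Y}$ with $\abs{b-\hat b}<\delta/2$ and $\abs{d-\hat d}<\delta/2.$ The triangle inequality then gives $\hat d-\hat b>\paren{d-b}-\delta>\epsilon,$ so $\paren{\hat b,\hat d}\in I_{i,\epsilon}\paren{Y}.$ Because a partial matching is injective on the intervals it pairs, distinct elements of $I_{i,\epsilon+\delta}\paren{X}$ map to distinct elements of $I_{i,\epsilon}\paren{Y},$ and counting gives $\abs{I_{i,\epsilon+\delta}\paren{X}}\leq\abs{I_{i,\epsilon}\paren{Y}}.$

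The only delicate points are bookkeeping ones, so I do not expect a serious obstacle. One must be careful to use the strict inequality $d_H\paren{X,Y}<\delta/2$ to extract a matching of cost strictly below $\delta/2$ (rather than merely at most $\delta/2$), since this strictness is exactly what forbids a length-exceeding-$\paren{\epsilon+\delta}$ interval from collapsing onto the diagonal and what upgrades the endpoint estimates to $\hat d-\hat b>\epsilon.$ I would also make explicit the computation that the distance from $\paren{b,d}$ to the diagonal is $\paren{d-b}/2,$ attained at the midpoint $\paren{\paren{b+d}/2,\paren{b+d}/2},$ as this factor of one-half is precisely what makes the hypothesis $d_H<\delta/2$ (rather than $d_H<\delta$) sufficient. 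Compactness of $X$ and $Y$ guarantees the finiteness conditions needed for the diagrams and the stability theorem to be valid, so no further care is required there.
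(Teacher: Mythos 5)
Your proof is correct and follows exactly the route the paper intends: the corollary is stated as an immediate consequence of Theorem~\ref{thm_stable}, and your argument---extract a partial matching of cost below $\delta/2,$ rule out diagonal matches for intervals of length exceeding $\epsilon+\delta$ via the $\paren{d-b}/2$ distance-to-diagonal computation, and push the endpoint estimates through to get an injection into $I_{i,\epsilon}\paren{Y}$---is precisely the standard filling-in of that one-line deduction. You also correctly identified that the right-hand side of the statement should read $\abs{I_{i,\epsilon}\paren{Y}}$ rather than $\abs{I_{i,\epsilon}\paren{X}},$ a typo in the paper.
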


The preceding theorem and corollary are also true for $\widetilde{PH}_i.$ 

\subsection{Persistent Homology Dimension} 

Recall that the $\PH_i$ dimension of a bounded subset of a metric space is
\s{\text{dim}_{\PH}^i\paren{X}=\inf\set{\alpha : \exists\,C\text{ so that } E_\alpha^i\paren{\textbf{x}} < C\;\forall \text{ finite }\textbf{x}\subset X}\,.}
A straightforward argument based on stability of the bottleneck distance shows that we could modify the definition of $\text{dim}_{\PH}^i(X)$ to consider all compact subsets of $X,$  rather than all finite subsets: 
\s{\text{dim}_{\PH}^i\paren{X}=\inf\set{\alpha : \exists\,C\text{ so that }E_\alpha^i\paren{Y} < C\;\forall \text{ compact } \;Y\subseteq X}\,.}
In Proposition~\ref{prop_upper1} below, we provide another equivalent definition of the $\PH_i$ dimension, in terms of the asymptotic number of ``long'' intervals of ``well-spaced'' point sets of $X.$

Also, for the Rips complex define
\s{\widetilde{E}_\alpha^i\paren{\textbf{x}}=\sum_{\paren{b,d}\in \widetilde{\PH}_i\paren{\textbf{x}}}\paren{d-b}^{\alpha}\,,}
where the sum is taken over the finite intervals of $\widetilde{\PH}_i\paren{\textbf{x}}.$ With that,
\s{\text{dim}_{\widetilde{\PH}}^i\paren{X}=\inf\set{\alpha :\exists\,C\text{ so that } \widetilde{E}_\alpha^i\paren{\textbf{x}} < C\;\forall\text{ finite }\textbf{x}\subset X}\,.}

\section{An Upper Bound}
\label{sec_upper_bound}

 Our strategy for bounding  $\text{dim}_{\PH}^i\paren{X}$ in terms of $\text{dim}_{\text{box}}\paren{X}$ is to approximate subsets of $X$ by finite point sets whose size is controlled by the box dimension, and apply bottleneck stability. In this section, Lemmas~\ref{lemma_upper_new1} and~\ref{lemma_upper_new} and Propositions~\ref{prop_upper1} and~\ref{prop1} hold for the Rips complex using identical arguments, but Corollaries~\ref{corollary_upper_2} and~\ref{corollary_upper} only apply to the \v{C}ech complex.

\begin{Lemma}
\label{lemma_upper_new1}
Let $X$ be a bounded metric space and suppose there are positive real numbers $c$ and $D_0$ so that for all $\epsilon>0$ and all $\textbf{x}\subset X$
\begin{equation}
\label{eqn_h0}
\abs{I_{i,\epsilon}\paren{\textbf{x}}}<D_0\epsilon^{-c}\,,
\end{equation}
where $I_{i,\epsilon}\paren{\textbf{x}}$ is defined in Equation~\ref{eqn_Iie}.
Then
$\text{dim}_{\PH}^i\paren{X}\leq c.$
\end{Lemma}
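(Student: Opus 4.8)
The plan is to prove the bound directly from the definition of $\text{dim}_{\PH}^i$: it suffices to show that for every $\alpha > c$ there is a constant $C_\alpha$, independent of $\textbf{x}$, with $E_\alpha^i\paren{\textbf{x}} < C_\alpha$ for all finite $\textbf{x} \subset X$; this gives $\text{dim}_{\PH}^i\paren{X} \leq \alpha$ for every such $\alpha$, and letting $\alpha \downarrow c$ yields the claim. The first ingredient is a uniform ceiling on interval lengths: since $X$ is bounded, every bounded $\PH_i$ interval of any finite $\textbf{x} \subset X$ has length at most $L := \text{diam}\paren{X}$, because once the scale $\epsilon$ exceeds $L$ the \v{C}ech complex is contractible and no bounded interval can persist past that scale. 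This fixed top scale is what converts the hypothesis on interval counts into a convergent sum.

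The core step is a dyadic decomposition of the intervals by length. For each integer $j \geq 0$, group together the bounded intervals $\paren{b,d}$ whose length lies in the band $\paren{L 2^{-\paren{j+1}}, L 2^{-j}}$. The number of such intervals is at most the number of intervals of length exceeding $L 2^{-\paren{j+1}}$, which by the hypothesis in Equation~\ref{eqn_h0} is bounded by $D_0 \paren{L 2^{-\paren{j+1}}}^{-c} = D_0 L^{-c} 2^{\paren{j+1}c}$. Each interval in this band contributes at most $\paren{L 2^{-j}}^{\alpha}$ to $E_\alpha^i\paren{\textbf{x}}$, so the total contribution of band $j$ is at most $D_0 \, 2^{c} L^{\alpha-c} \, 2^{j\paren{c-\alpha}}$.

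Summing over all $j \geq 0$ produces a geometric series with ratio $2^{c-\alpha} < 1$, which is precisely where the assumption $\alpha > c$ is used:
\[
E_\alpha^i\paren{\textbf{x}} \;\leq\; D_0 \, 2^{c} L^{\alpha-c} \sum_{j=0}^{\infty} 2^{j\paren{c-\alpha}} \;=\; \frac{D_0 \, 2^{c} L^{\alpha-c}}{1 - 2^{c-\alpha}} \;=:\; C_\alpha \,.
\]
The right-hand side is finite and depends only on $\alpha$, $c$, $D_0$, and $\text{diam}\paren{X}$, but not on $\textbf{x}$, which is exactly the uniform bound required.

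The only step that demands genuine care, rather than routine estimation, is the uniform length ceiling $L$: one must verify that boundedness of $X$ forces a single finite scale past which no bounded $\PH_i$ interval of any finite subset survives, so that the dyadic bands start at a fixed level independent of $\textbf{x}$ and the series genuinely begins at $j=0$. Everything else is bookkeeping. Since neither the counting hypothesis nor the dyadic summation uses any feature specific to the \v{C}ech filtration, the identical argument applies to $\widetilde{\PH}_i$ and hence bounds $\text{dim}_{\widetilde{\PH}}^i\paren{X}$ as well.
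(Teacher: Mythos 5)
Your proof is correct and follows essentially the same route as the paper's: the paper likewise decomposes the intervals into dyadic length bands, bounds each band's cardinality via the hypothesis of Equation~\ref{eqn_h0}, and sums the resulting geometric series with ratio $2^{c-\alpha}<1$; the only cosmetic difference is that the paper rescales $X$ to have diameter less than one (so intervals have length at most $1$), whereas you carry $L=\text{diam}\paren{X}$ through explicitly. Your uniform length ceiling is justified exactly as you say, since for $\epsilon\geq\text{diam}\paren{X}$ every $\epsilon$-ball centered at a point of $\textbf{x}$ contains all of $\textbf{x}$, making the \v{C}ech (or Rips) complex a full simplex and hence contractible.
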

\begin{proof}
Rescale $X$ if necessary so its diameter is less than one. Note that this implies that $\abs{I}\leq 1$ for all $I\in\PH_i\paren{X}.$ 

Let $\alpha>c$ and $\textbf{x}\subseteq X.$ We will bound $E_i^\alpha\paren{\textbf{x}}$ by summing over the contributions of the intervals whose lengths are between $2^{-k-1}$ and $2^{-k}.$  For $k\in\mathbb{N}$ let
\begin{equation}
\label{eqn_h1}
J_{i,k}\paren{\textbf{x}}=\set{I\in \PH_i\paren{\textbf{x}}:2^{-k-1} < \abs{I} \leq  2^{-k}}\,.
\end{equation}

Then
\begin{align*}
E_\alpha^i\paren{\textbf{x}}=& \;\; \sum_{k=0}^\infty\sum_{I\in J_{i,k}\paren{\textbf{x}}}\abs{I}^\alpha \\
\leq & \;\; \sum_{k=0}^\infty\abs{J_{i,k}\paren{\textbf{x}}}2^{-\alpha k} &&\text{by Eqn.~\ref{eqn_h1}}\\
\leq & \;\; \sum_{k=0}^\infty\abs{I_{i,2^{-k-1}}\paren{\textbf{x}}}2^{-\alpha k} &&\text{by Eqn.~\ref{eqn_Iie}}\\
\leq &  \;\; \sum_{k=0}^\infty D_0 2^{kc+c}2^{-\alpha k}&& \text{by Eqn.~\ref{eqn_h0}}\\
=& \;\; D_0 2^{c} \sum_{k=0}^\infty\paren{2^{c-\alpha}}^k\\
= & \;\; D_0 2^c \frac{1}{1-2^{c-\alpha}} && \text{because $\alpha>c$\,.}
\end{align*}
Therefore, $E_\alpha^i\paren{\textbf{x}}$ is uniformly bounded for all $\textbf{x}\subset X,$ $\text{dim}_{\PH}^i\paren{X}\leq \alpha$ for all $\alpha>c,$ and $\text{dim}_{\PH}^i\paren{X}\leq c.$ 
\end{proof}

As noted by Cohen-Steiner et. al.~\cite{2010CohenSteiner}, if $\textbf{x}$ is a subset of a triangulable metric space $M$ then $\abs{I_{i,\epsilon}\paren{\textbf{x}}}$ is less than the number of simplices in a triangulation of mesh $\epsilon$ of $M$. In particular, if $\textbf{x}$ is a finite subset of $\mathbb{R}^m$ (or an $m$-dimensional Riemannian manifold) then $\abs{I_{i,\epsilon}\paren{\textbf{x}}}=O\paren{\epsilon^{-m}}.$ 

\begin{cor}(Cohen-Steiner, Edelsbrunner, Harer, and Mileyko~\cite{2010CohenSteiner})
\label{corollary_upper_2}
Let $X$ be a bounded subset of $\R^m.$ Then
\[\text{dim}_{\PH}^i\paren{X}\leq m\,.\]
\end{cor}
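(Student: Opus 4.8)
The plan is to combine Lemma~\ref{lemma_upper_new1} with the simplicial-counting observation stated immediately before the corollary. The lemma reduces the problem to producing, for a fixed $i$, a uniform bound of the form $\abs{I_{i,\epsilon}\paren{\textbf{x}}} < D_0\,\epsilon^{-c}$ with $c = m$, valid for \emph{all} finite $\textbf{x}\subset X$ and all $\epsilon>0$. Once such a bound is in hand, Lemma~\ref{lemma_upper_new1} gives $\text{dim}_{\PH}^i\paren{X}\leq m$ directly. So the whole task is to verify the counting estimate $\abs{I_{i,\epsilon}\paren{\textbf{x}}}=O\paren{\epsilon^{-m}}$ with a constant independent of $\textbf{x}$.

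First I would make precise the claim from Cohen-Steiner et al.~\cite{2010CohenSteiner} that the number of $\PH_i$ intervals of length exceeding $\epsilon$ is bounded by the number of simplices in a triangulation of mesh $\epsilon$ of the ambient triangulable space. The mechanism is bottleneck stability (Theorem~\ref{thm_stable}) together with the finiteness of simplicial homology of a finite complex: an interval of $\PH_i\paren{\textbf{x}}$ of length greater than $\epsilon$ forces a nonzero $i$-dimensional homology class that persists across a window of width $\epsilon$ in the filtration, and the number of such ``long-lived'' classes is controlled by the ranks of the homology groups of the nerve/complex at the relevant scale, which in turn are bounded by the simplex counts of a fixed triangulation at mesh $\epsilon$. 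For $\textbf{x}\subset\mathbb{R}^m$, one fixes a regular cubical or simplicial subdivision of a bounded region containing $X$ at mesh $\epsilon$; since $X$ is bounded, the number of $i$-simplices in this subdivision is $O\paren{\epsilon^{-m}}$, with the implied constant depending only on the diameter of $X$ and on $m$, but crucially \emph{not} on the point set $\textbf{x}$.

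The key point to emphasize is the uniformity in $\textbf{x}$. This is what makes the bound usable in Lemma~\ref{lemma_upper_new1}, whose hypothesis~\eqref{eqn_h0} quantifies over all finite $\textbf{x}\subset X$ simultaneously. The triangulation furnishing the upper bound is chosen independently of $\textbf{x}$ — it depends only on $\epsilon$ and on a fixed bounded box containing $X$ — so the estimate $\abs{I_{i,\epsilon}\paren{\textbf{x}}} < D_0\,\epsilon^{-m}$ holds with a single constant $D_0$. Setting $c=m$ in Lemma~\ref{lemma_upper_new1} then yields $\text{dim}_{\PH}^i\paren{X}\leq m$ for every $i\in\N$.

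The main obstacle is not a calculation but a bookkeeping and citation matter: one must correctly extract from~\cite{2010CohenSteiner} the statement that a length-$\epsilon$ interval contributes at most one unit to a count dominated by simplex numbers at scale $\epsilon$, and confirm that their argument is insensitive to the cardinality of $\textbf{x}$. The Riemannian-manifold case mentioned in the remark requires the same triangulation estimate localized by a bounded atlas, but for the corollary as stated only the Euclidean case is needed, where a regular tessellation makes the simplex count transparent. Since the remark preceding the corollary already asserts $\abs{I_{i,\epsilon}\paren{\textbf{x}}}=O\paren{\epsilon^{-m}}$ uniformly, the corollary follows by a single application of Lemma~\ref{lemma_upper_new1}.
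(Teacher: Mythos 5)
Your proposal is correct and follows the paper's own route: the paper likewise takes the Cohen-Steiner--Edelsbrunner--Harer--Mileyko bound $\abs{I_{i,\epsilon}\paren{\textbf{x}}}=O\paren{\epsilon^{-m}}$ (uniform in $\textbf{x}$, since the mesh-$\epsilon$ triangulation of a bounded region containing $X$ is chosen independently of the point set) and feeds it into Lemma~\ref{lemma_upper_new1} with $c=m$. Your added emphasis on the uniformity in $\textbf{x}$ and on the stability mechanism behind the cited counting estimate is consistent with, and slightly more explicit than, the paper's citation-level treatment.
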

Which is the upper bound in Theorem~\ref{thmLower}.

In the following $X$ will be a bounded metric space, and $\textbf{x}^\epsilon$ will be the centers of a maximal collection of disjoint balls of radius $\epsilon/4$ centered at points of $X.$ By the maximality of $\textbf{x}^\epsilon,$ the balls of radius $\epsilon/2$ centered at the points of $\textbf{x}^\epsilon$ cover $X$ and
\begin{equation}
d_H\paren{\textbf{x}^{\epsilon},X}<\epsilon/2\,.
\end{equation}

\begin{Lemma}
\label{lemma_upper_new}
Let $X$ be a bounded metric space and let $c, D>0.$ For $\epsilon>0$ let $\textbf{x}^\epsilon$ be the centers of a maximal collection of disjoint balls of radius $\epsilon/4$ centered at points of $X.$ If 
\begin{equation}
\label{eqn_h2}
\abs{I_{i,\epsilon}\paren{\textbf{y}}}<D\epsilon^{-c}\text{ for all }\textbf{y}\subseteq \textbf{x}^\epsilon\,,
\end{equation}
for all $\epsilon>0$ then
\[\text{dim}_{\PH}^i\paren{X}\leq c\,.\]
\end{Lemma}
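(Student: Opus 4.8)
Looking at this statement, I need to prove that Lemma~\ref{lemma_upper_new} follows essentially from Lemma~\ref{lemma_upper_new1}, but the hypothesis is weaker: it only controls $\abs{I_{i,\epsilon}(\textbf{y})}$ for subsets $\textbf{y}$ of the specific net $\textbf{x}^\epsilon$, rather than for all finite subsets $\textbf{x}\subset X$. The plan is to bootstrap the weaker hypothesis into the stronger one by using bottleneck stability (Theorem~\ref{thm_stable} / Corollary~\ref{stableCorollary}) to transfer interval counts from an arbitrary finite $\textbf{x}\subset X$ to a suitable subset of the net $\textbf{x}^\epsilon$.

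\begin{proof}
Let $\textbf{x}\subseteq X$ be an arbitrary finite point set and fix $\epsilon>0$. Consider the net $\textbf{x}^{\epsilon}$ at scale $\epsilon$, whose balls of radius $\epsilon/2$ cover $X$ and which satisfies $d_H(\textbf{x}^{\epsilon},X)<\epsilon/2$. The key idea is to select the subset $\textbf{y}\subseteq \textbf{x}^{\epsilon}$ consisting of those net points that lie within distance $\epsilon/2$ of some point of $\textbf{x}$; equivalently, $\textbf{y}$ is obtained by rounding each point of $\textbf{x}$ to a nearby net point. Since every point of $\textbf{x}$ is within $\epsilon/2$ of a point of $\textbf{x}^{\epsilon}$ (by the covering property), and every point of $\textbf{y}$ is by construction close to $\textbf{x}$, we get $d_H(\textbf{x},\textbf{y})<\epsilon/2$. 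Then Corollary~\ref{stableCorollary} (applied with $\delta=\epsilon$) gives
\[
\abs{I_{i,2\epsilon}(\textbf{x})}\leq \abs{I_{i,\epsilon}(\textbf{y})}\,,
\]
and since $\textbf{y}\subseteq\textbf{x}^{\epsilon}$, hypothesis~\eqref{eqn_h2} bounds the right-hand side by $D\epsilon^{-c}$.

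Combining these, for every finite $\textbf{x}\subseteq X$ and every $\epsilon>0$ we obtain
\[
\abs{I_{i,2\epsilon}(\textbf{x})}< D\epsilon^{-c}=D\,2^{c}(2\epsilon)^{-c}\,.
\]
Writing $\epsilon'=2\epsilon$, this says $\abs{I_{i,\epsilon'}(\textbf{x})}<D_0 (\epsilon')^{-c}$ with $D_0=D2^{c}$, for all finite $\textbf{x}\subseteq X$ and all $\epsilon'>0$. This is precisely the hypothesis~\eqref{eqn_h0} of Lemma~\ref{lemma_upper_new1}, so that lemma yields $\text{dim}_{\PH}^i(X)\leq c$.
\end{proof}

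The main obstacle is setting up the matching so that bottleneck stability can be invoked cleanly. One must verify that the rounded set $\textbf{y}$ genuinely satisfies $d_H(\textbf{x},\textbf{y})<\epsilon/2$, which requires care: the covering property of $\textbf{x}^{\epsilon}$ supplies a nearby net point for each point of $\textbf{x}$, but one must also ensure no net point in $\textbf{y}$ is far from all of $\textbf{x}$, which is automatic from how $\textbf{y}$ is defined. The factor-of-two bookkeeping between the scale $\epsilon$ at which the net is built and the scale $2\epsilon$ at which intervals of $\textbf{x}$ are controlled is absorbed harmlessly into the constant $D_0$, since the exponent $c$ is unchanged; this is exactly the robustness that makes the box-dimension comparison go through.
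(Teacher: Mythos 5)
Your proof is correct and follows essentially the same route as the paper: the set you call $\textbf{y}$ is exactly the paper's $\textbf{y}^{\epsilon}=\set{x\in\textbf{x}^{\epsilon}:d\paren{x,\textbf{y}}<\epsilon/2}$, and both arguments establish $d_H<\epsilon/2$, apply Corollary~\ref{stableCorollary} to get $\abs{I_{i,2\epsilon}}\leq\abs{I_{i,\epsilon}}$ of the net subset, absorb the factor $2^{c}$ into the constant, and invoke Lemma~\ref{lemma_upper_new1}. The only difference is notational (and your explicit rescaling $\epsilon'=2\epsilon$ is slightly more careful bookkeeping than the paper's).
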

\begin{proof}
Rescale the metric if necessary so that the diameter of $X$ is less than one, and let $\alpha>c.$ We will show that $E^i_{\alpha}\paren{\textbf{y}}$ is uniformly bounded for all $\textbf{y}\subset X.$ 

Let $\textbf{y}\subset X,$ $\epsilon>0,$ and  
\[\textbf{y}^{\epsilon}=\set{x\in \textbf{x}^{\epsilon}: d\paren{x,\textbf{y}}<\epsilon/2}\,.\]
Every point of $\textbf{y}^{\epsilon}$ is within distance $\epsilon/2$ of a point of $\textbf{y}$, and every point of $\textbf{y}$ is within distance $\epsilon/2$ of a point of $\textbf{y}^\epsilon$ because $d_H\paren{\textbf{x}^{\epsilon},X}<\epsilon/2.$ It follows that $d_H\paren{\textbf{y}^\epsilon,\textbf{y}}<\epsilon/2.$ 

By stability (Corollary~\ref{stableCorollary}),
\[\abs{I_{i,2\epsilon}\paren{\textbf{y}}}\leq \abs{I_{i,\epsilon}\paren{\textbf{y}^\epsilon}} \leq D \epsilon^{-c}\,,\]
for all $\epsilon>0.$ 
Then
\[\abs{I_{i,\epsilon}\paren{\textbf{y}}} \leq 2^{c} D \epsilon^{-c}\,,\]
and the desired result follows from Lemma~\ref{lemma_upper_new1}.
\end{proof}

Next, we show that we can characterize $\text{dim}_{\PH}^i\paren{X}$ in terms of the number of ``long'' intervals of ``well-spaced'' point sets in $X.$
If $X$ is a bounded metric space and $\textbf{x}^\epsilon$ is a finite point set consisting of the centers of a maximal collection of balls of radius $\epsilon/4,$ centered at points of $X$ let
\[g\paren{\epsilon}=\max_{\textbf{y}\subseteq \textbf{x}^\epsilon}\abs{I_{i,\epsilon}\paren{\textbf{y}}}\,.\]
\begin{Proposition}
\label{prop_upper1}
Let $X$ be a bounded subset of a  metric space and let 
\[c_i\paren{X}=\limsup_{\epsilon\rightarrow 0}\frac{\log\paren{g\paren{\epsilon}}}{\log\paren{1/\epsilon}}\,.\]
 Then 
\[\text{dim}_{\PH}^i\paren{X}= c_i\paren{X}\,.\]
\end{Proposition}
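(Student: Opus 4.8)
The plan is to establish the two inequalities $\text{dim}_{\PH}^i\paren{X}\le c_i\paren{X}$ and $\text{dim}_{\PH}^i\paren{X}\ge c_i\paren{X}$ separately, the idea being to translate the $\limsup$ defining $c_i\paren{X}$ into the pointwise estimates on $g$ that the earlier lemmas consume. Throughout I would rescale the metric so that $\text{diam}\paren{X}<1$; then every $\PH_i$ interval has length at most $\text{diam}\paren{X}<1$, so that $g\paren{\epsilon}=0$ for $\epsilon\ge 1$, and $\epsilon^{-\alpha}\ge 1$ whenever $\epsilon<1$ and $\alpha>0$ (a fact the upper bound exploits). Unwinding the definition of $c_i\paren{X}$ gives exactly two statements: for every $\alpha>c_i\paren{X}$ there is an $\epsilon_0\in\paren{0,1}$ with $g\paren{\epsilon}<\epsilon^{-\alpha}$ for all $\epsilon<\epsilon_0$; and for every $\beta<c_i\paren{X}$ there is a sequence $\epsilon_j\rightarrow 0$ with $g\paren{\epsilon_j}>\epsilon_j^{-\beta}$.

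For the upper bound, fix $\alpha>c_i\paren{X}$ and take $\epsilon_0$ as above, so that $g\paren{\epsilon}<\epsilon^{-\alpha}$ for $\epsilon<\epsilon_0$. The only point to check is that the estimate $g\paren{\epsilon}<D\epsilon^{-\alpha}$ persists for \emph{all} $\epsilon>0$, since Lemma~\ref{lemma_upper_new} requires the hypothesis on the whole range. Here I would invoke total boundedness: $N_\delta\paren{X}$ is non-increasing in $\delta$, so for $\epsilon\in\brac{\epsilon_0,1}$ the net $\textbf{x}^\epsilon$ has at most $N_{\epsilon_0/4}\paren{X}=:M<\infty$ points, and any $\textbf{y}\subseteq\textbf{x}^\epsilon$ has at most some finite number $B=B\paren{M}$ of intervals in $\PH_i$ (bounded, e.g., by the number of $i$-simplices on $M$ vertices). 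Since $\epsilon^{-\alpha}\ge 1$ on this range, $g\paren{\epsilon}\le B\le B\epsilon^{-\alpha}$, while $g\paren{\epsilon}=0$ for $\epsilon\ge 1$. Taking $c=\alpha$ and $D=\max\paren{1,B}$, the hypothesis $\abs{I_{i,\epsilon}\paren{\textbf{y}}}<D\epsilon^{-\alpha}$ of Lemma~\ref{lemma_upper_new} holds for all $\epsilon>0$, whence $\text{dim}_{\PH}^i\paren{X}\le\alpha$; letting $\alpha\downarrow c_i\paren{X}$ gives $\text{dim}_{\PH}^i\paren{X}\le c_i\paren{X}$.

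For the lower bound I would argue directly from the definition of $\text{dim}_{\PH}^i$ by producing finite subsets of $X$ on which $E_\alpha^i$ blows up. Fix $\alpha<c_i\paren{X}$, choose $\beta$ with $\alpha<\beta<c_i\paren{X}$, and take $\epsilon_j\rightarrow 0$ with $g\paren{\epsilon_j}>\epsilon_j^{-\beta}$. For each $j$ let $\textbf{y}_j\subseteq\textbf{x}^{\epsilon_j}$ realize the maximum defining $g\paren{\epsilon_j}$, so $\textbf{y}_j$ is a finite subset of $X$ with more than $\epsilon_j^{-\beta}$ intervals of $\PH_i$ of length exceeding $\epsilon_j$. (For a finite point set the \v{C}ech complex is eventually a full simplex and hence contractible, so its reduced homology vanishes for large $\epsilon$ and every such interval is bounded; thus all of them contribute to $E_\alpha^i$.) Since $\alpha>0$, each contributes more than $\epsilon_j^\alpha$, giving
\[E_\alpha^i\paren{\textbf{y}_j}\ge\sum_{I\in I_{i,\epsilon_j}\paren{\textbf{y}_j}}\abs{I}^\alpha>\abs{I_{i,\epsilon_j}\paren{\textbf{y}_j}}\,\epsilon_j^\alpha>\epsilon_j^{\alpha-\beta}\,.\]
As $\alpha-\beta<0$ and $\epsilon_j\rightarrow 0$, the right-hand side tends to infinity, so $E_\alpha^i$ is not uniformly bounded over finite subsets of $X$ and $\text{dim}_{\PH}^i\paren{X}\ge\alpha$. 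Letting $\alpha\uparrow c_i\paren{X}$ yields $\text{dim}_{\PH}^i\paren{X}\ge c_i\paren{X}$, completing the proof.

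I expect the only genuine obstacle to be the bookkeeping in the upper bound at non-small scales: guaranteeing that the single estimate demanded by Lemma~\ref{lemma_upper_new} holds uniformly for all $\epsilon>0$ rather than merely asymptotically as $\epsilon\rightarrow 0$. The rescaling to unit diameter, together with monotonicity and finiteness of $N_\delta\paren{X}$, dispatches this cleanly. Finally, the entire argument goes through verbatim for the Rips complex, since Lemma~\ref{lemma_upper_new} and bottleneck stability were noted to hold in that setting as well.
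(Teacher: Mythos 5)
Your proposal is correct and follows essentially the same route as the paper: the upper bound feeds the estimate $g\paren{\epsilon}<D\epsilon^{-\alpha}$ into Lemma~\ref{lemma_upper_new}, and the lower bound picks $\alpha<\beta<c_i\paren{X}$, extracts $\textbf{y}_j\subseteq\textbf{x}^{\epsilon_j}$ with many long intervals, and shows $E_\alpha^i\paren{\textbf{y}_j}\geq \epsilon_j^{\alpha-\beta}\rightarrow\infty$, exactly as in the paper. Your only addition is the explicit verification (via monotonicity and finiteness of $N_\delta\paren{X}$) that the bound on $g$ holds for all $\epsilon>0$ rather than just asymptotically, a point the paper's proof passes over silently when it asserts the existence of $D_1$; this is a welcome tightening, not a different argument.
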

\begin{proof}
Let $\alpha>c_i\paren{X},$ so there is a $D_1>0$ so that $g\paren{\epsilon}<\epsilon^{-\alpha}$ for all $\epsilon>0.$ By the definition of $g\paren{\epsilon},$ the hypotheses of Lemma~\ref{lemma_upper_new} are satisfied with $c=\alpha.$ Therefore, $\text{dim}_{\PH}^i\paren{X}\leq \alpha$ for all $\beta>c_i\paren{X}$ and $\text{dim}_{\PH}^i\paren{X}\leq c_i\paren{X}.$

Conversely, suppose that $\alpha<c_i\paren{X},$ and let $\alpha<\beta<c_i\paren{X}.$ For $D_2>0$ there exists a sequence $\epsilon_j \rightarrow 0$ so that $g\paren{\epsilon_j}>D_2\epsilon^{-\beta}$ for all $j\in\mathbb{N}.$ For each $j\in\mathbb{N}$ there exists a $\textbf{y}_j\subset \textbf{x}^{\epsilon_j}$ so that  
\[\abs{I_{i,\epsilon_j}\paren{\textbf{y}_j}}>D_2\epsilon_j^{-\beta}\,.\]
Then
\[E_\alpha^i\paren{\textbf{y}_j} \geq  \epsilon_j^\alpha \abs{I_{i,\epsilon_j}\paren{\textbf{y}_j}}\geq D_2 \epsilon_j^{\alpha-\beta}\,,\]
which limits to $\infty$ as $\epsilon_j\rightarrow 0$ because $\alpha<\beta.$ Therefore,  $\text{dim}_{\PH}^i\paren{X}\geq \alpha$ for all $\alpha>c_i\paren{X}$ and $\text{dim}_{\PH}^i\paren{X}= c_i\paren{X}.$
\end{proof}

In our next proposition, we show a relationship between the $\PH_i$ dimension and the upper box dimension.

\begin{Proposition}
\label{prop1}
Let $X$ be a bounded metric space, and let $\phi_X^i\paren{n}$ be the maximal number of $\PH_i$ intervals of a set of $n$ points in $X.$ If  $\phi_X^i\paren{n}= O\paren{n^\lambda}$ then
\[\text{dim}_{\PH}^i\paren{X}\leq \lambda\;\text{dim}_{\text{box}}\paren{X}\,.\]
\end{Proposition}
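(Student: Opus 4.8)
The plan is to reduce the statement to the characterization of $\text{dim}_{\PH}^i\paren{X}$ supplied by Proposition~\ref{prop_upper1}, which says that $\text{dim}_{\PH}^i\paren{X}=c_i\paren{X}=\limsup_{\epsilon\to 0}\frac{\log\paren{g\paren{\epsilon}}}{\log\paren{1/\epsilon}}$, where $g\paren{\epsilon}=\max_{\textbf{y}\subseteq\textbf{x}^\epsilon}\abs{I_{i,\epsilon}\paren{\textbf{y}}}$ and $\textbf{x}^\epsilon$ is the maximal $\epsilon/4$-packing introduced just before that proposition. Everything then comes down to bounding $g\paren{\epsilon}$ using the combinatorial hypothesis $\phi_X^i\paren{n}=O\paren{n^\lambda}$ together with the packing interpretation of the box dimension from Definition~\ref{defn_upperbox}.

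First I would record the two elementary estimates that drive the argument. For any $\textbf{y}\subseteq\textbf{x}^\epsilon$, the quantity $\abs{I_{i,\epsilon}\paren{\textbf{y}}}$ counts only those intervals of $\PH_i\paren{\textbf{y}}$ of length exceeding $\epsilon$, so it is at most the total number of $\PH_i$ intervals of $\textbf{y}$, which by the definition of $\phi_X^i$ is at most $\phi_X^i\paren{\abs{\textbf{y}}}$. Separately, since $\textbf{x}^\epsilon$ is the center set of a maximal collection of disjoint balls of radius $\epsilon/4$, its cardinality is exactly the packing number $N_{\epsilon/4}\paren{X}$, so every $\textbf{y}\subseteq\textbf{x}^\epsilon$ has $\abs{\textbf{y}}\leq N_{\epsilon/4}\paren{X}$. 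Writing $\phi_X^i\paren{n}\leq C n^\lambda$ for a suitable constant $C$ (and using $\lambda\geq 0$), the two estimates combine to give $g\paren{\epsilon}\leq C\,N_{\epsilon/4}\paren{X}^\lambda$ for all $\epsilon>0$.

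It then remains to feed this into the limsup. Taking logarithms,
\[
\frac{\log\paren{g\paren{\epsilon}}}{\log\paren{1/\epsilon}}\leq\frac{\log C+\lambda\log\paren{N_{\epsilon/4}\paren{X}}}{\log\paren{1/\epsilon}}\,.
\]
Substituting $\delta=\epsilon/4$, so that $\log\paren{1/\epsilon}=\log\paren{1/\delta}-\log 4$, the term $\log C/\log\paren{1/\epsilon}$ tends to $0$ and the ratio $\log\paren{1/\delta}/\paren{\log\paren{1/\delta}-\log 4}$ tends to $1$ as $\delta\to 0$; taking $\limsup_{\delta\to 0}$ therefore yields $c_i\paren{X}\leq\lambda\,\limsup_{\delta\to 0}\frac{\log\paren{N_\delta\paren{X}}}{\log\paren{1/\delta}}=\lambda\,\text{dim}_{\text{box}}\paren{X}$. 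Combined with Proposition~\ref{prop_upper1}, this is precisely $\text{dim}_{\PH}^i\paren{X}\leq\lambda\,\text{dim}_{\text{box}}\paren{X}$.

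I expect no serious obstacle once Proposition~\ref{prop_upper1} is available; the only point needing care is the constant-factor rescaling $\delta=\epsilon/4$ in the final limsup, where one must verify that replacing $\log\paren{1/\epsilon}$ by $\log\paren{1/\delta}$ and absorbing the additive $\log C$ leaves the value of the limsup unchanged, which holds because both corrections are lower order in $\log\paren{1/\delta}$. The conceptual heart of the proof is simply the identification $\abs{\textbf{x}^\epsilon}=N_{\epsilon/4}\paren{X}$, which is what links the purely combinatorial growth bound $\phi_X^i$ to the metric packing quantity defining the upper box dimension.
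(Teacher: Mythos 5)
Your proof is correct and takes essentially the same route as the paper's: both reduce to Proposition~\ref{prop_upper1} and bound $g\paren{\epsilon}$ by applying the growth bound on $\phi_X^i$ to $\abs{\textbf{x}^\epsilon}$, which is controlled by the packing number --- the paper merely phrases the final step via an auxiliary exponent $\alpha>\text{dim}_{\text{box}}\paren{X}$ with $\abs{\textbf{x}^\epsilon}<C'\epsilon^{-\alpha}$ instead of your direct logarithmic limsup computation, a cosmetic difference. One small note: $\textbf{x}^\epsilon$ is maximal with respect to inclusion, so in general one only has $\abs{\textbf{x}^\epsilon}\leq N_{\epsilon/4}\paren{X}$ rather than the equality you assert, but this inequality is all your argument actually uses.
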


\begin{proof}
Let $\alpha>\text{dim}_{\text{box}}\paren{X},$ so there exists a $C'>0$ so that
\begin{equation}
\label{eqn_prop1a}
\abs{\textbf{x}^\epsilon}<C'\epsilon^{-\alpha}
\end{equation}
for all $\epsilon>0.$ Also, there is a $D_3>0$ so that  
\begin{equation}
\label{eqn_prop1b}
\phi_{X}^i\paren{n}<D_3 n^\lambda
\end{equation}
 for all $n>0.$ Then, if $\textbf{y}\subseteq\textbf{x}^\epsilon,$ 
\begin{align*}
\abs{I_{i,\epsilon}\paren{\textbf{y}}}\leq & \;\; \abs{PH_i\paren{\textbf{y}}}\\
< & \;\; D_3\abs{\textbf{y}}^\lambda  &&\text{by Eqn.~\ref{eqn_prop1b}}\\
\leq & \;\; D_3\abs{\textbf{x}^\epsilon}^\lambda &&\text{because $\textbf{y}\subseteq \textbf{x}^\epsilon$}\\
< & \;\; D_3\paren{C'  \epsilon^{-\alpha}}^\lambda  &&\text{by Eqn.~\ref{eqn_prop1a}}\\
\leq & \;\;  D_3 C' \epsilon^{-\lambda\alpha}\,,
\end{align*}
and $g\paren{\epsilon}<D_3 C'  \epsilon^{-\lambda\alpha}$ for all $\epsilon>0.$ By the previous lemma,
\begin{align*}
\text{dim}_{\PH}^i\paren{X}=&\;\;\limsup_{\epsilon\rightarrow 0}\frac{\log\paren{g\paren{\epsilon}}}{\log\paren{1/\epsilon}}\\
\leq & \;\; \lim_{\epsilon\rightarrow 0} \frac{\log\paren{D_3 C'  \epsilon^{-\lambda\alpha}}}{\log\paren{1/\epsilon}}\\
= & \;\; \lim_{\epsilon\rightarrow 0} \frac{-\lambda\alpha \log\paren{\epsilon}}{\log\paren{1/\epsilon}}\\
=&\;\; \lambda\alpha
\end{align*}
for all $\alpha>\text{dim}_{\text{box}}\paren{X}.$ Therefore, 
\[\text{dim}_{\PH}^i\paren{X}\leq \lambda\;\text{dim}_{\text{box}}\paren{X}\,.\]
\end{proof}

If $X\subset \mathbb{R}^m$ we can apply the well-known Upper Bound Theorem on the maximal number of simplices of a Delaunay triangulation to bound $\phi_X^i\paren{n}.$
\begin{theorem}[The Upper Bound Theorem~\cite{stanley1975,1970mcmullen}]
The maximum number of simplices of a Delaunay triangulation with $n$ vertices in $\R^m$ is
\[\binom{n-\floor{\frac{m+1}{2}}}{n-m}+\binom{n-\floor{\frac{m+2}{2}}}{n-m}=O\paren{n^{\floor{\frac{m+1}{2}}}}\,.\]
Furthermore, if $0<i<\floor{\frac{m+1}{2}}$ the maximum number of $i$-simplices of a Delaunay triangulation with $n$ vertices in $\R^m$ is
\[f_i\paren{m,n}=\binom{n}{i+1}=O\paren{n^{i+1}}\,.\]
\end{theorem}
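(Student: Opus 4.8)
The plan is to reduce the statement to the classical Upper Bound Theorem for convex polytopes via the standard paraboloid lifting. First I would lift each point $p=(p_1,\dots,p_m)$ of $\textbf{x}$ to the point $\hat{p}=(p_1,\dots,p_m,\abs{p}^2)$ on the paraboloid $x_{m+1}=\abs{x}^2$ in $\R^{m+1}$, and let $P$ be the convex hull of the $n$ lifted points. Because the paraboloid is strictly convex, no lifted point lies in the convex hull of the others, so every $\hat{p}$ is a vertex and, in general position, $P$ is a simplicial $(m+1)$-polytope with exactly $n$ vertices. The geometric content of the lifting is that a point $q\in\R^m$ lies in the open bounded component of the circumsphere of a simplex precisely when $\hat{q}$ lies strictly below the affine hyperplane through the lifted vertices of that simplex; hence the empty-circumsphere condition defining $\text{DT}(\textbf{x})$ matches exactly the lower faces of $P$ (those whose supporting hyperplane has downward-pointing normal). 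In particular $\text{DT}(\textbf{x})$ is combinatorially isomorphic to the lower boundary of $P$, so the number of $i$-simplices of $\text{DT}(\textbf{x})$ is bounded by the number $f_i(P)$ of $i$-faces of $P$.

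Next I would apply the Upper Bound Theorem (McMullen; Stanley): among all simplicial $(m+1)$-polytopes with $n$ vertices, the cyclic polytope $C(m+1,n)$ simultaneously maximizes the number of faces of each dimension, so $f_i(P)\leq f_i(C(m+1,n))$. For the top-dimensional (facet) count this yields exactly the stated expression $\binom{n-\floor{\frac{m+1}{2}}}{n-m}+\binom{n-\floor{\frac{m+2}{2}}}{n-m}$, which is the known facet number of $C(m+1,n)$; since each term, rewritten as $\binom{n-c}{m-c}$, is a polynomial in $n$ of degree $m-c\leq\floor{\frac{m+1}{2}}$, the asymptotic bound $O(n^{\floor{\frac{m+1}{2}}})$ is immediate. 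That the bound is genuinely attained, so that it is the maximum rather than merely an upper bound, would follow by exhibiting a point set whose lifted lower hull realizes these face numbers.

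For the low-dimensional count the upper bound $\binom{n}{i+1}$ is elementary: any $i$-simplex is determined by its $i+1$ vertices, chosen among the $n$ points. The real point is sharpness, which again comes from the cyclic polytope: $C(m+1,n)$ is $\floor{\frac{m+1}{2}}$-\emph{neighborly}, so every set of at most $\floor{\frac{m+1}{2}}$ of its vertices spans a face. Thus for $0<i<\floor{\frac{m+1}{2}}$ every $(i+1)$-subset of the vertices is an $i$-face, giving $f_i(C(m+1,n))=\binom{n}{i+1}=O(n^{i+1})$.

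The only genuinely hard ingredient is the Upper Bound Theorem for polytopes itself, whose proof — via line shellings together with the Dehn--Sommerville relations, or via the Cohen--Macaulay property of the Stanley--Reisner ring — is classical, and which I would cite rather than reprove. Once it is granted, the lifting reduction and the neighborliness computation are routine.
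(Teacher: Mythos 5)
The paper offers no proof of this statement at all: it is quoted as the classical Upper Bound Theorem with citations to McMullen and Stanley, and the sharpness remark after it is attributed to Amenta, Attali, and Devillers. Your proposal reconstructs the standard argument behind those citations---the paraboloid lifting identifying $\text{DT}(\textbf{x})$ with the lower faces of an $(m+1)$-polytope on $n$ vertices, the polytopal Upper Bound Theorem, and the $\lfloor\frac{m+1}{2}\rfloor$-neighborliness of the cyclic polytope---and as a derivation of the upper bounds it is correct and is exactly the route one would expect the cited sources to underwrite.

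Two points deserve flagging. First, your identification of the displayed expression with the facet number of $C(m+1,n)$ does not check out: the facet count of the cyclic $(m+1)$-polytope is $\binom{n-\lceil (m+1)/2\rceil}{n-m-1}+\binom{n-1-\lfloor (m+1)/2\rfloor}{n-m-1}$, which is of order $n^{\lfloor (m+1)/2\rfloor}$, whereas the expression as printed (with lower index $n-m$) is the facet count of an $m$-dimensional cyclic polytope on $n$ vertices and has order only $n^{\lfloor m/2\rfloor}$; your own degree computation, $m-c\leq\lfloor\frac{m+1}{2}\rfloor$ with strict inequality for odd $m$, quietly exposes this mismatch. This appears to be a defect in the paper's statement rather than in your reduction, and only the $O\paren{n^{\lfloor (m+1)/2\rfloor}}$ asymptotic is used downstream (in Corollary~\ref{corollary_upper} via Proposition~\ref{prop1}), so your argument delivers everything the paper actually needs. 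Second, the sharpness claims---that these are maxima, and in particular that $\binom{n}{i+1}$ is attained for $0<i<\lfloor\frac{m+1}{2}\rfloor$---do not follow from neighborliness of the abstract cyclic polytope alone: the Delaunay triangulation sees only the \emph{lower} faces of the lifted polytope, and not every polytope is realizable as the lower hull of points on the paraboloid, so a neighborly face of $C(m+1,n)$ need not correspond to any Delaunay simplex. You acknowledge this gap for the facet count (``would follow by exhibiting a point set\ldots'') but then assert the $i$-face sharpness directly from neighborliness; closing it requires the explicit constructions of the cited work of Amenta, Attali, and Devillers, which should be invoked just as the paper does.
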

The upper bound is sharp, and is acheived by taking the Delaunay triangulation on the vertices of a cyclic polytope~\cite{2006amenta}. We have the following corollary.
\begin{cor}
\label{corollary_upper}
Let $X$ be a bounded subset of $\R^m$ then
\[\text{dim}_{\PH}^i\paren{X}\leq \min\paren{\left(i+1\right),\floor{\frac{m+1}{2}}}\text{dim}_{\text{box}}\paren{X}\,.\]
 In particular, if $m=2$ 
\[\text{dim}_{\PH}^1\paren{X}\leq \text{dim}_{\text{box}}\paren{X}\,.\]
\end{cor}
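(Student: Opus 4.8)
The plan is to estimate the combinatorial growth function $\phi_X^i\paren{n}$ of Proposition~\ref{prop1} and then invoke that proposition with $\lambda=\min\paren{i+1,\floor{\frac{m+1}{2}}}$. The entire argument reduces to counting simplices in a complex on $n$ vertices.

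The key observation is the standard persistence-algebra fact that, in a filtration built by inserting simplices one at a time, each interval of $\PH_i$ is born at a distinct positive $i$-simplex; hence the number of $\PH_i$ intervals never exceeds the number of $i$-simplices appearing in the filtration. Now $\PH_i\paren{\textbf{x}}$ can be computed from two different filtrations on a finite $\textbf{x}\subset\R^m$, and I would exploit both. Computing it directly from the \v{C}ech filtration $C_\epsilon\paren{\textbf{x}}$ bounds the number of intervals by the number of $i$-simplices of the full complex, which is at most $\binom{n}{i+1}=O\paren{n^{i+1}}$, one per $\paren{i+1}$-subset of $\textbf{x}$. Computing it instead from the homotopy-equivalent Alpha filtration $A_\epsilon\paren{\textbf{x}}$, all of whose simplices lie in the Delaunay triangulation $\text{DT}\paren{\textbf{x}}$, bounds the number of intervals by the number of $i$-simplices of $\text{DT}\paren{\textbf{x}}$, which by the Upper Bound Theorem is at most the total simplex count $O\paren{n^{\floor{\frac{m+1}{2}}}}$.

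Taking the better of the two bounds gives $\phi_X^i\paren{n}=O\paren{n^{\min\paren{i+1,\floor{\frac{m+1}{2}}}}}$, and Proposition~\ref{prop1} immediately yields the first displayed inequality. The special case is then pure substitution: for $m=2$ and $i=1$ we have $\floor{\frac{m+1}{2}}=1$ and $\min\paren{2,1}=1$, so $\text{dim}_{\PH}^1\paren{X}\leq\text{dim}_{\text{box}}\paren{X}$.

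The one place needing care, and the only genuine obstacle I foresee, is that the Alpha complex and the clean form of the Upper Bound Theorem are phrased for point sets in general position, whereas $\phi_X^i\paren{n}$ ranges over arbitrary $n$-point subsets of $X$. I expect this to be minor: the generalized Delaunay and Alpha constructions of~\cite{2002edelsbrunner} still live inside a cell complex with $O\paren{n^{\floor{\frac{m+1}{2}}}}$ simplices after triangulation, so the count survives; alternatively, one perturbs $\textbf{x}$ into general position and controls the resulting interval count using bottleneck stability (Corollary~\ref{stableCorollary}). Note also that the $\binom{n}{i+1}$ bound, coming straight from the \v{C}ech complex, needs no general-position hypothesis at all, so only the dimension-halving bound requires this reduction.
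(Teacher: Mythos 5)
Your proposal is correct and follows essentially the same route as the paper: bound $\phi_X^i\paren{n}$ by counting $i$-simplices (via the Alpha/Delaunay complex and the Upper Bound Theorem, with a perturbation-plus-bottleneck-stability reduction to general position, exactly as in the paper's proof) and then invoke Proposition~\ref{prop1} with $\lambda=\min\paren{i+1,\floor{\frac{m+1}{2}}}$. The only variation is that you extract the $\binom{n}{i+1}$ bound directly from the \v{C}ech complex rather than from the Delaunay triangulation --- a harmless refinement, since it sidesteps the general-position issue for that half of the minimum, while the paper gets both bounds at once from the Delaunay simplex counts.
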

\begin{proof}
Let $\textbf{x}$ be a finite subset of $\R^m.$ If necessary, we may perturb the points of $\textbf{x}$ by an amount less than $\delta/2,$ where $\delta$ is the minimum length of a $\PH_i$ interval of $\textbf{x}$ to place the points in general position without decreasing the number of intervals (by Corollary~\ref{stableCorollary}). The number of $\PH_i$ intervals of $\textbf{x}$ is bounded above by the number of $i$-simplices in the Alpha complex on $X,$ which equals the number of $i$-simplices in the Delaunay triangulation. Therefore, by the Upper Bound Theorem,
\[\abs{\PH_i\paren{\textbf{x}}}\leq \binom{n}{i+1}=O\paren{n^{\lambda}}\,,\]
where $\lambda=\min\paren{\left(i+1\right),\floor{\frac{m+1}{2}}}.$ 

It follows that if $X$ is a bounded subset of $\mathbb{R}^m$ then
\[\phi_X^i\paren{n}=O\paren{n^{\lambda}}\,,\]
and the desired result follows from Proposition~\ref{prop1}.
\end{proof}
This completes the proof of the upper bound in Theorem~\ref{mainTheorem}.

\subsection{An Example with Many Intervals}
\label{sec:Arcs}

\begin{figure}
\center
\includegraphics[width=4cm]{% 
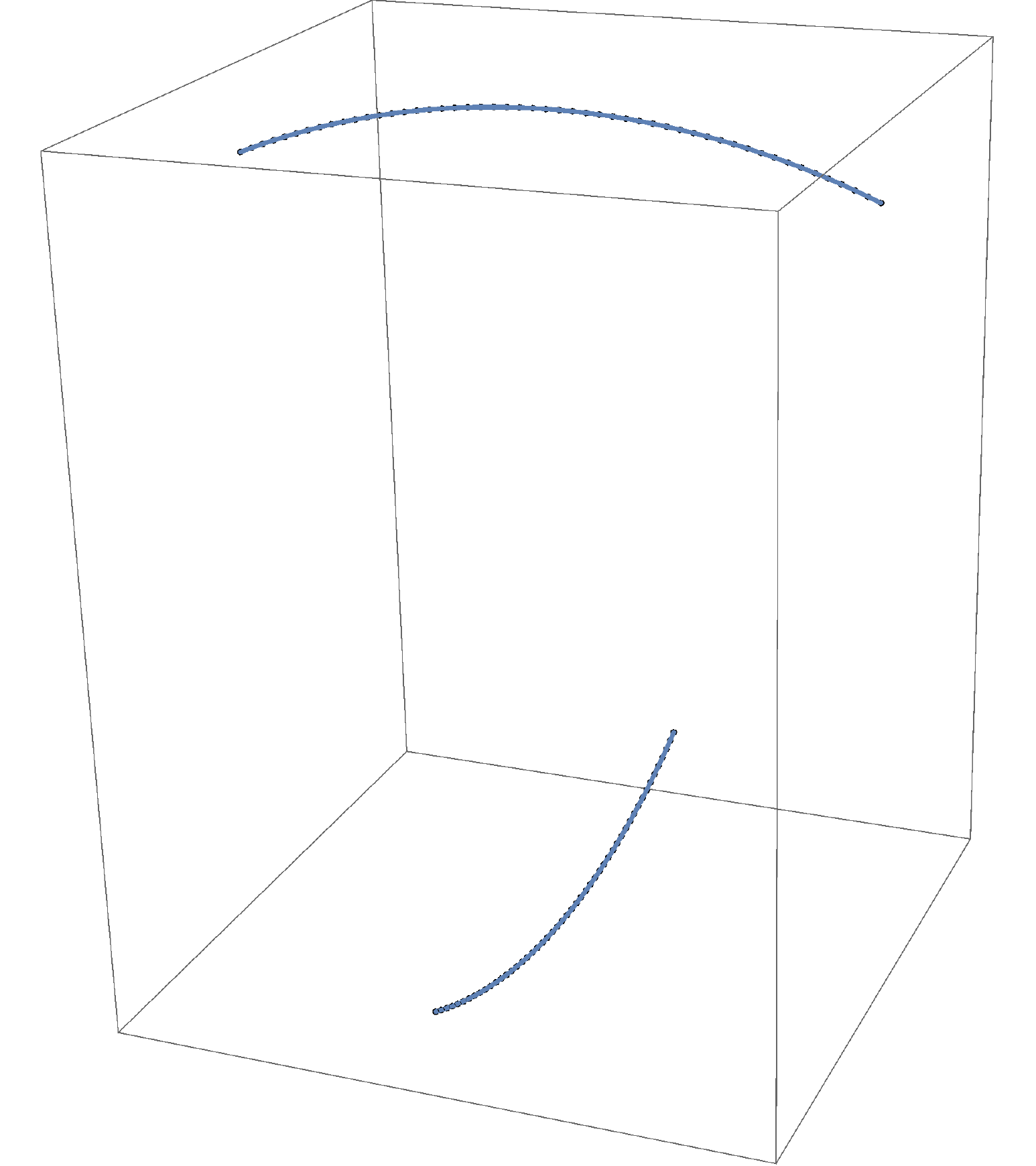}
\caption{\label{fig:arcs}The set $X:$ two opposing arcs from unit circles.}
\end{figure}
We present  an example of a subset of $\R^3$ with subsets of size $n$ that appear to have $\approx n^{1.5}$ $\PH$ intervals. This example was suggested by Herbert Edelsbrunner.

Let $C_1$ and $C_2$ be the following arcs from unit circles in $\R^3:$
\[C_1=\set{\paren{\cos\paren{\theta},\sin\paren{\theta},0}:\theta\in\brac{-\pi/8,\pi/8}}\]
\[C_2=\set{\paren{1-\cos\paren{\theta},0,\sin\paren{\theta}}:\theta\in\brac{-\pi/8,\pi/8}}\]
 and let $X=C_1\cup C_2.$ $X$ is shown in Figure~\ref{fig:arcs}. Let $\textbf{x}_n\subset X$ be a point set obtained by placing $\floor{n/2}$ uniformly spaced points on each of the two circular arcs. Computations indicate that $\abs{\PH_i\paren{\textbf{x}_n}}\approx\abs{\textbf{x}_n}^{1.5}$ for $i=1,2$ (Figure~\ref{fig:IntervalNumber}), but that $E_1^1\paren{\textbf{x}_n}$ and $E_1^2\paren{\textbf{x}_n}$ are bounded as functions of $n$ (Figure~\ref{fig:IntervalSum}). As such, the question remains of whether the $\PH_i$ dimension is bounded above by the upper box dimension for any subset of Euclidean space. This example suggests that it may be necessary to bound the number of ``long'' intervals of a ``well-spaced'' point set rather than the total number of intervals to achieve a sharp result.

\begin{figure}
\center
\subfigure[]{%
	\label{fig:IntervalNumber}{%
		\includegraphics[width=6cm]{%
			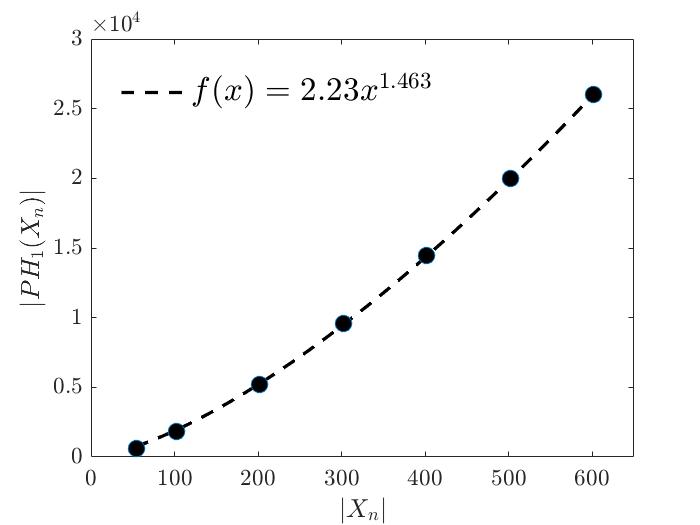}}}
\hspace{20pt}
\subfigure[]{%
	\label{fig:IntervalSum}{%
		\includegraphics[width=6cm]{%
			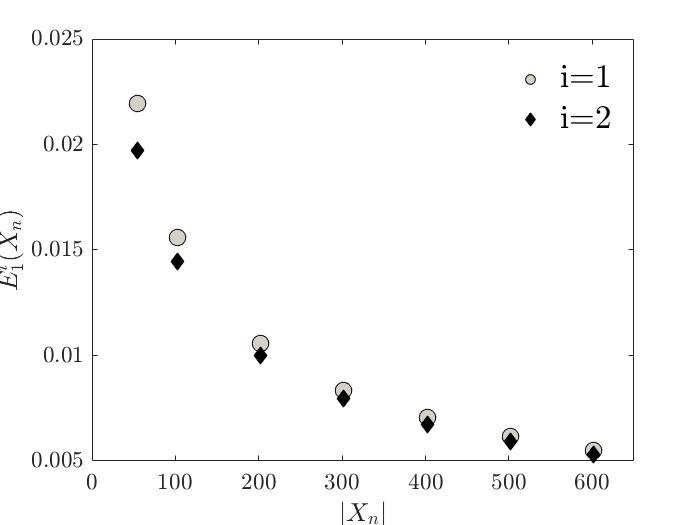}}}
\caption{\label{fig:arcPH}(a) The number of $\PH_1$ intervals of $\textbf{x}_n,$ which appears to grow approximately as $n^{1.5}.$ The data for $\PH_2$ is similar. (b) The quantities $E_1^1\paren{\textbf{x}_n}$ and $E_1^2\paren{X_n},$ which appear to be bounded as a function of $n.$ Persistent homology was computed by calculating the alpha complex with CGAL~\cite{CGAL} and passing the resulting filtration to JPLEX~\cite{jplex}.}
\end{figure}

\section{A Lower Bound}
\label{sec_lower_bound}

In this section, we prove the lower bounds in Theorems~\ref{mainTheorem} and~\ref{thmLower}. However, we first consider the case of a bounded subset of Euclidean space with non-empty interior. Our later arguments will be more complicated, but have the same general outline.

\begin{Proposition}
\label{prop:interior}
If $X$ is a bounded subset of $\mathbb{R}^m$ with non-empty interior then 
\s{\text{dim}_{\PH}^i\paren{X}= m}
for $i=0,\ldots,m-1.$
\end{Proposition}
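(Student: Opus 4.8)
\emph{Plan.} The upper bound $\text{dim}_{\PH}^i\paren{X}\leq m$ is exactly Corollary~\ref{corollary_upper_2}, so the whole content is the matching lower bound $\text{dim}_{\PH}^i\paren{X}\geq m$. Two reductions come first. The $\PH_i$ dimension is monotone under inclusion: if $B\subseteq X$ then every finite subset of $B$ is a finite subset of $X,$ so any exponent $\alpha$ for which $E_\alpha^i$ is uniformly bounded over finite subsets of $X$ is also admissible for $B,$ whence $\text{dim}_{\PH}^i\paren{B}\leq\text{dim}_{\PH}^i\paren{X}.$ Since $X$ has non-empty interior it contains a closed ball $B,$ so it suffices to prove $\text{dim}_{\PH}^i\paren{B}\geq m.$ Next, the case $i=0$ is already settled: the introduction records $\text{dim}_{\PH}^0=\text{dim}_{\text{MST}}=\text{dim}_{\text{box}},$ and a set with non-empty interior in $\R^m$ has upper box dimension $m.$ Thus I may assume $1\leq i\leq m-1.$

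\emph{The gadget.} The idea is to exhibit, for each small scale $\rho,$ a finite point set in $B$ carrying $\gtrsim \rho^{-m}$ persistence intervals of length $\gtrsim \rho.$ Fix once and for all a finite set $P\subset \bar B\paren{0,1/2}\subset \R^{i+1}\subseteq \R^m$ (the embedding is possible precisely because $i\leq m-1$) obtained by sampling a small $i$-sphere finely enough that its \v{C}ech complex $C_\epsilon\paren{P}$ is homotopy equivalent to $S^i,$ with the filtration inclusions inducing isomorphisms on $H_i,$ for all $\epsilon$ in some nondegenerate range $\brac{\epsilon_1,\epsilon_2}$ with $0<\epsilon_1<\epsilon_2<1;$ such a range exists by the nerve lemma together with a standard sampling argument. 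Consequently $\PH_i\paren{P}$ has an interval containing $\brac{\epsilon_1,\epsilon_2},$ and the rescaled, translated copy $c+\rho P$ has a $\PH_i$ interval containing $\brac{\epsilon_1\rho,\epsilon_2\rho},$ of length at least $\paren{\epsilon_2-\epsilon_1}\rho.$

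\emph{Packing and counting.} Place centers $c_1,\ldots,c_M$ on a grid of spacing $3\rho$ inside $B,$ so that $M\geq c'\rho^{-m}$ for a constant $c'>0$ depending only on $B$ and small $\rho,$ and set $\textbf{x}^\rho=\bigcup_{j=1}^M\paren{c_j+\rho P}.$ Each copy lies in $\bar B\paren{c_j,\rho/2},$ so distinct copies are separated by at least $2\rho;$ since $\epsilon_2<1,$ the $\epsilon$-neighborhoods of different copies remain pairwise disjoint for every $\epsilon\leq\epsilon_2\rho.$ Hence on the range $\epsilon\in\brac{\epsilon_1\rho,\epsilon_2\rho}$ the \v{C}ech complex of $\textbf{x}^\rho$ is the disjoint union of the complexes of the individual copies, so $H_i$ is the direct sum of the pieces and $\text{rank}\;i_{\epsilon_1\rho,\epsilon_2\rho}\geq M.$ Because this rank equals the number of $\PH_i$ intervals containing $\brac{\epsilon_1\rho,\epsilon_2\rho},$ the set $\textbf{x}^\rho$ has at least $M\gtrsim\rho^{-m}$ intervals, each of length at least $\paren{\epsilon_2-\epsilon_1}\rho.$ Therefore, for any $\alpha<m,$
\[
E_\alpha^i\paren{\textbf{x}^\rho}\;\geq\; M\,\paren{\paren{\epsilon_2-\epsilon_1}\rho}^\alpha \;\gtrsim\; \rho^{\alpha-m}\;\xrightarrow[\rho\to 0]{}\;\infty,
\]
so no $\alpha<m$ is admissible and $\text{dim}_{\PH}^i\paren{B}\geq m.$ Combined with the two reductions this yields $\text{dim}_{\PH}^i\paren{X}=m.$

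\emph{Main obstacle.} The delicate point is the independence of the gadgets: I need the persistent homology to split as a direct sum over the copies so that their intervals are counted separately rather than merged. This is what the geometric separation buys me — keeping the $\epsilon_2\rho$-neighborhoods disjoint throughout the relevant scale range forces the \v{C}ech complex to be a genuine disjoint union there, after which additivity of $H_i$ (for $i\geq 1$) and the rank--interval correspondence finish the count. The secondary technical input is verifying that a single fixed sample of $S^i$ really does produce a persistent $i$-cycle over a scale window proportional to its diameter; this is routine via the nerve lemma but must be stated with enough care that the window $\brac{\epsilon_1,\epsilon_2}$ is nondegenerate and the constants can be tuned to maintain disjointness.
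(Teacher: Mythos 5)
Your proof is correct, and its skeleton --- a small gadget carrying a guaranteed $\PH_i$ interval at scale $\rho$, packed $\gtrsim\rho^{-m}$ times with enough separation that the \v{C}ech complex splits as a disjoint union on the relevant scale window, after which $E_\alpha^i$ diverges for every $\alpha<m$ --- is exactly the paper's strategy. The difference is the gadget and how it is verified. The paper places the vertex set of a regular simplex of edge length $1/n$ in each cube and computes its single $\PH_i$ interval explicitly in the alpha complex via circumradii, producing concrete constants (the interval length $\tau$ and death time $\hat{d}$) with no topological machinery; the separation $2\hat{d}/n$ then makes the bookkeeping immediate. You instead sample an $i$-sphere and invoke the nerve lemma plus a sampling argument. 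That works, but note the soft spot you yourself flag: homotopy equivalence $C_\epsilon(P)\simeq S^i$ for each fixed $\epsilon$ is not by itself enough --- you need the inclusions $C_{\epsilon_1\rho}(c+\rho P)\hookrightarrow C_{\epsilon_2\rho}(c+\rho P)$ to induce isomorphisms on $H_i$, which requires the persistent (functorial) nerve lemma or, equivalently, the observation that the nearest-point projections of the ball unions onto the sphere commute with the inclusions once all radii are below the reach. With that supplied, your argument is complete; in fact your rank-over-a-window counting (rank of $i_{\epsilon_1\rho,\epsilon_2\rho}$ at least $M$ forces $M$ intervals containing the window, each of length at least $(\epsilon_2-\epsilon_1)\rho$) is slightly cleaner than the paper's interval bookkeeping, since it never needs the gadget intervals to die before the merging scale. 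Your separate dispatch of $i=0$ via the Kozma--Lotker--Stupp theorem and the explicit monotonicity reduction to a ball are harmless variations; the paper treats all $0\leq i\leq m-1$ uniformly with the simplex gadget. The trade-off: your sphere gadget is more robust and generalizes readily, while the paper's simplex gadget is elementary, self-contained, and yields explicit constants.
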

\begin{proof}
The upper bound was shown in Corollary~\ref{corollary_upper_2}.

Let $0\leq i \leq m-1.$ First, we compute the $i$-dimensional persistent homology of the vertex set of the regular $i+1$-simplex $\sigma$ with edge length $\sqrt{2}.$ We may assume that the simplex is formed by the standard basis vectors $e_1,\ldots,e_{i+1}$ in $\mathbb{R}^{i+1}.$ The circumcenter of $\sigma$ is $c\coloneqq \paren{\frac{1}{i+1},\ldots,\frac{1}{i+1}},$ and the circumradius is
\[d\paren{e_j,c}=\sqrt{\paren{1-1/i}^2+\frac{i}{\paren{i+1}^2}}=\sqrt{\frac{i}{i+1}}\,,\]
so $\sigma$ enters the alpha complex $A_{\epsilon}\paren{\sigma}$ when $\epsilon=\sqrt{\frac{i}{i+1}}.$ The same computation shows that the $i$-dimensional faces of $\sigma$ enter  $A_{\epsilon}\paren{\sigma}$ when $\epsilon=\sqrt{\frac{i-1}{i}}=\sqrt{1-\frac{1}{i}}.$ Therefore, $\PH_i\paren{\sigma}$ consists of the single interval $\paren{\sqrt{1-\frac{1}{i}} , \sqrt{\frac{i}{i+1}}}.$

For convenience, let $\tau$ be the length of the single $\PH_1$ interval of the regular $i+1$-simplex with edge length $1,$
\[\tau=\frac{1}{\sqrt{2}}\paren{\sqrt{\frac{i}{i+1}}-\sqrt{1-\frac{1}{i}}}\]
and $\hat{d}$ be the death-time of that interval,
\[\hat{d}=\sqrt{\frac{i}{2i+2}}\,.\]

 $X$ has non-empty interior so we can find an $m$-dimensional cube $C\subset X.$ Rescale $X$ if necessary so that $C$ is a unit cube. Fix $n\in\mathbb{N}$ and sub-divide  $C$ into $n^m$ cubes of width $1/n.$ Let $C_1,\ldots, C_t$ be a maximal sub-collection of these cubes so that 
\begin{equation}
\label{eq:lemmaEuc}
d\paren{C_k,C_l}> 2\frac{\hat{d}}{n}
\end{equation}
 for all $k,l\in\set{1,\ldots,t}$ so that $k\neq l$ (where $d\paren{C_k,C_l}$ is the minimum distance between any pair of points in the two cubes). There is a constant $F$ depending only on $i$ and $m$ so that $t>F n^m$ for all sufficiently large $n.$ 

For $j=1,\ldots, t$ find a regular $i+1$-simplex of edge length $\frac{1}{n}$ inside each cube $C_j.$ Let $\textbf{x}_j$ be the vertices of this simplex, and let $\textbf{z}_n=\cup_{j=1}^{t} \textbf{x}_j.$ By Equation~\ref{eq:lemmaEuc}, the \v{C}ech complex on $\textbf{z}_n$ equals the disjoint union of the \v{C}ech complexes on $\textbf{x}_1,\ldots, \textbf{x}_t$ for any filtration value less than $\frac{\hat{d}}{n}$ (the first time an edge between a point in $\textbf{x}_j$ and one in $\textbf{x}_k$ can enter the complex for $j\neq k).$  $\PH_i\paren{\textbf{x}_j}$ consist of a single interval of length $\frac{1}{n}\tau$ that dies at time $\frac{\hat{d}}{n}.$ It follows that $\PH_i\paren{\textbf{z}_n}$ contains at least $F n^m$ intervals of length $\frac{1}{n}\tau.$ 

Let $\alpha<m,$ and compute
\[E_\alpha^i\paren{\textbf{z}_n}\geq F n^m\paren{\frac{\tau}{n}}^\alpha=F\tau^{\alpha}n^{m-\alpha}\,\]
which limits to $\infty$ as $n\rightarrow\infty,$ because $\alpha<m.$ Therefore, $\text{dim}_{\PH}^i\paren{X}\geq \alpha$ for any $\alpha<m$ and $\text{dim}_{\PH}^i\paren{X}=m,$ as desired.
\end{proof}

In the preceding proof, we could find a cube in Euclidean space for which each sub-cube was occupied, and found point sets with non-trivial homology inside those cubes. If $X$ is a bounded subset of $\mathbb{R}^m,$ and $\text{dim}_{\text{box}}{X}<m,$ most cubes will not be occupied. As such, we need to proceed with care. 

Let $\brac{N}$ denote the integers $1,\ldots,n$ and let $\brac{N}^m\subset \Z^m$ be $\brac{N}\times\brac{N}\times\ldots\times\brac{N}.$ For each $x\in\delta \mathbb{Z}^m,$ let the cube corresponding to $x$ be the cube of width $\delta$ centered at $x.$ The grid of mesh $\delta$ is the set of all cubes in this tessellation.  If $X$ is a bounded subset of $\R^m,$ the upper box dimension of $X$ controls the number of cubes in the grid of mesh $\delta$ that intersect $X.$  Our strategy to prove a lower bound for the $\PH$ dimension of $X$ in terms of the upper box dimension is to show that a sufficiently large collection of points, each in a distinct cube of $\brac{N}^m,$ must contain a subset with non-trivial persistent homology. 

\begin{figure}
\centering  
\subfigure[]{\includegraphics[width=0.2\linewidth]{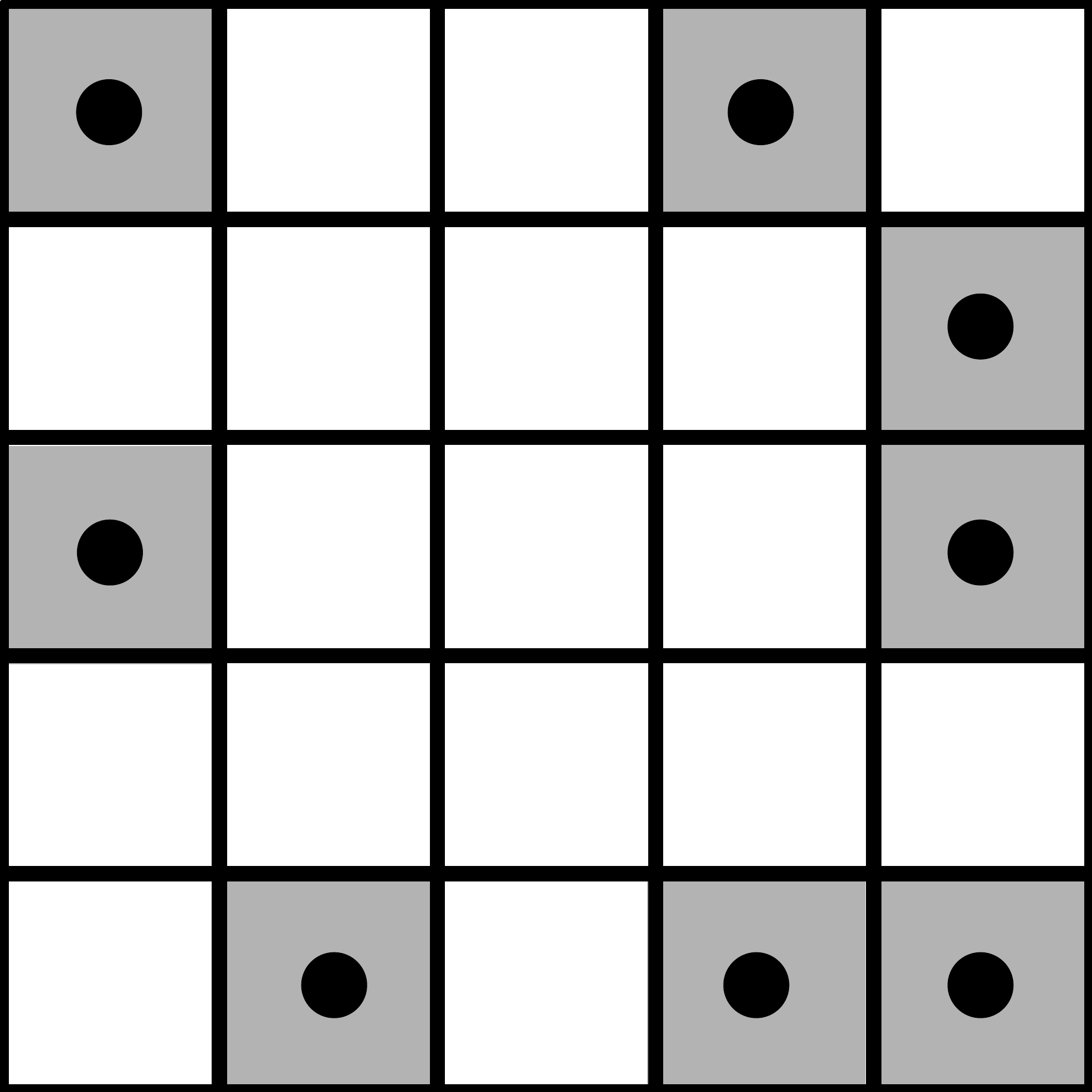}}
\subfigure[]{\includegraphics[width=0.2\linewidth]{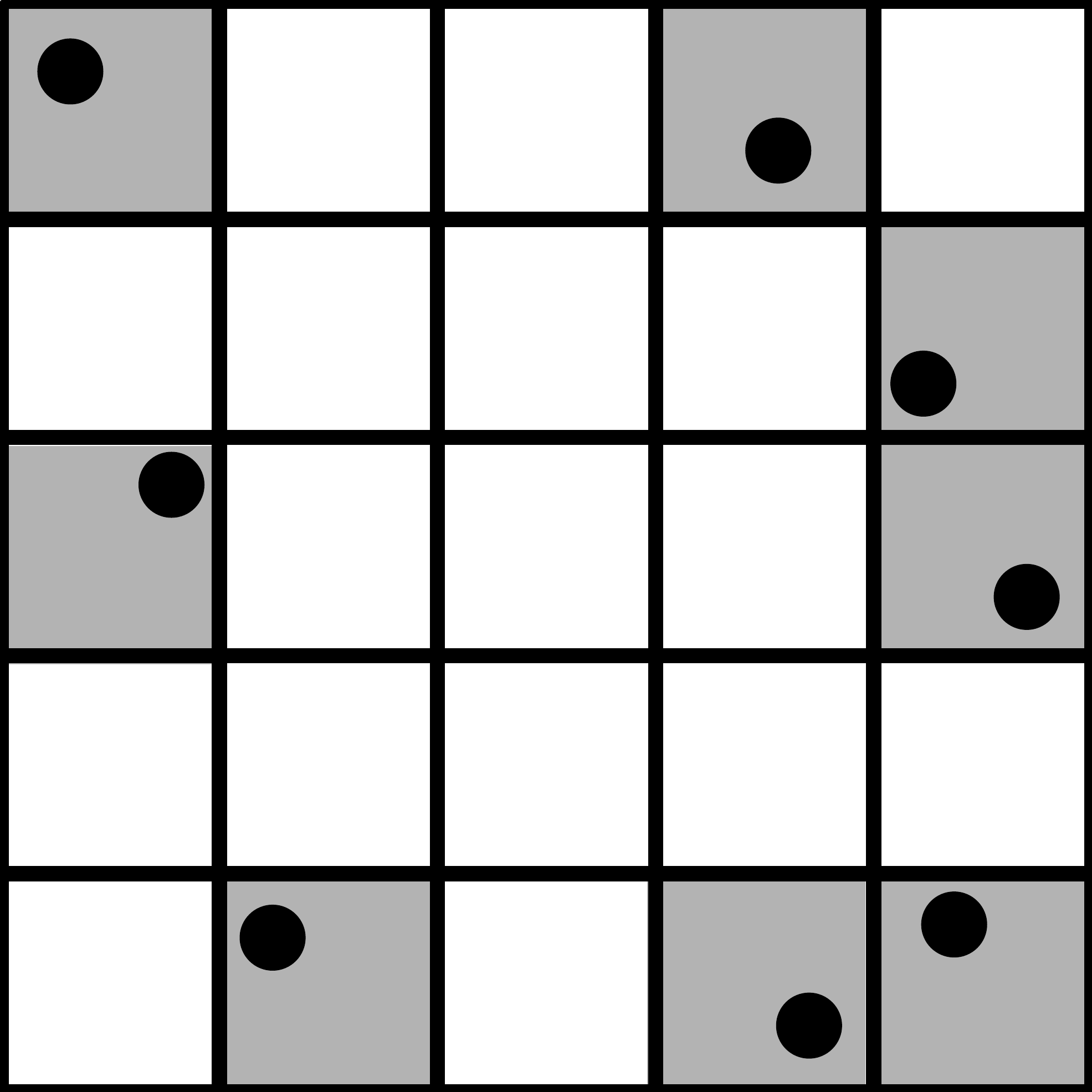}}
\subfigure[]{\includegraphics[width=0.2\linewidth]{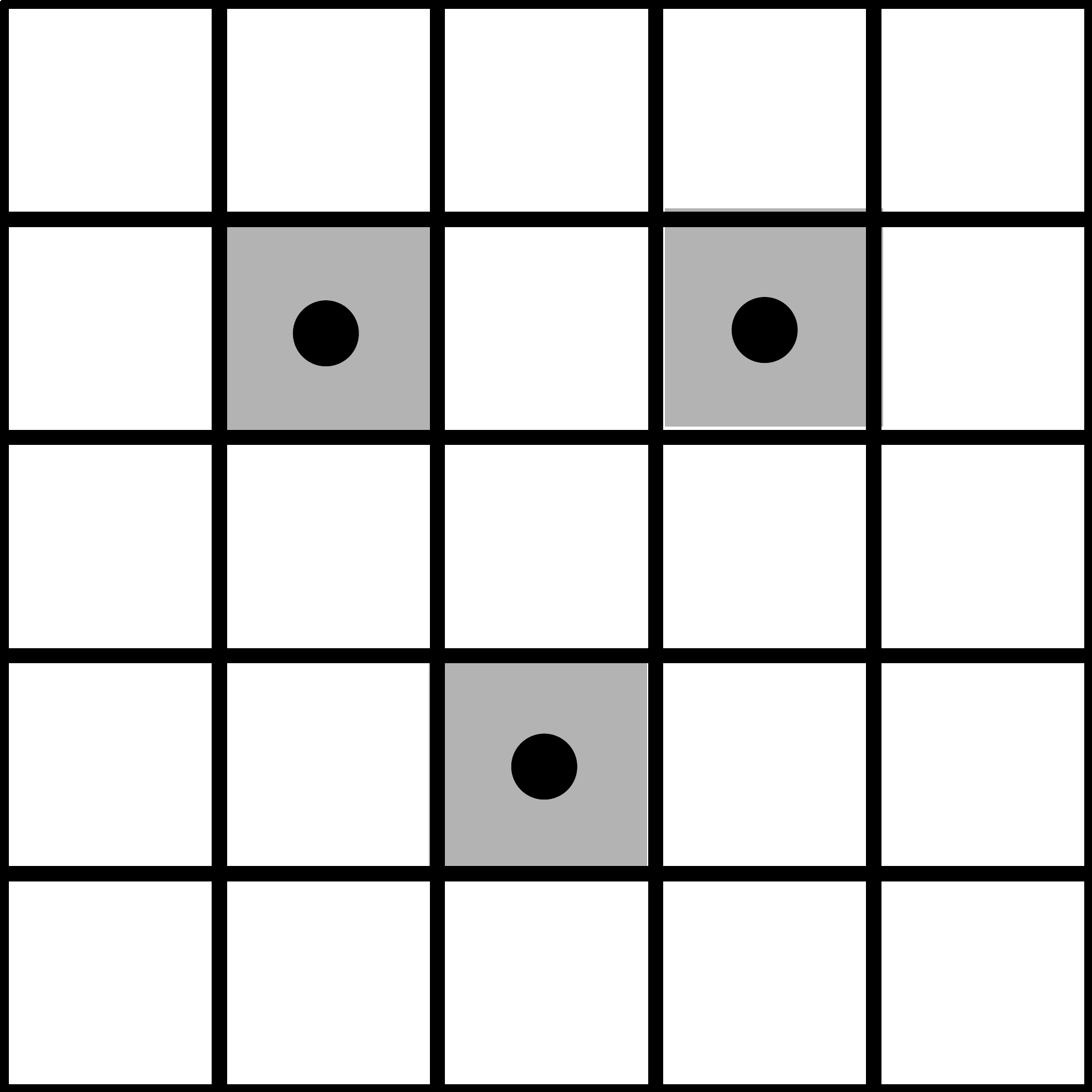}}
\subfigure[]{\includegraphics[width=0.2\linewidth]{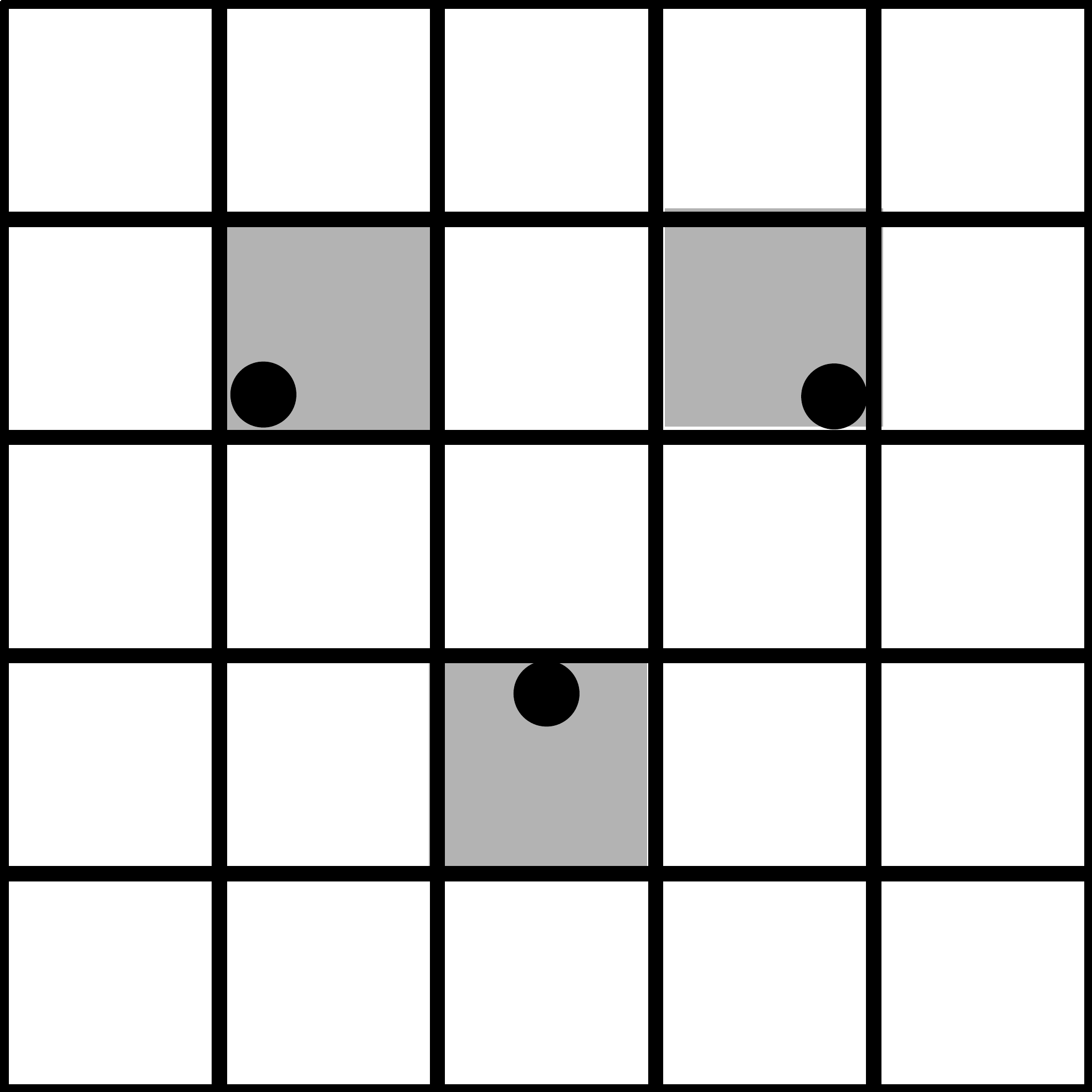}}
	\caption{\label{fig:stable}The $\PH_1$ class of the lattice points corresponding to the gray cubes in (a) and (b) is stable --- 
	any choice of one point in each cube will give the vertices of an acute triangle, and therefore a set with non-trivial $\PH_1.$ The one in (c) and (d) is not, because the points in (d) form an obtuse triangle so the persistent homology of that set is trivial.}
\end{figure}

\begin{Definition}
\label{defn_stable}
$\textbf{x}\subset \Z^m$ has a \textbf{stable} $\PH_i$-class if any point collection $\textbf{y}$ consisting of exactly one point in each cube corresponding to a point of $\textbf{x}$ has a $\PH_i$ interval $I$ so that $\abs{I}\geq c$ for some constant $c>0$ (see Figure~\ref{fig:stable}). The supremal such $c$ is called the \textbf{size} of the stable persistence class.
\end{Definition}

The stability of the bottleneck distance immediately implies the following.
\begin{Proposition}
\label{prop:stable}
If $\textbf{x}\subset \Z^m$ and there is an interval $I\in \PH_i\left(\textbf{x}\right)$ with
\s{\abs{I}>\sqrt{m}+\epsilon\,,}
then $\textbf{x}$ has a stable $\PH_i$-class of size at least $\epsilon.$
\end{Proposition}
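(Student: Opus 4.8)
The plan is to use the stability of the bottleneck distance (Theorem~\ref{thm_stable}), applied to the two point configurations $\textbf{x}$ (the lattice points, viewed as the centers of the cubes) and an arbitrary configuration $\textbf{y}$ consisting of one point selected from each cube corresponding to a point of $\textbf{x}.$ First I would observe that any point of $\textbf{y}$ lies in the cube of width $1$ centered at the corresponding lattice point, so it is within distance $\frac{\sqrt{m}}{2}$ of that center (the half-diagonal of a unit cube in $\R^m$ is $\frac{\sqrt{m}}{2}$). Hence every point of $\textbf{y}$ is within $\frac{\sqrt{m}}{2}$ of a point of $\textbf{x},$ and conversely every point of $\textbf{x}$ is within $\frac{\sqrt{m}}{2}$ of the unique point of $\textbf{y}$ chosen from its cube. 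This gives the Hausdorff bound $d_H\paren{\textbf{x},\textbf{y}}\leq \frac{\sqrt{m}}{2}.$

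Next I would feed this into bottleneck stability. By Theorem~\ref{thm_stable}, $d_B\paren{\PH_i\paren{\textbf{x}},\PH_i\paren{\textbf{y}}}\leq d_H\paren{\textbf{x},\textbf{y}}\leq \frac{\sqrt{m}}{2}.$ Now suppose $\textbf{x}$ has an interval $I=\paren{b,d}\in\PH_i\paren{\textbf{x}}$ with $\abs{I}=d-b>\sqrt{m}+\epsilon.$ In the optimal bottleneck matching $\eta,$ the interval $I$ is matched either to some interval $\hat I=\paren{\hat b,\hat d}\in\PH_i\paren{\textbf{y}}$ with $\abs{b-\hat b}\leq \frac{\sqrt{m}}{2}$ and $\abs{d-\hat d}\leq \frac{\sqrt{m}}{2},$ or to the diagonal. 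If it were matched to the diagonal, the matching cost would be at least $\frac{1}{2}\abs{I}>\frac{1}{2}\paren{\sqrt{m}+\epsilon}>\frac{\sqrt{m}}{2},$ contradicting the bottleneck bound; so $I$ must be matched to a genuine interval $\hat I$ of $\PH_i\paren{\textbf{y}}.$ Then by the reverse triangle inequality $\abs{\hat I}=\hat d-\hat b\geq \paren{d-b}-\abs{d-\hat d}-\abs{b-\hat b}\geq \abs{I}-\sqrt{m}>\epsilon.$ Thus $\PH_i\paren{\textbf{y}}$ contains an interval of length greater than $\epsilon.$

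Since $\textbf{y}$ was an arbitrary choice of one point per cube, this shows every such $\textbf{y}$ has a $\PH_i$ interval of length at least $\epsilon$ (one can pass to a non-strict inequality at the end by taking the supremum over $c,$ matching the definition of \emph{size}). By Definition~\ref{defn_stable}, $\textbf{x}$ therefore has a stable $\PH_i$-class of size at least $\epsilon,$ as claimed.

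The main obstacle — and the only genuinely delicate point — is the handling of the diagonal matching: one must rule out the possibility that $I$ is unmatched (sent to the diagonal) before applying the reverse triangle inequality to the matched interval. This is where the specific numerical threshold $\sqrt{m}+\epsilon$ rather than merely $\sqrt{m}$ is used, since it forces the half-length $\frac{1}{2}\abs{I}$ of $I$ to strictly exceed the Hausdorff bound $\frac{\sqrt{m}}{2}.$ Everything else is a routine geometric estimate (the cube half-diagonal) plus a direct application of Theorem~\ref{thm_stable}, so I would keep those parts brief.
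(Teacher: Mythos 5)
Your proof is correct and follows the same route as the paper: both bound $d_H\paren{\textbf{x},\textbf{y}}$ by $\sqrt{m}/2$ via the half-diagonal of a unit cube and then invoke bottleneck stability (Theorem~\ref{thm_stable}). The paper states the conclusion in two sentences; your only addition is to spell out the matching argument (ruling out the diagonal and applying the reverse triangle inequality), which the paper leaves implicit.
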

\begin{proof}
If $\textbf{x}$ and $\textbf{y}$ are in Definition~\ref{defn_stable}, then $d_H\paren{\textbf{x},\textbf{y}}<\sqrt{m}/2,$ the maximal distance between a point in a unit cube and the center of that cube. By bottleneck stability, $\PH_i\paren{\textbf{y}}$ has an interval of length at least $\epsilon.$
\end{proof}

\begin{Definition}
\label{defn_nontriviality}
Let $\xi_i^m\left(N\right)$ be the size of the largest subset $\textbf{x}$ of $\brac{N}^m$ so that no subset $\textbf{y}$ of $\textbf{x}$ has a stable $\PH_i$-class of size greater than 1. That is,  $\xi_i^m\left(N\right)$ is the smallest integer so that if $\abs{\textbf{x}}> \xi_i^m\left(N\right),$ then there exist a $\textbf{y}\subset \textbf{x}$ with a stable $\PH_i$ class. Define
\[\gamma_i^m=\liminf_{N\rightarrow\infty}\frac{\log\paren{\xi_i^m\paren{N}}}{\log\paren{N}}\,.\]
\end{Definition} 

The following proposition is stated for the \v{C}ech complex and $\gamma_i^m$ is defined for that filtration, but an analogous result holds for the Rips complex (though the corresponding constant $\hat{\gamma}_i^m$ may be different).
\begin{Proposition}
\label{prop:kappaprop}
If $X$ is a bounded subset of $\R^m$ and $\text{dim}_{\text{box}}\paren{X}>\gamma_i^m$
then 
\s{\text{dim}_{\PH}^1\paren{X}\geq \text{dim}_{\text{box}}\paren{X}\,.}
\end{Proposition}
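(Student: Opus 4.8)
The plan is to prove the lower bound $\text{dim}_{\PH}^i\paren{X}\geq\text{dim}_{\text{box}}\paren{X}$ directly from Definition~\ref{defn_main}: it suffices to show that for every $\alpha<\text{dim}_{\text{box}}\paren{X}$ the quantity $E_\alpha^i\paren{\textbf{y}}$ is \emph{not} uniformly bounded over finite $\textbf{y}\subset X$, since then no such $\alpha$ lies in the set whose infimum defines the dimension. Fix $\alpha$ and choose exponents $\beta,\gamma'$ with $\alpha<\beta<\text{dim}_{\text{box}}\paren{X}$ and $\gamma_i^m<\gamma'<\beta$; this is possible precisely because $\gamma_i^m<\text{dim}_{\text{box}}\paren{X}$, which is the hypothesis. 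The overall shape of the argument mirrors Proposition~\ref{prop:interior}, but the occupied cubes now form only a sparse subset of the grid, so the homology must be manufactured from the combinatorial structure encoded by $\gamma_i^m$ rather than placed by hand.

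First I would rescale $X$ into $\brac{0,1}^m$ and fix a mesh $\delta$. By the cube-counting description of the box dimension in Definition~\ref{defn_upperbox}, since $\beta<\text{dim}_{\text{box}}\paren{X}$ there is a sequence $\delta\to 0$ along which the number of mesh-$\delta$ cubes meeting $X$ is at least $\delta^{-\beta}$; scaling by $\delta^{-1}$ identifies the occupied cubes with a set $\textbf{S}\subseteq\brac{N}^m$, where $N=\lceil\delta^{-1}\rceil$ and $\abs{\textbf{S}}\gtrsim N^\beta$. Choosing one point of $X$ in each occupied cube (and passing to a $\delta$-separated subcollection of cubes, losing only a dimensional constant) gives a $\delta$-well-spaced competitor set. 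By Proposition~\ref{prop:stable}, any subset of $\textbf{S}$ carrying a stable $\PH_i$-class of size exceeding $1$ forces a genuine $\PH_i$ interval of length greater than $\delta$ in this point set, so the task reduces to the combinatorial statement that $\textbf{S}$ contains on the order of $\abs{\textbf{S}}$ disjoint stable classes.

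To produce them I would partition $\brac{N}^m$ into blocks of side $L$, taking $L$ on the subsequence realizing the liminf in Definition~\ref{defn_nontriviality} so that $\xi_i^m\paren{L}\leq L^{\gamma'}$. Any block with more than $\xi_i^m\paren{L}$ occupied cubes contains a subset with a stable class by definition of $\xi_i^m$; removing that subset and iterating (a greedy argument) yields disjoint stable classes that cover all but at most $\xi_i^m\paren{L}$ of the occupied cubes of the block. Summing over the $\paren{N/L}^m$ blocks, the uncovered cubes number at most $\paren{N/L}^m\xi_i^m\paren{L}\approx N^m L^{\gamma'-m}$, which drops below $\tfrac12\abs{\textbf{S}}$ as soon as $L$ exceeds a fixed power $N^{\theta}$ with $\theta<1$ (available since $\gamma'<\beta$ forces $\paren{m-\beta}/\paren{m-\gamma'}<1$). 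Thus $\gtrsim N^\beta$ cubes are covered by disjoint stable classes. Separating these classes by more than twice their death scale, exactly as in the disjoint-union argument of Proposition~\ref{prop:interior}, makes the corresponding intervals appear simultaneously in $\PH_i$; if the classes are of bounded size this gives $\gtrsim N^\beta$ intervals of length greater than $\delta$, so that $E_\alpha^i\gtrsim N^\beta\delta^\alpha\approx\delta^{\alpha-\beta}\to\infty$ as $\delta\to 0$, completing the proof.

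I expect two points to carry the real difficulty. The first, and main, obstacle is combinatorial: the greedy procedure controls the total number of cubes covered, not the \emph{number} of disjoint classes, and a single large stable class (for instance a robust cycle winding through many cubes) can absorb many cubes while contributing only one interval. Forcing the number of disjoint classes to be proportional to $\abs{\textbf{S}}$ — equivalently, extracting many classes of bounded size, or else running a weighted estimate in which larger classes are compensated by their proportionally longer intervals — is the crux, and is the source of the difficulty advertised in the introduction; a naive one-class-per-block count loses a factor of $L^m$ and yields only a lower bound strictly below $\text{dim}_{\text{box}}\paren{X}$. The second obstacle is scale matching: the box dimension is a $\limsup$ while $\gamma_i^m$ is a $\liminf$, so the mesh scales at which $\abs{\textbf{S}}\gtrsim N^\beta$ need not coincide with the block sizes at which $\xi_i^m\paren{L}\leq L^{\gamma'}$. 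Here the decoupling of the block size $L$ from the mesh $N$ is essential, and I would use the submultiplicativity $\xi_i^m\paren{KL}\leq K^m\xi_i^m\paren{L}$ — immediate from restricting a stable-class-free subset of $\brac{KL}^m$ to each of its $K^m$ sub-blocks — to propagate control of $\xi_i^m$ from the liminf subsequence to a window of nearby scales, so that a usable block size can always be placed beneath a given box-dimension mesh.
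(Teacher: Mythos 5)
Your setup matches the paper's at the outer layers --- rescaling to a grid, choosing one point of $X$ per occupied cube, invoking Proposition~\ref{prop:stable}, and the final separation/disjoint-union step --- but the proposal has a genuine gap, and it is exactly the one you flag yourself: nothing you write produces a number of \emph{disjoint, spatially separated} stable classes proportional to $N^{\beta}$. Definition~\ref{defn_stable} places no bound on the size of a minimal subset carrying a stable class, and gives no guarantee that a class occupying many cubes has a proportionally long interval, so neither of your suggested rescues (bounded-size classes, or a weighted count) is available as stated. Moreover, even granted many disjoint classes inside a single block, their intervals need not coexist in $\PH_i$ of the union: stability quantifies over point sets with \emph{exactly} one point in each cube of the class and says nothing about robustness to the extra points contributed by neighboring classes, which is why spatial separation (hence a diameter bound on each class) is indispensable to your route and is not supplied by the greedy extraction. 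As written, your count of covered cubes with block side $L\approx N^{\paren{m-\beta}/\paren{m-\gamma'}}$ certifies only the strictly weaker exponent $\beta-m\paren{m-\beta}/\paren{m-\gamma'}$.

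The paper's proof resolves the crux with precisely the idea you discarded: it extracts \emph{one} class per block and defeats the $L^{m}$ loss by making the block exponent arbitrarily small rather than fixed. With $\max\paren{\gamma_i^m,\alpha}<\lambda<d$ and $k_j=\floor{\delta_j^{-p}}$ for a small $p$ depending on $\alpha$, a block of side $k_j\delta_j$ is ``good'' if it contains more than $k_j^{\lambda}$ occupied $\delta_j$-cubes --- a far weaker demand than covering a constant fraction of the occupied cubes. The pigeonhole count of Lemma~\ref{lemma:insidebox} shows that sparse blocks can absorb at most $\abs{\mathcal{B}_{k_j\delta_j}}k_j^{\lambda}$ occupied cubes, a negligible fraction of $\abs{\mathcal{B}_{\delta_j}}$ once $p$ is small, so the good blocks number roughly $\delta_j^{pm-\beta}$ and $\delta_j^{\alpha}\abs{\mathcal{D}^{\lambda}_{\delta_j,k_j}}\rightarrow\infty$. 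One interval of length greater than $\delta_j$ per good block, after thinning to a $2k_j\delta_j\sqrt{m}$-separated subfamily, then gives $E^i_{\alpha}\rightarrow\infty$. So the one-class-per-block count is not doomed to land strictly below the box dimension, as you assert; it falls short only when the block exponent is bounded away from zero, as in your choice, and the loss $pm$ vanishes as $\alpha\uparrow d$. One point in your favor: the submultiplicativity $\xi_i^m\paren{KL}\leq K^{m}\xi_i^m\paren{L}$ is correct (restrict a stable-class-free subset of $\brac{KL}^m$ to its sub-blocks) and genuinely pertinent --- the paper's step ``$\lambda>\gamma_i^m$, so for sufficiently large $k_j$\ldots'' quietly needs $\xi_i^m\paren{k_j}<k_j^{\lambda}$ at the particular scales $k_j$ produced by Lemma~\ref{lemma:insidebox}, even though $\gamma_i^m$ is only a $\liminf$, and a propagation argument of the kind you sketch is a natural way to bridge exactly such scale mismatches.
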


Before proving this proposition, we introduce notation and state a technical lemma related to box-counting. The notation is illustrated in Figure~\ref{fig:boxNotation}. For $\delta\in \mathbb{R},$ let $\mathcal{B}_{\delta}$ be the collection of cubes in the grid of mesh $\delta$ in $\mathbb{R}^m$ that intersect $X.$ Also, if $k\in \mathbb{N},$ consider the larger cubes  $\mathcal{B}_{k\delta}.$ Note that each cube of $\mathcal{B}_{\delta}$ is contained in a unique cube of $\mathcal{B}_{k\delta},$ and each cube of $\mathcal{B}_{k\delta}$ contains at least one cube of $\mathcal{B}_{\delta}.$ If $C\in\mathcal{B}_{k\delta},$ let
\[\mathcal{C}_k\paren{C}=\set{B\in \mathcal{B}_{\delta}:B\subset C}\,.\]

\begin{figure}
\centering
\includegraphics[width=.7\textwidth]{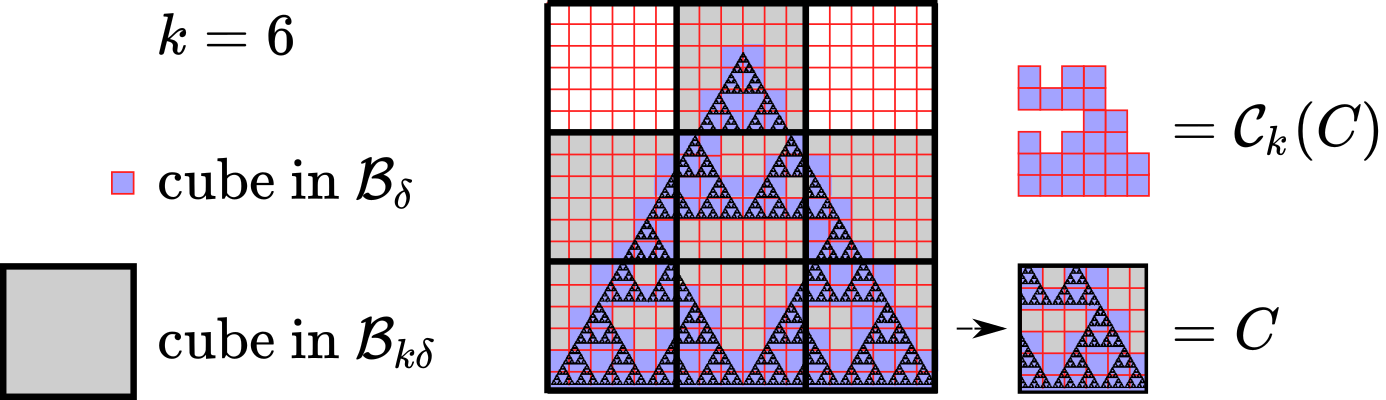}
\caption{The box notation used in the proofs of Proposition~\ref{prop:kappaprop} and Lemma~\ref{lemma:insidebox}.}
\label{fig:boxNotation}
\end{figure}

Furthermore, for $\lambda>0$ define
\[\mathcal{D}^\lambda_{\delta,k}=\set{C\in\mathcal{B}_{k\delta}:\abs{\mathcal{C}_k\paren{C}}>k^{\lambda}}\,.\]

The idea of the proof of Proposition~\ref{prop:kappaprop} is to choose $k$ and $\lambda$ so that we can find a set of points in every cube $\mathcal{D}^\lambda_{\delta,k}$ that has a stable $\PH_i$ class, and show that $\delta^\alpha \abs{\mathcal{D}^\lambda_{\delta,k}}\rightarrow\infty$ for $\alpha<\text{dim}_{\text{box}}\paren{X}.$ This is a little tricky as, the definition of the upper box dimension only gives us weak control over the asymptotics of $\abs{\mathcal{B}_{\delta}}.$ As such, we cannot choose a fixed value of $k$ and must instead let $k$ increase slowly as $\delta\rightarrow 0.$

 \begin{Lemma}
\label{lemma:insidebox}
If $X$ is a bounded subset of $\R^m$ and $\alpha<\lambda<\text{dim}_{\text{box}}\paren{X},$ there exist a sequence of real numbers $\delta_j\rightarrow 0$ and a sequence of natural numbers $k_j\rightarrow \infty$ so that
\[\delta_j^{\alpha} \abs{\mathcal{B}^{\lambda}_{\delta_j,k_j}}\rightarrow\infty\,.\]
\end{Lemma}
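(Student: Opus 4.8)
The plan is to get a lower bound on the number of ``dense'' coarse cubes by a box-counting argument in which the coarsening factor $k$ grows slowly as the mesh $\delta$ shrinks. Write $d = \text{dim}_{\text{box}}(X)$. Since $|\mathcal{B}_\eta|$ is one of the equivalent box-counting functions for $X$, for every $\alpha'' > d$ there is a constant $C_1$ with $|\mathcal{B}_\eta| < C_1\,\eta^{-\alpha''}$ for all $\eta > 0$, while for every $\beta < d$ there is a sequence $\delta_j \to 0$ along which $|\mathcal{B}_{\delta_j}| > \delta_j^{-\beta}$.

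First I would fix the auxiliary exponents. Choose $\beta \in (\lambda, d)$ and $\alpha'' > d$ close enough to $d$ that
\[ \frac{\alpha'' - \beta}{\alpha'' - \lambda} < \frac{\beta - \alpha}{m}; \]
this is possible because as $\beta \uparrow d$ and $\alpha'' \downarrow d$ the left-hand side tends to $0$ while the right-hand side tends to $(d-\alpha)/m > 0$. Pick $\theta$ strictly between these two quantities, let $\delta_j \to 0$ be the sequence supplied by the lower bound for this $\beta$, and set $k_j = \lceil \delta_j^{-\theta} \rceil$, so that $k_j \to \infty$ and $k_j \asymp \delta_j^{-\theta}$.

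The heart of the argument is to partition the cubes of $\mathcal{B}_\delta$ according to the coarse cube of $\mathcal{B}_{k\delta}$ containing them: since this assignment is well-defined, $|\mathcal{B}_\delta| = \sum_{C \in \mathcal{B}_{k\delta}} |\mathcal{C}_k(C)|$. Each coarse cube outside $\mathcal{D}^\lambda_{\delta,k}$ (the set written $\mathcal{B}^\lambda_{\delta,k}$ in the statement) contributes at most $k^\lambda$ to this sum, so the ``sparse'' part is at most $|\mathcal{B}_{k\delta}|\,k^\lambda \leq C_1\,k^{\lambda - \alpha''}\delta^{-\alpha''}$ by the upper box bound. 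At $(\delta_j, k_j)$ this sparse contribution is $O\bigl(\delta_j^{\,\theta(\alpha'' - \lambda) - \alpha''}\bigr)$, whose order as a power of $\delta_j^{-1}$ is $\alpha'' - \theta(\alpha'' - \lambda)$, and this is strictly less than $\beta$ precisely because $\theta > (\alpha'' - \beta)/(\alpha'' - \lambda)$. Hence the sparse part is $o(\delta_j^{-\beta}) = o(|\mathcal{B}_{\delta_j}|)$, so for all large $j$ the cubes of $\mathcal{D}^\lambda_{\delta_j, k_j}$ jointly contain at least $\tfrac{1}{2}|\mathcal{B}_{\delta_j}| > \tfrac{1}{2}\delta_j^{-\beta}$ of the cubes of $\mathcal{B}_{\delta_j}$.

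To conclude, each coarse cube contains at most $k_j^m$ cubes of $\mathcal{B}_{\delta_j}$, so $|\mathcal{D}^\lambda_{\delta_j, k_j}| \geq \tfrac{1}{2 k_j^m}\,\delta_j^{-\beta}$ and therefore $\delta_j^\alpha\,|\mathcal{D}^\lambda_{\delta_j, k_j}| \gtrsim \delta_j^{\,\alpha - \beta + \theta m}$, which tends to $\infty$ since $\theta < (\beta - \alpha)/m$ makes the exponent $\beta - \alpha - \theta m$ (as a power of $\delta_j^{-1}$) strictly positive. I expect the main obstacle to be exactly the tension noted before the statement: because $\alpha'' > d > \beta$, for a fixed coarsening $k$ the sparse bound $\delta^{-\alpha''}$ would swamp the only available lower bound $\delta^{-\beta}$ on $|\mathcal{B}_{\delta_j}|$, so no constant $k$ works. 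Letting $k_j \asymp \delta_j^{-\theta}$ grow damps the sparse term through the factor $k_j^{\lambda - \alpha''}$, and the whole argument rests on verifying that the admissible interval $\left( \tfrac{\alpha'' - \beta}{\alpha'' - \lambda},\, \tfrac{\beta - \alpha}{m}\right)$ for $\theta$ is nonempty, which is secured by driving $\beta$ and $\alpha''$ toward $d$.
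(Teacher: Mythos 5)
Your proof is correct and takes essentially the same route as the paper's: you bracket $d=\text{dim}_{\text{box}}(X)$ with box-counting exponents $\alpha''>d$ and $\beta<d$ (playing the roles of the paper's $\beta=d+\epsilon/2$ and $\beta-\epsilon$), let the coarsening factor grow polynomially as $k_j\asymp\delta_j^{-\theta}$ (the paper's $k_j=\lfloor\delta_j^{-p}\rfloor$), and bound the sparse coarse cubes' contribution by $\abs{\mathcal{B}_{k\delta}}k^{\lambda}$ so that the cubes of $\mathcal{D}^{\lambda}_{\delta_j,k_j}$ must carry a constant fraction of $\abs{\mathcal{B}_{\delta_j}}$. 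Your two constraints on $\theta$ are precisely the paper's Equations~(\ref{eqn_a2}) and~(\ref{eqn_a1}) on $p$ and $\epsilon$, so the differences are purely organizational (your sparse/dense split versus the paper's explicit solution for the threshold integer $a$).
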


We prove Proposition~\ref{prop:kappaprop} first, then return to the proof of Lemma~\ref{lemma:insidebox}.

\begin{proof}[Proof of Propostion~\ref{prop:kappaprop}]
Let $d=\text{dim}_{\text{box}}\paren{X}$ and $\alpha<d.$ We will find finite point collections $\textbf{z}_j\subset X$ so that $E^i_{\alpha}\paren{\textbf{z}_j}\rightarrow\infty$ as $j\rightarrow\infty.$  Choose $\lambda$ so that 
\[\text{max}\paren{\gamma_i^m,\alpha}<\lambda<d\,.\]
By the previous lemma, there are sequences of real numbers $\delta_j\rightarrow 0$ and $k_j\rightarrow \infty$ so that
\[\delta_j^{\alpha} \abs{\mathcal{D}^{\lambda}_{\delta_j,k_j}}\rightarrow\infty\]
Each cube $C\in \mathcal{D}^{\lambda}_{\delta_j,k_j}$ contains at least $k_j^{\lambda}$ sub-cubes of width $\delta_j$ that intersect $X.$ $\lambda>\gamma_i^m,$ so for sufficiently large $k_j$ we can find a finite point set $\textbf{x}\paren{C}\in X\cap C$ so that $\PH_i\paren{\textbf{x}\paren{C}}$ has an interval of length at least $\delta_j.$ To combine the contributions of multiple points sets $\textbf{x}\paren{C}$, we must thin the collection $\mathcal{D}^{\lambda}_{\delta_j,k_j}.$ 

Let $\set{C_1,\ldots,C_t}$ be a maximal collection of cubes in  $\mathcal{D}^{\lambda}_{\delta_j,k_j}$ so that $d\paren{C_k,C_l}> 2 k_j\delta_j \sqrt{m}$ for all $k,l\in\set{1,\ldots,t}$ so that $k\neq l$ (where $d\paren{C_k,C_l}$ is the minimum distance between any pair of points in the two cubes). There is a constant $F$ depending only on the ambient dimension $m$ so that $t\geq F \abs{\mathcal{D}^{\lambda}_{\delta_j,k_j}}$ for sufficiently large $j.$  

Let $\textbf{z}_j=\cup_{k=1}^t\textbf{x}\paren{C_k}.$ By construction, the \v{C}ech complex on $\textbf{z}_j$ equals the disjoint union of the \v{C}ech complexes  on $\textbf{x}\paren{C_1},\ldots, \textbf{x}\paren{C_t}$ for any filtration value less than $k_j\delta_j\sqrt{m}.$ Furthermore, the diameter of each set $\textbf{x}\paren{C_k}$ is less than $k_j\delta_j\sqrt{m}$ so any interval $(b_0,d_0)\in \PH_i\paren{\textbf{x}\paren{C_k}}$ satisfies $d_0<k_j\delta_j\sqrt{m}.$ Therefore, as sets of intervals, we have that $\cup_k PH_i\paren{ \textbf{x}\paren{C_k}}\subseteq \PH_i\paren{\textbf{z}_j}.$ It follows that $\PH_i\paren{\textbf{z}_j}$ contains at $t$ intervals of length greater than $\delta_j.$

Therefore,
\[E_{\alpha}^i\paren{\textbf{z}_j}\geq t \delta_j^{\alpha} \geq  F \abs{\mathcal{D}^{\lambda}_{\delta_j,k_j}} \delta_j^{\alpha}\,,\]
which goes to $\infty$ as $j\rightarrow \infty$ by the previous lemma. Thus $\text{dim}_{\PH}^i\paren{X}\geq\alpha$ for all $\alpha < d,$ and $\text{dim}_{\PH}^i\paren{X}\geq d$  as desired.
\end{proof}

\begin{proof}[Proof of Lemma~\ref{lemma:insidebox}]
Let $d=\text{dim}_{\text{box}}\paren{X},$ and choose $0<p<1$ so that $d-pm>\alpha.$ Also, choose
\[\epsilon<\text{min}\paren{2\paren{d-pm-\alpha},pd-p\lambda}\,.\]
 Let $\beta=d+\epsilon/2,$ and note for future reference that 
\begin{equation}
\label{eqn_a1}
\alpha+pm-\beta+\epsilon=\alpha+pm-d+\epsilon/2 <0
\end{equation}
and 
\begin{equation}
\label{eqn_a2}
p\beta-p\lambda-\epsilon=pd-p\lambda -\paren{1-p/2}\epsilon > pd-p\lambda-\epsilon>0\,.
\end{equation}

$\beta>d$ so there exists a $\delta_0>0$ so that $\abs{B_\delta}<\delta^{-\beta}$ for all $\delta<\delta_0.$ Also, $\beta-\epsilon<d$ so there exists a sequence $\delta_j\rightarrow 0$ so that $\abs{B_{\delta_j}}>\delta_j^{-\beta+\epsilon}$ for all $j\in\N.$ We may assume that $\delta_j<\delta_0$ for all $j.$ Let 
\begin{equation}
\label{eqn_a3}
k_j=\floor{\delta_j^{-p}}\,.
\end{equation}

A cube in $\mathcal{B}_{k_j\delta_j}$ can contain at most $k_j^m$ cubes in $\mathcal{B}_{\delta_j},$ and every cube in $\mathcal{B}_{\delta_j}$ is contained in exactly one cube $\mathcal{B}_{k_j\delta_j}.$ It follows  that $\mathcal{D}^{\lambda}_{\delta_j,k_j}$ is greater than or equal to the smallest integer $a$ so that
\[a k_j^m + \paren{\abs{\mathcal{B}_{k_j\delta_j}}-a}k_j^{\lambda}\geq \abs{B_{\delta_j}}\,.\]
Rearranging terms, we have that
\begin{align*}
a\geq & \;\; \frac{\abs{B_{\delta_j}}-\abs{B_{k_j\delta_j}}k_j^{\lambda}}{k_j^m-k_j^{\lambda}} \\
 =  &\;\;  \frac{\abs{B_{\delta_j}}k_j^{-\lambda}-\abs{B_{k_j\delta_j}}}{k_j^{m-\lambda}-1} \\
\geq  & \;\; \frac{\delta_j^{-\beta+\epsilon}k_j^{-\lambda}-k_j^{-\beta}\delta_j^{-\beta}}{k_j^{m-\lambda}-1} \\
 =  & \;\;  \frac{\delta_j^{-\beta+\epsilon}\floor{\delta_j^{-p}}^{-\lambda}-\floor{\delta_j^{-p}}^{-\beta}\delta_j^{-\beta}}{\floor{\delta_j^{-p}}^{m-\lambda}-1} &&\text{by Eqn.~\ref{eqn_a3}} \\
 \approx  & \;\; \frac{\delta_j^{p\lambda-\beta+\epsilon}-\delta_j^{p\beta-\beta}}{\delta^{p\lambda-pm}-1} &&\text{as $\delta_j\rightarrow 0$}\\
 \approx & \;\; \delta_j^{pm-\beta+\epsilon}\,,\
\end{align*}
where we used that $p\lambda-\beta+\epsilon>p\beta-\beta$ by Equation~\ref{eqn_a2}.

 Choose a $J$ so that if $j>J,$
\[\abs{\mathcal{D}^{\lambda}_{\delta_j,k_j}} \geq  \frac{1}{2}  \delta_j^{pm-\beta+\epsilon}\,.\] Then, if $j>J,$ 

\[\delta_j^{\alpha} \abs{\mathcal{D}^{\lambda}_{\delta_j,k_j}}  \geq  \frac{1}{2}  \delta_j^{\alpha} \delta_j^{pm-\beta+\epsilon} = \frac{1}{2}\delta_j^{\alpha+pm-\beta+\epsilon}\,,\]
which by equation Equation~\ref{eqn_a1}, limits to $\infty$ as $j\rightarrow \infty.$
\end{proof}

\subsection{A Bound for $\gamma_1^m$}
We prove an upper bound for $\gamma_1^m$ by considering the contribution of small triangles to the persistent homology of a subset of $\Z^m.$ Three points in Euclidean space give rise to a $\PH_1$-class if and only if they are the vertices of an acute triangle, in which case the total persistence (death minus birth) equals the circumradius minus half the length of its longest edge.

\begin{Proposition}
\label{triangleLemma}
\[\gamma_1^m\geq m-1/2\,.\]
\end{Proposition}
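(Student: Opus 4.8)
The plan is to establish this as a \emph{lower} bound on $\gamma_1^m$, which by Definition~\ref{defn_nontriviality} amounts to a \emph{construction}: for each $N$ I would exhibit a subset $\textbf{x}_N\subseteq\brac{N}^m$ that is as large as possible subject to having no subset with a stable $\PH_1$-class of size greater than $1$. Since $\xi_1^m\paren{N}\geq\abs{\textbf{x}_N}$, a family with $\abs{\textbf{x}_N}\gtrsim N^{m-1/2}$ gives $\gamma_1^m\geq m-1/2$. In contrast to the forcing statements behind the upper bounds, the content here is in producing large point sets that carry \emph{no} nontrivial stable persistence.

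First I would reduce the stability condition to a metric condition on the chosen points themselves. If every subset $S\subseteq\textbf{x}$ has all of its $\PH_1$ intervals of length at most $1$, then taking the cube centers as representatives witnesses that no subset has a stable $\PH_1$-class of size greater than $1$, so $\textbf{x}$ is admissible for $\xi_1^m\paren{N}$. By the characterization recalled before the statement --- three points carry a $\PH_1$-class exactly when they span an acute triangle, with persistence equal to $R\paren{1-\sin\theta}$, where $R$ is the circumradius and $\theta$ the largest angle --- this reduces, at the level of triples, to forbidding acute triangles with $R\paren{1-\sin\theta}>1$. Since $1-\sin\theta$ is bounded below for robustly acute triangles, such triangles have circumradius bounded below by a constant, so the requirement is that $\textbf{x}$ contain no large, well-proportioned acute triangle. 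One subtlety to dispatch is that a long interval could be supported on four or more points; I would handle this by observing that a $\PH_1$ interval of length exceeding $1$ forces an empty circle of radius $\gtrsim 1$ meeting the set in a spread-out triple, recovering a controlling acute triangle.

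With the reduction in hand, the construction proceeds by a multiscale scheme. As a warm-up, the integer points within bounded distance of a single line already avoid large fat triangles and number $\gtrsim N$, giving $\gamma_1^m\geq 1$. To reach the exponent $m-1/2$ one must pack many such nearly one-dimensional pieces at every scale $s\in\set{1,2,4,\ldots,N}$ so that any triple spanning scale $s$ stays nearly right or obtuse, and then recurse inside each piece. I would arrange the pieces at each scale according to a configuration whose triples are robustly non-acute, choosing the separation ratio so that the box dimension of the limiting self-similar set equals exactly $m-1/2$; restriction to $\brac{N}^m$ then yields the desired $\gtrsim N^{m-1/2}$ lattice points.

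The main obstacle is precisely this last step: designing a scale-by-scale configuration that simultaneously keeps every spanning triple nearly right --- so that persistence never exceeds $1$ and the reduction applies --- while remaining dense enough that the accumulated dimension reaches $m-1/2$ rather than the value $1$ coming from the naive near-linear packing. This is a genuine extremal problem in combinatorial geometry, and matching the exponent that appears on the upper-bound side (where cyclic polytopes are the extremal examples for the Upper Bound Theorem) is the delicate point; verifying both that the candidate construction attains the exponent and that no stable class hides in its higher-order cycles is where the real effort lies.
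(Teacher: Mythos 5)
Your proposal proves the wrong direction of the inequality, and the fault is only partly yours: the inequality as printed is a typo. The paper's own proof opens with ``We will prove that $\xi_1^m\paren{N}\in O\paren{N^{m-1/2}}$,'' which is an \emph{upper} bound on $\xi_1^m$ and hence gives $\gamma_1^m\leq m-1/2$; and that is the direction the paper needs, since Proposition~\ref{prop:kappaprop} has the hypothesis $\text{dim}_{\text{box}}\paren{X}>\gamma_1^m$, so deducing Theorems~\ref{mainTheorem} and~\ref{thmLower} from the hypothesis $\text{dim}_{\text{box}}\paren{X}>m-1/2$ requires $\gamma_1^m$ to be bounded \emph{above} by $m-1/2$. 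You took the printed $\geq$ at face value and organized everything around constructing large subsets of $\brac{N}^m$ carrying no stable class, i.e.\ a lower bound on $\xi_1^m\paren{N}$. That is the opposite extremal problem: it is a forcing statement, not a construction, that feeds the main theorems. Concretely, the paper shows that if $\abs{X}>8cN^{m-1/2}+4N^{m-1}$ with $c>\sqrt{4\sqrt{m}+4}$, then after passing to a two-dimensional slice $S$ of at least the ambient density and trimming $O\paren{c\sqrt{N}}$ extremal points from each row and column, some $y\in X\cap S$ has roughly $c\sqrt{N}$ points of $X$ on all four sides; one vertex of the resulting quadrilateral $q_1,\ldots,q_4$ is non-obtuse, and replacing it with a farther collinear point $p_1$ produces an acute triangle whose persistence (circumradius minus half the longest edge) exceeds $1+\sqrt{m}$ by the $TP_1$/$TP_2$ minimizations of Appendix~\ref{sec:triangleComputations}; Proposition~\ref{prop:stable} then yields the stable class.

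Even judged on its own terms, the proposal has two genuine gaps. First, the construction is never produced: you explicitly defer the multiscale scheme as the ``main obstacle,'' so no bound on $\xi_1^m\paren{N}$ is actually established --- this is a plan, not a proof. Second, your reduction of admissibility to forbidding fat acute triples is false. Consider the four vertices of an axis-aligned square of side $s$: its \v{C}ech $\PH_1$ consists of a single interval $\paren{s/2,\,s/\sqrt{2}}$ of length $\paren{1/\sqrt{2}-1/2}s$, yet every triple among the four vertices is a right triangle with trivial $\PH_1$ (birth and death both equal half the hypotenuse). For $s$ large this interval is much longer than $1+\sqrt{m}$, so by Proposition~\ref{prop:stable} the four cubes support a stable $\PH_1$-class of size greater than $1$, with no controlling acute triple of any scale. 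Hence your claim that a long interval ``forces an empty circle meeting the set in a spread-out triple'' fails exactly where it matters, and any certificate that a candidate set is stable-class-free would have to control cycles supported on arbitrarily many points, not just triangles --- a substantially harder verification than the one you sketch.
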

\begin{proof}
We will prove that $\xi_1^m\paren{N} \in O\paren{N^{m-1/2}}$ by showing that if $X\subset \brac{N}^m,$ 
\[X\subset\brac{N}^m,\qquad c>\sqrt{4\sqrt{m}+4}, \qquad \abs{X}> 8c N^{m-1/2}+4N^{m-1},\]
 and $N$ is sufficiently large then there exist three points $p_1, q_2,$ and $q_3$ of $X$ so that $\PH_1\paren{\set{p_1,q_2,q_3}}$ contains an interval of length at least $1+\sqrt{m}.$ The desired result will then follow from Proposition~\ref{prop:stable}.

 First, we reduce the problem to a two-dimensional one. If $m>2,$ we may find a two-dimensional slice $S$ of $\brac{N}^m$ of the form $\brac{N}^2 \times w,$ so that the density of $X$ in $S$ is greater than or equal to the density of $X$ in $\brac{N}^m.$ That is,
\begin{equation}
\label{eqn_tri1}
\abs{X\cap S}\geq N^2\frac{\abs{X}}{N^m} > 8cN^{1.5}+4N\,.
\end{equation}
We will treat $S$ as a two-dimensional grid of $m$-dimensional cubes, with $N$ rows and $N$ columns, as in Figure~\ref{fig_half}.

\begin{figure}
\center
\includegraphics[width=150pt]{% 
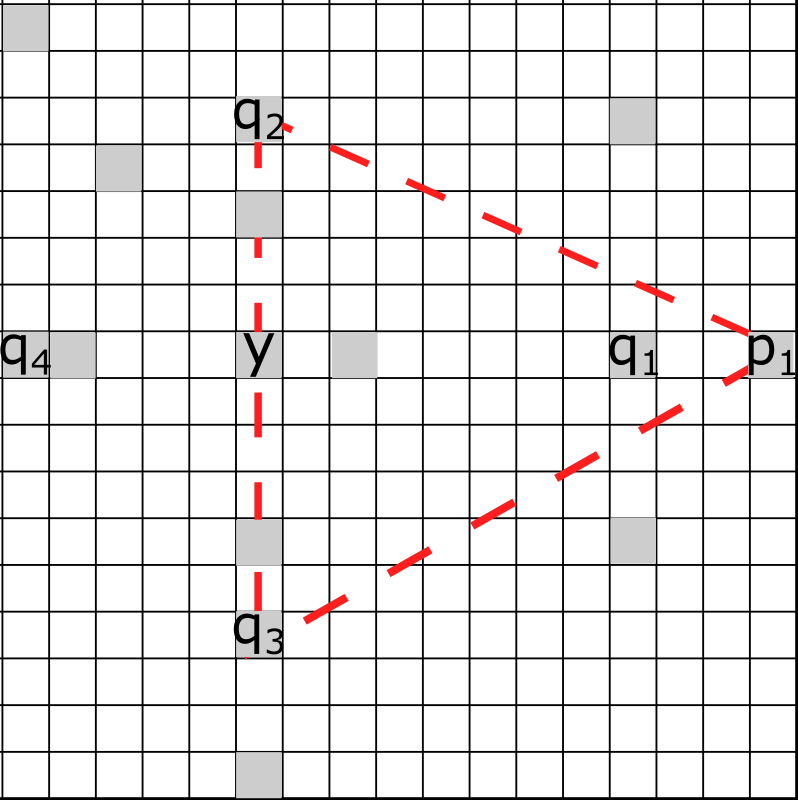}
\caption{\label{fig_half} The setup in Proposition~\ref{triangleLemma}. The points of $X$ are shown in gray, and the triangle formed by the points $p_1,$ $q_2,$ and $q_3$ is shown by the red dashed lines. It is an acute triangle, and the triangle $q_1,q_2,$ and $q_3$ is either acute or right.}
\end{figure}

Let $R_1,\ldots,R_N$ be the rows of $\brac{N}^2$ and $C_1\ldots C_N$ the the columns. Let $1\leq j \leq N.$ If $\abs{X\cap R_j}>4c\sqrt{N}+2,$ let $A_j$ be the set of $4c\sqrt{N}+2$ elements of $X\cap R_j$ that contains the leftmost $2c\sqrt{N}+1$ elements of $X\cap R_j$ and the  rightmost $2c\sqrt{N}+1$ elements  of $X\cap R_j.$ Otherwise, let $A_j=X\cap R_j.$ Similarly, if $\abs{X\cap C_j}>4c\sqrt{N}+2,$ let $B_j$ be the set of   $4c\sqrt{N}+2$ elements of $X\cap C_j$  that contains the uppermost $2c\sqrt{N}+1$ elements of $X\cap C_j$ and the  lowermost $2c\sqrt{N}+1$ elements of $X\cap C_j.$ Otherwise, let $B_j=X\cap C_j.$ Then, if $D=\bigcup_{j=1}^N A_j \cup B_j,$
\[\abs{D}\leq 2N \paren{4c\sqrt{N}+2}=8cN^{1.5}+4N < \abs{X\cap S}\,,\] 
by Equation~\ref{eqn_tri1}. Therefore, $X\cap S \setminus D\neq \varnothing$ and there is a $y\in X\cap S \setminus D.$  The row of $S$ containing $y$ contains at least $2c\sqrt{N}+1$ additional elements of $X\cap S$ both to the left and to the right of $y,$ and the column of $S$ containing $y$ has at least $2c\sqrt{N}+1$ additional elements of $X\cap S$ both above and below $y.$ Let $q_1,$ $q_2,$ $q_3$ and $q_4$ be elements of $X$ to the right, above, below, and to the left of $y$ so that there  $c\sqrt{N}$ other elements of $X\cap S$ between them and $y,$ and at least $c\sqrt{N}$ additional elements between them and the boundary of $S.$ Translate $S$ so that $y$ is located at $\paren{0,0}$ (ignoring the higher-dimensional coordinates). $q_1, q_2, q_3,$ and $q_4$ form a quadrilateral, so there is a non-obtuse angle at at least one of the vertices. Without loss of generality, suppose the angle at $q_1$ is non-obtuse. Let $q_1=\left(\hat{x},0\right),$ $q_2=\left(0,y_1\right),$ and $q_3=\left(0,y_2\right).$ By construction, $\hat{x}$ and $y_1$ are positive, and $y_2$ is negative. Additionally, let $p_1=\left(x,0\right)$ be an element of $X\cap S$ in the same row as $y$ and $q_1$ that is at least $c\sqrt{N}$ cubes further to the right of $q_1.$ Then $p_1, q_2,$ and $q_3$ form an acute triangle. See Figure~\ref{fig_half}.

$p_1, q_2,$ and $q_3$ form an acute triangle, so $\PH_1(\set{p_1,q_2,q_3})$ consists of a single interval. We will show that the length of this interval is greater than $1+\sqrt{m}$ for sufficiently large $N,$ which will imply that the $\PH_1$ class is a stable class of size greater than one. The length of this interval is equal to the circumradius of the triangle minus half the length of its maximum edge. The circumradius of a triangle with edge lengths $a,b,c$ and area $A$ is $abc/4A$ so the circumradius of $p_1,q_2,q_3$ is
\[\frac{\sqrt{\paren{x^2+y_1^2}\paren{x^2+y_2^2}}\paren{y_1-y_2}}{4\frac{1}{2}\paren{y_1-y_2}x}=\frac{\sqrt{\paren{x^2 + y_1^2} \paren{x^2 + y_2^2}}}{2 x}\,.\]

We consider three cases. In the first case, the edge from $q_2$ to $q_3$ is longest. In this case, the total persistence equals
\[TP_1\paren{x,y_1,y_2}=\frac{\sqrt{\paren{x^2 + y_1^2} \paren{x^2 + y_2^2}}}{2 x}-\frac{y_1-y_2}{2}\,.\]

$q_1,q_2,q_3$ is an acute triangle, so $\hat{x}>-y_1y_2$ and $x>\sqrt{-y_1y_2}+c\sqrt{N}.$ In Appendix~\ref{sec:triangleComputations1}, we show that, subject to the constraints that $c\sqrt{N}\leq y_1 \leq N,$ $-N \leq y_2 \leq -c\sqrt{N},$ and $x>\sqrt{-y_1y_2}+c\sqrt{N}$ (which all hold in this case), 

\[TP_1\paren{x,y_1,y_2}\geq TP_1\paren{c\sqrt{N}+N,N,-N}=\frac{c^2 N}{2 \left(c \sqrt{N}+N\right)}\]

which is an increasing function of $N$ whose limit is $\frac{1}{2}c^2$ as $N$ goes to $\infty.$ As $c>\sqrt{4\sqrt{m}+4}>\sqrt{2\sqrt{m}+2},$ the total persistence will exceed $1+\sqrt{m}$ for sufficiently large $N,$ as desired.

In the second case, the edge between $p$ and $q_1$ is longest. In this case, the total persistence equals
\[TP_2\paren{x,y_1,y_2}=\frac{\sqrt{\paren{x^2 + y_1^2} \paren{x^2 + y_2^2}}}{2 x}-\frac{\sqrt{x^2+y_1^2}}{2}\,.\]
As we show in Appendix~\ref{sec:triangleComputations2}, for sufficiently large $N,$ this is minimized when $x$ is as large as possible and the magnitudes of $y_1$ and $y_2$ are as small as possible. Thus, for sufficiently large $N,$
\[TP_2\paren{x,y_1,y_2}\geq TP_2\paren{N,c\sqrt{N},-c\sqrt{N}}= \frac{1}{2} \paren{c^2 + N - \sqrt{c^2 N + N^2}}\]
which is a decreasing function of $N$ with limiting value $\frac{1}{4}c^2.$ Because $c>\sqrt{4\sqrt{m}+4}$, the total persistence is always greater than $1+\sqrt{m},$ as desired. The argument in the third case is identical to this one.
\end{proof}
 
The previous result, together with Proposition~\ref{prop:kappaprop}, completes the proofs of Theorems~\ref{mainTheorem} and~\ref{thmLower}.

\section{Results for the Rips Complex}
\label{sec:rips}
In this section, we first  construct an example where for which $\text{dim}_{\widetilde{\PH}_1}\paren{X}=2$ but $\text{dim}_{\text{box}}\paren{X}=1,$ then we show that this cannot occur in Euclidean space by proving Theorems~\ref{ripsTheorem} and~\ref{theorem_rips_count}.

\subsection{An Example}
\label{RipsExample}
Here, we prove Proposition~\ref{prop:ripsExample} by constructing an example of a metric space not embeddable in any Euclidean space whose $\widetilde{\PH}_1$ dimension equals 2 but whose upper box dimension equals 1. This contrasts with the $\widetilde{\PH}_0$ dimension, which was proven by Kozma, Lotker, and Stupp~\cite{Kozma} to agree with the upper box dimension for all metric spaces. The rough idea of the construction is to build a space that includes a copies of the complete bipartite graph on $2^n$ vertices at different scales.

\begin{proof}[Proof of Proposition~\ref{prop:ripsExample}]
We will construct $X$ as the union of finite point sets $X_n.$  For each $n\in\mathbb{N}$ let  $X_n$ be the set consisting of $2^{n+1}$ points $x^n_1, x^n_2,\ldots, x^n_{2^n}$ and $y^n_1, y^n_2,\ldots, y^n_{2^n},$ and let
\[d\paren{x^n_i,y^n_i}=\frac{1}{2^{n+1}}\hspace{.5in}d\paren{x^n_i,x^n_j}=\paren{1-\delta_{i,j}}\frac{1}{2^n}\hspace{.5in}d\paren{y^n_i,y^n_j}=\paren{1-\delta_{i,j}}\frac{1}{2^n}\,.\]
Let $X=\cup_{i=0}^\infty X_i.$ if $i\neq j,$ $z\in X_i$ and $w\in X_j,$ set
\begin{equation}
\label{eqn_ripsexample}
d\paren{z,w}=\frac{1}{2^{\text{min}\paren{i,j}}}\,.
\end{equation}

We will show the upper box dimension of $X$ equals one by computing $N_{\delta}\paren{X},$ the maximum number of disjoint closed balls of radius $\delta$ centered at points of $X.$ 

Suppose $2^{-n-1}<\delta<2^{-n}$ and that the balls centered at $\textbf{x}$ are a maximal collection of disjoint balls of radius $\delta$ centered at points of $X.$ Let $Y_n=\cup_{j<n}X_j$ and $Z_n=\cup_{j>n}X_j.$ If $y\in Y_n,$ $B_{\delta}\paren{y}$ contains every point of $Y_n,$ but no point of $X\setminus Y_n.$ Therefore, $\textbf{x}$ contains one point of $Y_n.$ Also, for $1\leq i \leq 2^n,$ $B_{\delta}\paren{y^n_i}$ contains $y^n_i$ and the $2^n$ points $x^n_1,\ldots,x^n_{2^n}.$ By symmetry,  $B_{\delta}\paren{x^n_i}$ contains $x^n_i$ and the $2^n$ points $y^n_1,\ldots,y^n_{2^n}.$ Therefore, any two $\delta$-balls centered at points of $X_n$ intersect and $\textbf{x}$  contains either one point $y^n_i$ or one point $x^n_i.$ Finally, if $z\in Z_n,$  $B_{\delta}\paren{z}\cap X=z.$ Therefore, $\textbf{x}$ must contain all of the points of $Z_n.$ It follows that 
\begin{align*}
N_{\delta}\paren{X}=&\;\;\abs{\textbf{x}}\\
=&\;\; 1+2^n+\abs{Z_n}\\
=&\;\; 1+\sum_{j=0}^{n-1}\abs{X_n}\\
=&\;\; 1+\sum_{j=0}^{n-1}2^{n+1}\\
=&\;\; \ 2^{n+1} -1 \\
=& \;\;  2^{-\ceil{\frac{\log\paren{\delta}+1}{\log\paren{2}}}}-1\,.
\end{align*}
 Therefore 
\begin{align*}
\text{dim}_{\text{box}}\paren{X}=&\;\;\limsup_{\delta\rightarrow 0}{\frac{\log\paren{N_\delta\paren{X}}}{-\log\paren{\delta}}}\\
= &\;\;\lim_{\delta \rightarrow 0}\frac{\log\paren{2^{-\ceil{\frac{\log\paren{\delta}}{\log\paren{2}}}+1}-1}}{-\log\paren{\delta}}\\
=&\;\; \lim_{\delta \rightarrow 0} \frac{-\ceil{\frac{\log\paren{\delta}}{\log\paren{2}}}\log\paren{2}}{-\log\paren{\delta}}\\
= & \;\; 1\,.
\end{align*}

On the other hand, consider the Rips complex  $\mathcal{R}_{\epsilon}\paren{X_n}$ on $X_n.$  There are three regimes. If $\epsilon<\frac{1}{2^{n+1}},$ then $\mathcal{R}_{\epsilon}\paren{X_n}$ consists of $2^{n+1}$ $0$-cells. When $\epsilon=\frac{1}{2^{n+1}},$ the $2^{2n}$ edges between the points $\set{x_i^n}_{i=1^{2^n}}$ and $\set{y_i^n}_{i=1^{2^n}}$ enter the complex. The complex remains unchanged until $\epsilon=\frac{1}{2^n},$ when all possible simplices enter the complex and it becomes contractible. As such, all intervals of $\widetilde{PH}_1\paren{X_n}$ are born at $\epsilon=\frac{1}{2^{n+1}}$ and die at $\epsilon=\frac{1}{2^n}.$ By an Euler characteristic argument, there are $2^{2n}-2^{n+1}+1$ such intervals. Therefore, for $\alpha>0$ 
\begin{align*}
\widetilde{E}^1_\alpha\paren{X_n}=&\;\;\paren{2^{2n}-2^{n+1}+1}\paren{\frac{1}{2^n}-\frac{1}{2^{n+1}}}^\alpha\\
=&\;\;\paren{2^{2n}-2^{n+1}+1}\frac{1}{2^{\alpha n + \alpha}}\\
=&\;\;2^{-\alpha}\paren{2^{\paren{2-\alpha}n}-2^{\paren{1-\alpha}n+1}+2^{-\alpha n}}\\
\approx&\;\;2^{-\alpha}2^{\paren{2-\alpha}n}\,,
\end{align*}
which is which is unbounded as $n\rightarrow\infty$ if $\alpha<2.$ Thus $\text{dim}_{\widetilde{\PH}_1}(X)\geq 2.$ 

The argument that $\text{dim}_{\widetilde{\PH}_1}(X)\leq 2$ is similar but slightly trickier. Let $\alpha>2,$ $\textbf{x}\subseteq X,$ and $\textbf{x}_n=\textbf{x}\cap X_n.$ An argument nearly identical to the previous one shows that  $\widetilde{PH}_1\paren{\textbf{x}_n}$ consists of at most $2^{2n}-2^{n+1}+1$ intervals of the form $\paren{\frac{1}{2^{n+1}},\frac{1}{2^n}},$ and
\begin{equation}
\label{eqn_ripsexample3}
\widetilde{E}^1_\alpha\paren{\textbf{x}_n}< 2^{-\alpha}\paren{2^{\paren{2-\alpha}n}+2^{-\alpha n}}\,.
\end{equation}

We will show that we can write 
\[\widetilde{E}^1_\alpha\paren{\textbf{x}}=\sum_{n=0}^\infty \widetilde{E}_\alpha^1 \paren{\textbf{x}_n}\,.\]
Let $n\in\mathbb{N}$ and define
\[K_{i,n}\paren{Y}=\set{\paren{b,d}\in \widetilde{\PH}_i\paren{Y}:\frac{1}{2^{n+1}} \leq b <  \frac{1}{2^{n}}}\,.\]

Also, let 
 \[\frac{1}{2^{n+1}}\leq \epsilon < \frac{1}{2^n},\quad \textbf{y}_n=\cup_{j<n}\textbf{x}_j,\text{ and }\textbf{z}_n=\cup_{j>n}\textbf{x}_j\,.\]
By equation~\ref{eqn_ripsexample},  $\mathcal{R}_{\epsilon}\paren{\textbf{x}}$ contains no edges between points of $\textbf{y}_n$ and points of $\textbf{x}_n.$ Furthermore, no point of $\textbf{z}_n$ is contained in an edge. Therefore, 
\[\mathcal{R}_{\epsilon}\paren{\textbf{x}}=\mathcal{R}_{\epsilon}\paren{\textbf{x}_n}\sqcup \mathcal{R}_{\epsilon}\paren{\textbf{y}_n}\sqcup \mathcal{R}_{\epsilon}\paren{\textbf{z}_n}\,,\]
where $\sqcup$ denotes a disjoint union. $\mathcal{R}_{\epsilon}\paren{\textbf{y}_n}$ is contractible because it contains all possible simplices, so $H_1\paren{\mathcal{R}_{\epsilon}\paren{\textbf{y}_n}}=0.$ $H_1\paren{\mathcal{R}_{\epsilon}\paren{\textbf{z}_n}}$ is also trivial because $\mathcal{R}_{\epsilon}\paren{\textbf{z}_n}$ contains only zero cells. Therefore, $H_1\paren{\mathcal{R}_{\epsilon}\paren{\textbf{x}}}=H_1\paren{\mathcal{R}_{\epsilon}\paren{\textbf{x}_n}}.$ Also, if $\delta>\frac{1}{2^n}$ then $\mathcal{R}_{\delta}\paren{\textbf{x}_n}$ is contractible so the inclusion map  $H_1\paren{\mathcal{R}_{\epsilon}\paren{\textbf{x}}}\rightarrow H_1\paren{\mathcal{R}_{\delta}\paren{\textbf{x}}}$ is trivial. It follows that

\begin{equation}
\label{eqn_ripsexample2}
K_{1,n}\paren{\textbf{x}}=K_{1,n}\paren{\textbf{x}_n}=\widehat{\PH}_1\paren{\textbf{x}_n}\,.
\end{equation}

Let $\alpha>2$ and compute
\begin{align*}
\widetilde{E}^1_\alpha\paren{\textbf{x}}=&\;\; \sum_{n\in\mathbb{N}}\sum_{I\in K_{1,n}\paren{\textbf{x}}}\abs{I}^\alpha\\
=&\;\; \sum_{n\in\mathbb{N}} \sum_{I\in \widehat{PH}_1\paren{\textbf{x}_n}}\abs{I}^\alpha &&\text{by Eqn~\ref{eqn_ripsexample2}}\\
=&\;\;\sum_{n\in\mathbb{N}} \widetilde{E}^1_\alpha\paren{\textbf{x}_n}\\
\leq &\;\;  \sum_{n\in\mathbb{N}} 2^{-\alpha}\paren{2^{\paren{2-\alpha}n}+2^{-\alpha n}} &&\text{by Eqn~\ref{eqn_ripsexample3}} \\
= &\;\;  2^{-\alpha}\paren{\frac{1}{1-2^{\paren{2-\alpha}}}+\frac{1}{1+2^{-\alpha}}} && \text{because $\alpha>2\,.$}
\end{align*}
Therefore $\widetilde{E}^1_\alpha\paren{\textbf{x}}$ is uniformly bounded for all $\textbf{x}\subset X$ when $\alpha>2,$ and $\text{dim}_{\widetilde{\PH}_1}\paren{X}= 2.$ 
\end{proof}

\subsection{A Bound for $\text{dim}_{\widetilde{\PH}_1}$}
\label{sec_rips}
We prove an upper bound for the $\widetilde{\PH}_1$-dimension of a subset of $\R^m$ that does not depend on the ambient Euclidean dimension. Note that the proof of Proposition~\ref{prop1} (and the preceding lemmas) works for persistent homology defined in terms of either the \v{C}ech complex or the Rips complex.

The following argument is a modified version of the proof of Theorem 3.1 in Goff~\cite{Goff2011}, which bounds the (non-persistent) first Betti number of a Rips complex of a finite point set in $\R^m$ in terms of the kissing number $K_m$ of $\R^m.$ Recall that $K_m$ is the maximum number of disjoint unit spheres that can be tangent to a shared unit sphere.  For example $K_1=2, K_2=6,$ and $K_3=12.$ There is an expansive literature on upper and lower bounds for $K_m$~\cite{2004pfender}, but here will we just need that $K_m$ is finite.

 Let $\mathcal{R}_{\epsilon}\paren{X}$ be the Rips complex of $X$ at parameter $\epsilon.$ For a $0$-cell $x$ let $S_{\epsilon}\paren{x}$ denote the star of $x$
\[S_{\epsilon}\paren{x}=\set{\sigma\in \mathcal{R}_\epsilon\paren{X}: x\in \sigma}\,,\]
and $L_{\epsilon\paren{x}}$ denote the link of $x$ 
\[L_{\epsilon}(x)=\set{\paren{z_1,\ldots z_k}\in \mathcal{R}_\epsilon\paren{X}:\paren{x,z_1,\ldots z_k} \in   \mathcal{R}_\epsilon\paren{X}}\,.\]
$S_{\epsilon\paren{x}}$ is contractible.

\begin{figure}
\center
\includegraphics[width=200pt]{% 
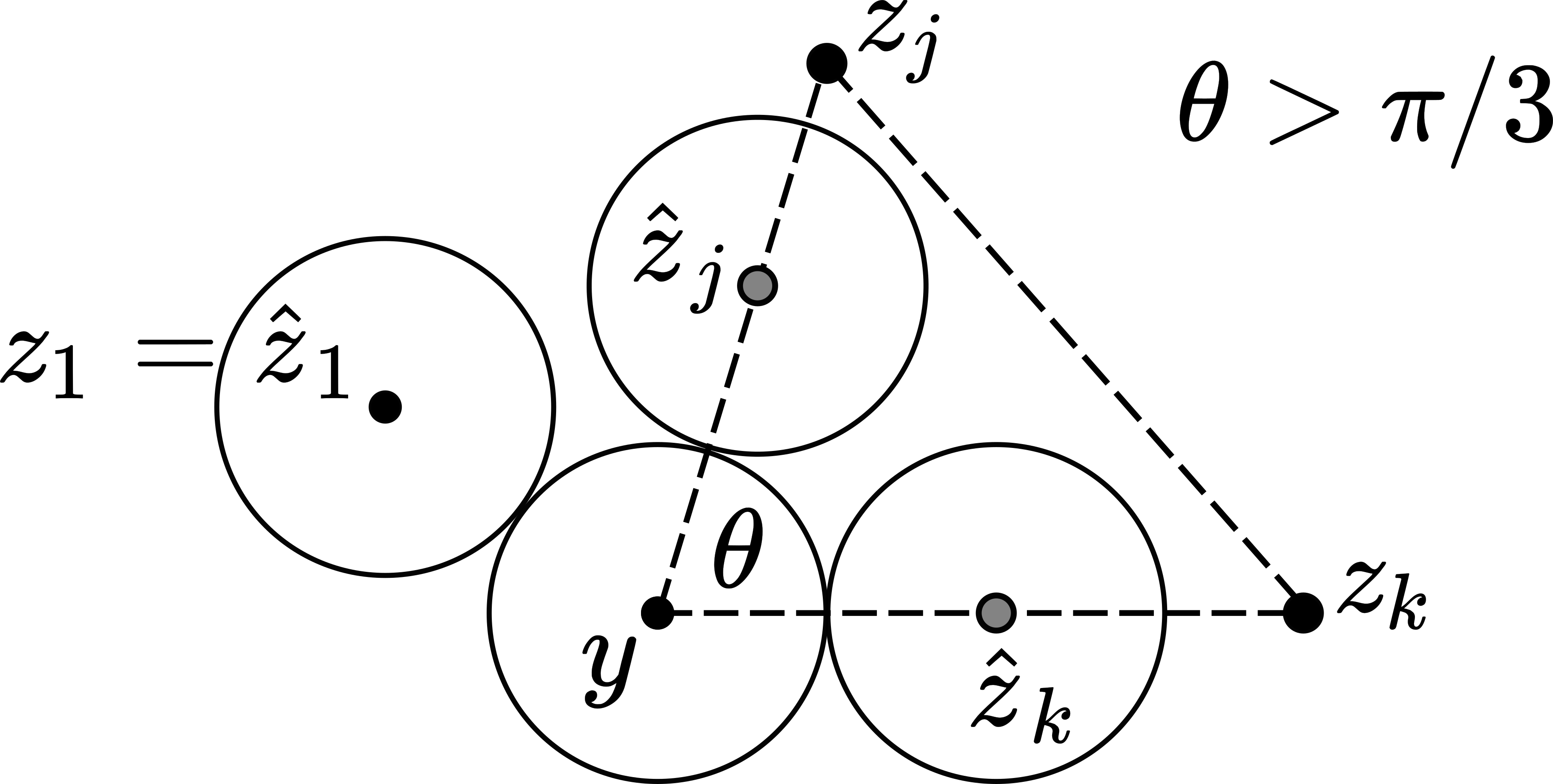}
\caption{\label{fig:triangleRips} The setup in the proof of Lemma~\ref{lemma_kissing}, where $z_1$ is the closest point of $\textbf{x}\setminus y$ to $y.$}
\end{figure}

\begin{Lemma}
\label{lemma_kissing}
Let $\textbf{x}$ be a finite subset of $\R^m,$ and $y\in \textbf{x},$ and let $\set{L_{\epsilon}\paren{y}}_{\epsilon\in\R^+}$ be the filtration of the links of $y$ in $\mathcal{R}_{\epsilon}(\textbf{x}).$ Then
\[\abs{\PH_0\paren{\set{L_{\epsilon}\paren{y}}_{\epsilon\in\R^+}}}\leq K_m-1\,.\]
\end{Lemma}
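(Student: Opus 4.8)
The plan is to reinterpret the filtered link as a Rips filtration, reduce the interval count to a purely combinatorial count of connected-component births, and then bound that count using the kissing-number angle estimate.

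First I would identify the link explicitly. A simplex $\paren{z_1,\ldots,z_k}$ lies in $L_\epsilon\paren{y}$ exactly when $d\paren{y,z_j}\leq \epsilon$ for every $j$ and $d\paren{z_j,z_l}\leq \epsilon$ for every pair. Hence, writing $N_\epsilon\paren{y}=\set{z\in\textbf{x}\setminus\set{y}:d\paren{y,z}\leq\epsilon}$ for the $\epsilon$-neighborhood, the link is precisely the Rips complex $L_\epsilon\paren{y}=\mathcal{R}_\epsilon\paren{N_\epsilon\paren{y}}$. In particular a vertex $z$ enters the link filtration at time $d\paren{y,z}$, and an edge $\paren{z,z'}$ enters at time $\max\paren{d\paren{y,z},d\paren{y,z'},d\paren{z,z'}}$.

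Next, since $\PH_0$ depends only on the $1$-skeleton, I would process the vertices $z$ in increasing order of $d\paren{y,z}$ (refining to a simplexwise filtration to break ties). By the elder rule, each positive-length reduced $\PH_0$ interval corresponds to the birth of a connected component other than the first, and because $L_\epsilon\paren{y}$ is a full simplex (hence connected) once $\epsilon\geq\operatorname{diam}\paren{\textbf{x}}$, every such component eventually merges. Thus $\abs{\PH_0\paren{\set{L_\epsilon\paren{y}}}}=\abs{B}-1$, where $B$ is the set of \textbf{birth vertices}: those $z$ that at their appearance time $d\paren{y,z}$ are joined by no edge to any vertex already present. Unwinding the edge-appearance rule, $z\in B$ iff $d\paren{z,z'}>d\paren{y,z}$ for every $z'\neq z$ with $d\paren{y,z'}\leq d\paren{y,z}$.

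Finally I would bound $\abs{B}$ by the kissing number. Place $y$ at the origin. For any two birth vertices $z,z'$, say with $d\paren{y,z'}\leq d\paren{y,z}$, the defining property of $B$ gives $\norm{z-z'}>\norm{z}=\max\paren{\norm{z},\norm{z'}}$; the law of cosines then yields $\norm{z-z'}^2=\norm{z}^2+\norm{z'}^2-2\norm{z}\norm{z'}\cos\theta>\norm{z}^2$, so $\cos\theta<\norm{z'}/\paren{2\norm{z}}\leq 1/2$ and the angle $\theta$ between $z$ and $z'$ exceeds $60^\circ$. Hence the unit vectors $\set{z/\norm{z}:z\in B}$ have pairwise angles greater than $60^\circ$, so $\abs{B}\leq K_m$ by the definition of the kissing number, giving $\abs{\PH_0\paren{\set{L_\epsilon\paren{y}}}}=\abs{B}-1\leq K_m-1$.

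The main obstacle I expect is the middle step: making the reduced-$\PH_0$ bookkeeping rigorous. One must correctly count intervals as component births via the elder rule, discard the zero-length births (vertices that are connected at their appearance time contribute only diagonal points), handle ties among the distances $d\paren{y,z}$ and simultaneous edge appearances, and verify that the ``$-1$'' of reduced homology is exactly what upgrades the kissing bound $K_m$ to $K_m-1$. By contrast, the geometric kissing estimate is short; essentially all the care lies in translating the persistence count into the clean combinatorial isolation condition defining $B$.
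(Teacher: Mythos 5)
Your proposal is correct and follows essentially the same route as the paper's proof: both identify the reduced $\PH_0$ intervals of the link filtration with vertices that enter the link isolated, deduce $d\paren{z,z'}>\max\paren{d\paren{y,z},d\paren{y,z'}}$ for any two such vertices, convert this into an angle greater than $\pi/3$ at $y,$ and bound the count by the kissing number $K_m$ (the paper realizes the angular bound concretely via disjoint $\epsilon$-balls tangent to $B_\epsilon\paren{y},$ which is equivalent to your unit-vector formulation). The one point you flag as delicate --- ties --- is handled in the paper by perturbing the points so all pairwise distances are distinct, justified by bottleneck stability (Corollary~\ref{stableCorollary}); this is worth adopting, since your stated isolation criterion for $B$ is only valid when the entry times are distinct (two tied vertices joined by a short edge can jointly birth a component while neither is isolated at entry), whereas after perturbation it is exactly right.
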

\begin{proof}

If necessary, perturb the points of $\textbf{x}$ so all pairwise distances are distinct without reducing the number of $\PH_0$ bars (by Corollary~\ref{stableCorollary}, this can be done by perturbing the points by a distance less than $\delta/2,$ where $\delta$ is the length of the shortest $\PH_0$ bar). Let the $\PH_0$ intervals of the filtration be $\set{\paren{b_j,d_j}}_{j=1}^k.$ For each $j,$ the number of components of $L_{\epsilon}\paren{y}$ increases at $\epsilon=b_j.$ The only way a new component can be added to the link of $y$ at $\epsilon=b_j$ is if there is a point $z_j$ so that  $b_j=d\paren{z_j,y}.$ Let $\set{z_j}_{j=1}^k$ be the collection of these points.
  
Let $\epsilon=d\paren{y,\textbf{x}\setminus y}/2.$ For each $z_j,$ let $\hat{z}_j$ be the point on the line segment from $y$ to $z_j$ with $d\paren{y,\hat{z}_j}=2\epsilon.$ By construction, the balls of radius $\epsilon$ centered at the points $\hat{z}$ are tangent to the ball of radius $\epsilon$ centered at $y.$ We will show that they are disjoint by performing another computation related to the non-hatted points $\set{z_j}.$

Let $z_j$ and $z_k$ be distinct and assume without loss of generality that $d\paren{z_k,y}>d\paren{z_j,y}.$ See Figure~\ref{fig:triangleRips}. Then $z_j$ and $z_k$ are contained in in $L_{d\paren{z_k,y}}\paren{y},$ but they must be in distinct components (if not, the number of components of the link would not increase at $\epsilon=d\paren{z_k,y}$). In particular,
\[d\paren{z_j,z_k}>d\paren{z_k,y}>d\paren{z_j,y}\,.\]
Consider the triangle $T_0$ formed by $z_j,z_k,$ and $y.$ The previous equation shows the edge between $z_j$ and $z_k$ is the longest edge of $T_0$, so the angle opposite to it must be greater than $\pi/3.$ In other words, the angle between the vectors $z_j-y$ and $z_k-y$ is greater than $\pi/3.$  By construction, the angle between the vectors $\hat{z}_i-y$ and $\hat{z}_j-y$ equals the angle between the vectors $z_j-y$ and $z_k-y.$ Furthermore, these vectors have the same length, $2\epsilon.$ Therefore, the triangle formed by $\hat{z}_j,\hat{z}_k$ and $y$ is an isosceles triangle so that the angle opposite the edge $\paren{\hat{z}_j,\hat{z}_k}$ is greater than $\pi/3.$ It follows that this is the longest edge of the triangle and $d\paren{\hat{z}_j,\hat{z}_k}>2\epsilon.$  

Therefore, the balls $\set{B_\epsilon\paren{\hat{z}_j}}$ are disjoint and tangent to $B_{\epsilon}\paren{y},$ so there are at most $K_m$ of them. Thus
\[ \abs{\PH_0\paren{\set{L_{\epsilon}\paren{y}}_{\epsilon\in\R^+}}}=\abs{\set{z_j}}-1=\abs{\set{B_\epsilon\paren{\hat{z}_j}}}\leq K_m-1\,,\]
where the ``$-1$'' comes from the fact that are using reduced homology.
\end{proof}

\begin{Lemma}
\label{lemma:changeByone}
Let $\textbf{x}$ be a finite metric space so that all pairwise distances are distinct, and let $\epsilon_1=0<\epsilon_2<\ldots<\epsilon_p$ be the values of $\epsilon$ at which a simplex is added to the Rips complex of $\textbf{x}.$
 Then 
\s{\abs{\widetilde{PH}_1\paren{\textbf{x}}}=\abs{\set{j:\text{dim}\:H_1\paren{\mathcal{R}_{\epsilon_{j}}\left(\textbf{x}\right)}=\text{dim}\:H_1\paren{\mathcal{R}_{\epsilon_{j-1}}\left(\textbf{x}\right)}+1}}\,.}
\end{Lemma}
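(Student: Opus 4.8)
The plan is to track how $\dim H_1$ changes across a single filtration step and match each unit increase with the birth of exactly one persistence interval. Write $K = \mathcal{R}_{\epsilon_{j-1}}\paren{\textbf{x}}$ and $K' = \mathcal{R}_{\epsilon_j}\paren{\textbf{x}}$, and set $\beta_j = \dim H_1\paren{K'}$ with the convention $\beta_0 = 0$. Because all pairwise distances are distinct, there is a unique edge $e_j$ of length $\epsilon_j$, and every simplex that enters at step $j$ has diameter $\epsilon_j$ and therefore contains $e_j$ as its unique longest edge. I would introduce the intermediate complex $K'' = K \cup \set{e_j}$, so that $K \subseteq K'' \subseteq K'$, and observe that $K''$ and $K'$ share the same $1$-skeleton: they differ only in simplices of dimension at least $2$.

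The number of $\widetilde{\PH}_1$ intervals born at $\epsilon_j$ equals $\beta_j - \operatorname{rank} i_{\epsilon_{j-1},\epsilon_j}$, and since each interval is born at exactly one critical value, $\abs{\widetilde{\PH}_1\paren{\textbf{x}}}$ is the sum over $j$ of these birth counts. So it suffices to show that at each step the number of births is $0$ or $1$, and equals $1$ precisely when $\beta_j = \beta_{j-1}+1$. I would factor the structure map as $H_1\paren{K} \xrightarrow{f} H_1\paren{K''} \xrightarrow{g} H_1\paren{K'}$. Adding a single edge shows $f$ is injective with $\dim\operatorname{coker} f \leq 1$, while equality of $1$-skeletons shows $g$ is surjective; a short chase then identifies the number of births with $\dim\operatorname{coker}\paren{gf} = \dim H_1\paren{K''} - \dim\paren{\operatorname{im} f + \ker g}$, which is at most $\dim\operatorname{coker} f \leq 1$.

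The crux is ruling out a simultaneous birth and death. I claim that if even one triangle enters at step $j$, then nothing is born. Indeed, any such triangle $\paren{u,v,w}$ contains $e_j$, and its boundary is the $1$-cycle formed by $e_j$ together with the path $u$--$w$--$v$ of already-present edges; this is exactly the new class created by $e_j$ in $K''$, so it represents a generator of $\operatorname{coker} f$ that also lies in $\ker g$. Hence $\operatorname{im} f + \ker g$ strictly contains $\operatorname{im} f$, forcing $\dim\operatorname{coker}\paren{gf} \leq \dim\operatorname{coker} f - 1 = 0$. Consequently a birth at step $j$ forces that no triangle is added, and therefore (since any higher-dimensional simplex entering at step $j$ would have a triangular face through $e_j$ entering simultaneously) that only $e_j$ is added; then $K' = K''$, $g$ is an isomorphism, and $\dim\ker\paren{gf} = 0$, so nothing dies. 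Thus $\beta_j - \beta_{j-1}$, which is births minus deaths, equals $1$ exactly when a single class is born and none dies, and summing over $j$ gives $\abs{\set{j : \beta_j = \beta_{j-1}+1}} = \abs{\widetilde{\PH}_1\paren{\textbf{x}}}$.

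The main obstacle is the third paragraph: showing that an increase in $\dim H_1$ cannot be accompanied by a cancelling death. This is exactly where the hypothesis of distinct pairwise distances is essential, since it guarantees that all simplices entering at a given step share the unique longest edge $e_j$, which is what lets me pin down the boundary of any incoming triangle as the newly born cycle and conclude that births and deaths never coincide at a single step.
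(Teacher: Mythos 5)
Your proposal is correct and takes essentially the same approach as the paper: both arguments rest on the fact that distinct pairwise distances force every simplex entering at $\epsilon_j$ to contain the unique new edge $e_j$ as its longest edge, so that any incoming triangle makes the newly created cycle homologous to a cycle supported in $\mathcal{R}_{\epsilon_{j-1}}\left(\textbf{x}\right)$, which rules out a simultaneous birth and death. Your factorization through the intermediate complex $K''$ with the cokernel count $\dim\operatorname{coker}\left(gf\right)$ is a cleaner linear-algebraic packaging of what the paper does via the Euler characteristic of the $1$-skeleton and a homologous-cycle contradiction, but the underlying decomposition and key mechanism are the same.
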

\begin{proof}
We will show that there is bijection between the birth times $\PH_1$ intervals and filtration values $\epsilon_j$ so that the rank of the first homology increases by one. If 
\[\text{dim}\:H_1\paren{\mathcal{R}_{\epsilon_{j}}\left(\textbf{x}\right)}=\text{dim}\:H_1\paren{\mathcal{R}_{\epsilon_{j-1}}\left(\textbf{x}\right)}+1\]
then there is persistent homology interval born at $\epsilon_j,$ with death time at least $\epsilon_{j+1}.$ \footnote{Note that a similar statement is false for general Rips complexes on infinite point sets; there can be ``ephermal'' homology classes that are born and die at the same value of $\epsilon$ and do not correspond to any persistent homology interval.} Therefore
\s{\abs{\widetilde{PH}_1\paren{\textbf{x}}}\geq \abs{\set{j:\text{dim}\:H_1\paren{\mathcal{R}_{\epsilon_{j}}\left(\textbf{x}\right)}=\text{dim}\:H_1\paren{\mathcal{R}_{\epsilon_{j-1}}\left(\textbf{x}\right)}+1}}\,.}

Suppose a $\PH_1$ interval is born at $\epsilon_j.$  Note that, by the definition of the Rips complex, the $\epsilon_k$'s are precisely the pairwise distances between the points of $\textbf{x}.$  It follows that exactly one edge enters the complex at $\epsilon_j:$ the unique edge $(z_j,y_j)$ so that $d\paren{z_j,y_j}=\epsilon_j.$ The Euler characteristic of the $1$-skeleton of the Rips complex increases exactly by one, so
\[\text{dim}\:H_1\paren{\mathcal{R}_{\epsilon_{j}}\left(\textbf{x}\right)}\leq \text{dim}\:H_1\paren{\mathcal{R}_{\epsilon_{j-1}}\left(\textbf{x}\right)}+1\,.\]

A $\PH_1$ interval is born at $\epsilon_j$ so there is a cycle $\sigma\in H_1\paren{\mathcal{R}_{\epsilon_{j}}\left(\textbf{x}\right)}$ that is not homologous to a cycle in $H_1\paren{\mathcal{R}_{\epsilon_{j-1}}\left(\textbf{x}\right)}.$ $\sigma$ must contain the edge $(y_j,z_j).$ We will show that the contribution of $\sigma$ to the rank of $H_1\paren{\mathcal{R}_{\epsilon_{j}}\left(\textbf{x}\right)}$ cannot be canceled by the simultaneous death of another $\PH_1$ interval.  By way of contradiction, assume that another $\PH_1$ interval has death time equal to $\epsilon_j,$ so a $2$-simplex also enters the Rips complex at $\epsilon_j.$ Then it must be of the form $\paren{y_j,z_j,w_j}$ for some $w_j\in \textbf{x}.$ Then $\sigma$ is homologous to the cycle obtained by replacing the edge $(y_j,z_j)$ with the edges $(y_j,z_j)$ and $(z_j,w_j).$ This is a contradiction, so every cycle in  $H_1\paren{\mathcal{R}_{\epsilon_{j-1}}\left(\textbf{x}\right)}$ is a cycle in $H_1\paren{\mathcal{R}_{\epsilon_{j}}\left(\textbf{x}\right)}$ and
\[\text{dim}\:H_1\paren{\mathcal{R}_{\epsilon_{j}}\left(\textbf{x}\right)}=\text{dim}\:H_1\paren{\mathcal{R}_{\epsilon_{j-1}}\left(\textbf{x}\right)}+1\,.\]

\end{proof}

\begin{proof}[Proof of Theorem~\ref{theorem_rips_count}]
We will proceed by  induction on the size of $\textbf{x}.$ Let $\delta$ be the length of the shortest interval of $\widetilde{PH}_1\paren{\textbf{x}}.$ Perturb the points of $\textbf{x}$ by an amount less than $\delta/2$ so that the lengths of all of the pairwise distances between points of $\textbf{x}$ are distinct. The stability of the bottleneck distance implies that no bars of $\widetilde{\PH}_1\paren{\textbf{x}}$ are destroyed by this perturbation, so an upper bound for the perturbed set implies one for the original. 

For $\epsilon\geq 0$ and $y\in\textbf{x},$ we have that 
\[\mathcal{R}_{\epsilon}\paren{\textbf{x}}=\mathcal{R}_{\epsilon}\left(\textbf{x}\setminus y\right)\cup S_{\epsilon}\left(y\right)\,\,\]
and 
\[\mathcal{R}_{\epsilon}\left(\textbf{x}\setminus y\right)\cap S_{\epsilon}\left(y\right)=L_{\epsilon}\paren{x}\,.\]
As such, there is a Mayer--Vietoris sequence.
\[\ldots\rightarrow H_1\paren{L_{\epsilon}\left(y\right)}\rightarrow H_1\paren{\mathcal{R}_{\epsilon}\paren{\textbf{x}\setminus y}}\oplus H_1\paren{ S_{\epsilon}\left(y\right)} \rightarrow  H_1\paren{\mathcal{R}_{\epsilon}\paren{\textbf{x}}}\rightarrow H_0\paren{L_{\epsilon}\left(y\right)}\rightarrow\ldots\]

Let $\epsilon_1=0<\epsilon_2<\ldots<\epsilon_p$ be the values of $\epsilon$ at which a simplex is added to the Rips complex of $\textbf{x}.$ The Mayer--Vietoris sequences for each $\epsilon_j$ fit into the following commutative diagram, where vertical maps are inclusions and we have suppressed $H_1\paren{S_{\epsilon}\paren{x}}=0$ in the left column.

\begin{center}
\begin{tikzcd}
H_1\paren{\mathcal{R}_{\epsilon_{i-1}}\left(\textbf{x}\setminus y\right)} \arrow[r,"\alpha_{i-1}"] \arrow[d] & H_1\paren{\mathcal{R}_{\epsilon_{i-1}}\left(\textbf{x}\right)} \arrow[r,"\partial_{i-1}"] \arrow[d] & H_0\paren{L_{\epsilon_{i-1}}\left(y\right)}\arrow[d,"\xi_i"] \\
H_1\paren{\mathcal{R}_{\epsilon_{i}}\left(\textbf{x}\setminus y\right)} \arrow[r,"\alpha_i"] \arrow[d] & H_1\paren{\mathcal{R}_{\epsilon_{i}}\left(\textbf{x}\right)} \arrow[r,"\partial_i"] \arrow[d] & H_0\paren{L_{\epsilon_{i}}\left(y\right)}\arrow[d,"\xi_{i+1}"] \\
H_1\paren{\mathcal{R}_{\epsilon_{i+1}}\left(\textbf{x}\setminus y\right)} \arrow[r,"\alpha_{i+1}"] & H_1\paren{\mathcal{R}_{\epsilon_{i+1}}\left(\textbf{x}\right)} \arrow[r,"\partial_{i+1}"] & H_0\paren{L_{\epsilon_{i+1}}\left(y\right)}
\end{tikzcd}
\end{center}

Recall from the proof of the previous lemma that $H_1\paren{\mathcal{R}_{\epsilon_{i}}\left(\textbf{x}\right)}$ and $H_1\paren{\mathcal{R}_{\epsilon_{i}}\left(\textbf{x}\setminus y\right)}$ can increase at most by one from row to row, and that the number of times the rank increases is equal to the number of $\PH_1$ intervals of the column. 

Homology is taken with field coefficients so
\s{H_1\paren{\mathcal{R}_{\epsilon_{i}}\left(\textbf{x}\right)}\cong\text{im}\,\alpha_{i}\oplus\text{im}\,\partial_{i}\,.} 
Therefore, if 
\[\text{dim}\:H_1\paren{\mathcal{R}_{\epsilon_{i}}\left(\textbf{x}\right)}=\text{dim}\:H_1\paren{\mathcal{R}_{\epsilon_{i-1}}\left(\textbf{x}\right)}+1\] the dimension of either $\text{im}\,\alpha_{i}$ or $\text{im}\,\partial_{i}$ must also have increased. In the former case, commutativity of the diagram (combined with the previous lemma) implies that  
\s{\text{dim}\:H_1\paren{\mathcal{R}_{\epsilon_{i}}\left(\textbf{x}\setminus y\right)}= \text{dim}\:H_1\paren{\mathcal{R}_{\epsilon_{i-1}}\left(\textbf{x}\setminus y\right)}+1\,,}
so an interval of $\widetilde{PH}_1\paren{\textbf{x}\setminus y}$ must also be born at time $\epsilon_{i}.$ In latter case, commutativity of the diagram implies that there a $\sigma_i\in H_0\paren{L_{\epsilon_{i}}\paren{y}}$ so that
\begin{equation}
\label{eqn_temp_rips}
\sigma_i\in \text{im}\,\partial_{i} \text {  and  } \sigma_i \notin \text{im}\, \xi_{i}\circ \xi_{i-1}\ldots \circ \xi_{k+1}\circ\partial_{k} \;\forall\; 1\leq k < i \,.
\end{equation}
Let 
\[\hat{i}=\max\set{j: 1\leq j\leq i\text{ and }\sigma_i \notin \text{im}\, \xi_{i}\circ \xi_{i-1}\ldots\circ \xi_j}.\]
(Note that we are using reduced homology, so the row of diagram with $i=1$ is all zero and $\xi_1=0.$). We may assume that $\sigma_i$ was chosen to give the minimal possible value of $\hat{i},$ subject to Equation~\ref{eqn_temp_rips}. By construction, there is an element of $H_1\paren{L_{\epsilon_{\hat{i}}}}$  that is not in the image of $\xi_{\hat{i}}$ (the one that maps to $\sigma_i$), so there is a $\PH_0$ interval $I_i$ of the link filtration of $y$ with birth time equal to $\epsilon_{\hat{i}}.$ This interval is unique, by the same reasoning as in Lemma~\ref{lemma_kissing}. We will show that $i\mapsto I_{i}$ gives an injection
\s{\set{i:\text{dim}\:\text{im}\,\partial_{i}>\text{dim}\:\text{im}\,\partial_{i-1}} \rightarrow\widetilde{PH}_0\paren{\set{L_{\epsilon}\paren{x}}}\,.}

By way of contradiction, suppose that  that 
\[\epsilon_j, \epsilon_l\in \set{i:\text{dim}\:\text{im}\,\partial_{i}>\text{dim}\:\text{im}\,\partial_{i-1}}\]
 and $j<l$  but $I_j=I_l$ (so $\hat{j}=\hat{l}$). 
 Choose $\hat{\sigma}_j,\hat{\sigma}_l\in H_0\paren{L_{\epsilon_{\hat{l}}}\paren{y}}$  so that
\[\xi_j \circ \ldots \xi_{\hat{l}}\paren{\hat{\sigma}_j}=\sigma_j\text{ and }\xi_l \circ \ldots \xi_{\hat{l}}\paren{\hat{\sigma}_l}=\sigma_l\,.\]
Let $z$ be the unique point of $\textbf{x}$ so that $z\in L_{\epsilon_{\hat{l}}}\paren{y}$ but $z\notin L_{\epsilon_{\hat{l}}-1}\paren{y}.$ By the minimality of $\hat{j}$ and $\hat{l},$ there are $a_j,a_l$ in the coefficient field so that $a_j,a_l\neq 0$ and 
\[\hat{\sigma}_j-a_j z, \hat{\sigma}_l-a_l z \in \text{im}\,\xi_{\hat{l}}\,.\]
$j<l$ and $\frac{a_l}{a_j}\sigma_j\in\text{im}\:\,\partial_{j},$ so it follows from commutativity of the diagram that $\sigma_l-\frac{a_l}{a_j}\sigma_j$ must also satisfy Equation~\ref{eqn_temp_rips} for $i=l,$ but
\[\sigma_l-\frac{a_l}{a_j}\sigma_j=\xi_l \circ \ldots \circ \xi_{\hat{l}}\paren{\hat{\sigma}_l-\frac{a_l}{a_j}\hat{\sigma_j}}\]
and $\hat{\sigma}_l-\frac{a_l}{a_j}\hat{\sigma_j} \in \text{im}\,\xi_{\hat{l}},$ which contradicts the minimality of $\hat{l}.$ Therefore 

\s{\abs{\set{i:\text{dim}\:\text{im}\,\partial_{i}>\text{dim}\:\paren{\text{im}\,\partial_{i-1}}}}\leq \abs{\widetilde{PH}_0\paren{\set{L_{\epsilon}\paren{x}}}}\,.}
 
Thus, by applying the previous lemma,
\begin{align*}
\abs{\widetilde{PH}_1\paren{\textbf{x}}} = & \;\; \abs{\set{i:\text{dim}\:H_1\paren{\mathcal{R}_{\epsilon_{i+1}}\left(\textbf{x}\right)}=\text{dim}\:H_1\paren{\mathcal{R}_{\epsilon_{i}}\left(\textbf{x}\right)}+1}}\\
=&\;\;\abs{\set{i:\text{dim}\:\text{im}\,\partial_{i}>\text{dim}\:\text{im}\,\partial_{i-1}}}+ \abs{\set{i:\text{dim}\:\text{im}\,\alpha_{i}>\text{dim}\alpha_{i-1}}}\\
 \leq & \;\;\abs{\widetilde{PH}_1\paren{\textbf{x}\setminus y}} + \abs{\widetilde{PH}_0\paren{\set{L_{\epsilon}\paren{x}}_{\epsilon\in\R^+}}}\\
\leq &\;\; \abs{\widetilde{PH}_1\paren{\textbf{x}\setminus y}}+K_{m}-1 &&\text{by Lemma~\ref{lemma_kissing}}\\
\leq & \;\; \paren{K_m-1} \abs{x} &&\text{by induction}\,.
\end{align*}

\end{proof}

This, together with Proposition~\ref{prop1}, implies Theorem~\ref{ripsTheorem}.

\section{Conclusion}
We have taken the first steps toward answering Question~\ref{MainQuestion}, which asks for hypotheses under which the $\PH_i$-dimension of a bounded subset of $\R^m$ equals its upper box dimension. However, the question remains open for all cases with $m>2$ and $i>0.$ We suspect that even the $n=2,i=1$ case can be improved, to include sets whose upper box dimension is between $1$ and $1.5.$ 

Another interesting question is whether similar results can be shown for a probabilistic version of the $\PH_i$-dimension. That is, if $\textbf{x}$ is a finite point collection drawn from a probability measure on $\R^m,$ can the expectation of $E_i\paren{\textbf{x}}$ be controlled in terms a classically defined fractal dimension for probability measures?  This question would perhaps be more interesting for applications, which usually deal with random point collections rather than extremal ones. However, proving a lower bound is already difficult in the extremal case.

\section{Acknowledgments}
The author would like to thank Henry Adams, Tamal Dey, Herbert Edelsbrunner, Matthew Kahle, Chris Peterson, and Shmuel Weinberger for useful discussions. In particular, he would like to thank Herbert Edelsbrunner for suggesting the example in Section~\ref{sec:Arcs} and the reference~\cite{2010CohenSteiner}. He would also like to thank the members of the Pattern Analysis Laboratory at Colorado State University, whose computational experiments involving a similar dimension inspired the ideas in this paper~\cite{2019adams}. 

Funding for this research was provided by a NSF Mathematical Sciences Postdoctoral Research Fellowship under award number DMS-1606259.

\appendix
\section{Properties of $\PH$ complexity}
\label{sec:compWithMeas}

\begin{figure}
\center
\includegraphics[width=.3\textwidth]{% 
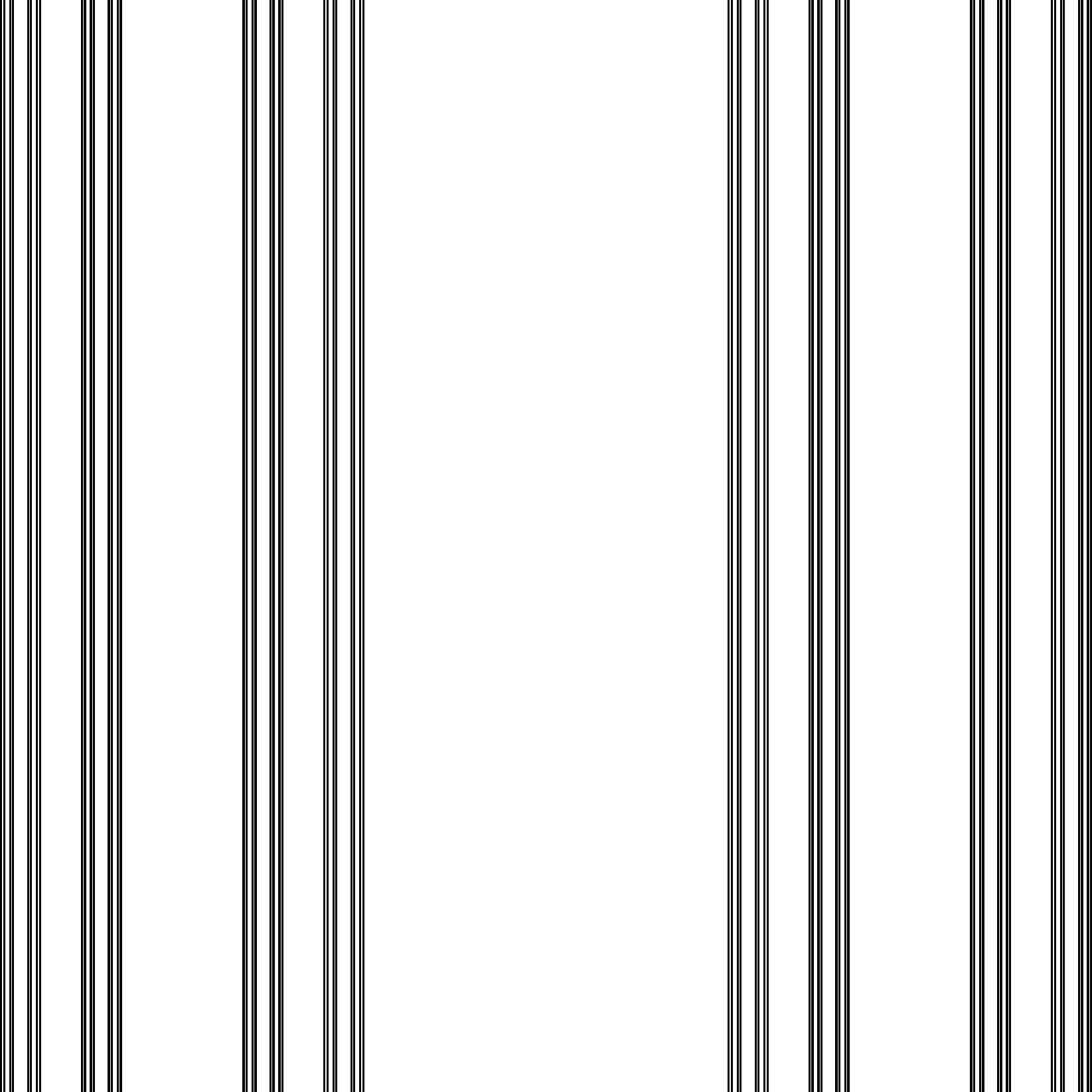}
\caption{\label{fig_CantorCrossInterval} The Cantor set cross an interval. If $C$ is the cantor set and $I$ is an interval, $\text{dim}_{\widehat{\PH}}^0\paren{C\times I}=\text{dim}_{\widehat{\PH}}^0\paren{C}=\log_3{2}$ and $\text{dim}_{\widehat{\PH}}^1\paren{C\times I}=0,$ but $\text{dim}_{\PH}^0\paren{C\times I}=\text{dim}_{\PH}^1\paren{C\times I}=\text{dim}_{\text{box}}\paren{C\times I}=1+\log_3{2}.$}
\end{figure}

In previous work with MacPherson~\cite{2012macpherson}, we defined another notion of persistent homology dimension for a compact subset of a metric space based on the persistent homology of the subset itself rather than of the finite point sets contained within it. Here, we denote it by $\text{dim}_{\widehat{\PH}}^i\paren{X}.$ $\text{dim}_{\widehat{\PH}}^i\paren{X}$ measures  the complexity of a shape rather than any classicly defined fractal dimension, so we refer to it as  the``$\PH_i$ complexity'' of $X.$ See Figure~\ref{fig_CantorCrossInterval}.

\begin{Definition}
The $i$-th dimensional persistent homology complexity of compact subset of a metric space is
\[\text{dim}_{\widehat{\PH}}^i\paren{X}=\sup\set{c:\lim_{x\rightarrow 0}x^c I_{i,\epsilon}\paren{X}=\infty}\,.\]
\end{Definition}

We show that there is an equivalent definition that looks more similar to that of $\text{dim}_{\PH}^i.$

\begin{Proposition}
Let $X$ be a compact subset of a metric space. Then
\[\text{dim}_{\widehat{\PH}}^i\paren{X}=\inf\set{\alpha : E_\alpha^i\paren{X} < \infty}\,.\]
\end{Proposition}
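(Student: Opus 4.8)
The plan is to reduce both sides of the identity to a single ``critical growth exponent'' of the counting function $f\paren{\epsilon}=\abs{I_{i,\epsilon}\paren{X}}$, and to pass between this counting function and the total persistence $E_\alpha^i\paren{X}=\sum_j \ell_j^\alpha$ (where $\ell_1\geq \ell_2\geq\cdots$ are the lengths of the bounded $\PH_i$ intervals of $X$) by a layer-cake identity. First I would record the structural facts I need about $f$: it is non-increasing in $\epsilon$; it is finite for every $\epsilon>0$ by the finiteness hypotheses (alternatively, approximating $X$ by a finite $\epsilon/4$-net and applying Corollary~\ref{stableCorollary} gives $f\paren{\epsilon}\leq\abs{I_{i,\epsilon/2}\paren{\textbf{x}}}<\infty$); and the $\ell_j$ are bounded, by stability against a one-point set (Theorem~\ref{thm_stable}). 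After rescaling the metric I may therefore assume every $\ell_j<1$, so $f\paren{\epsilon}=0$ for $\epsilon\geq 1$, and neither side of the claimed identity is affected by this rescaling.

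The key computational step is the layer-cake identity. Because all summands are non-negative, Tonelli's theorem gives, for every $\alpha>0$, $E_\alpha^i\paren{X}=\sum_j \ell_j^\alpha=\alpha\int_0^1 \epsilon^{\alpha-1}f\paren{\epsilon}\,d\epsilon$, using $\ell_j^\alpha=\alpha\int_0^{\ell_j}\epsilon^{\alpha-1}\,d\epsilon$ and $\sum_j \mathbf{1}\brac{\epsilon<\ell_j}=f\paren{\epsilon}$. Thus finiteness of $E_\alpha^i\paren{X}$ is equivalent to convergence of this integral near $0$. I would then show that both quantities coincide with the critical exponent $\beta=\limsup_{\epsilon\to 0}\log f\paren{\epsilon}/\log\paren{1/\epsilon}$. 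That $\text{dim}_{\widehat{\PH}}^i\paren{X}=\beta$ follows from the defining supremum: for $c<\beta$ pick $c<c'<\beta$ and a sequence $\epsilon_n\to 0$ with $f\paren{\epsilon_n}>\epsilon_n^{-c'}$, so $\epsilon_n^{c}f\paren{\epsilon_n}>\epsilon_n^{c-c'}\to\infty$ and $c$ lies in the defining set; for $c>\beta$ pick $\beta<c'<c$, so $f\paren{\epsilon}<\epsilon^{-c'}$ for small $\epsilon$ and $\epsilon^{c}f\paren{\epsilon}\to 0$, so $c$ does not. Hence the defining set is the down-set determined by $\beta$.

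It remains to show $\inf\set{\alpha:E_\alpha^i\paren{X}<\infty}=\beta$, which I would split into two inequalities. For $\alpha>\beta$, choose $\beta<c'<\alpha$; then $f\paren{\epsilon}<\epsilon^{-c'}$ for $\epsilon<\epsilon_0$, so $\int_0^{\epsilon_0}\epsilon^{\alpha-1}f\paren{\epsilon}\,d\epsilon<\int_0^{\epsilon_0}\epsilon^{\alpha-1-c'}\,d\epsilon<\infty$ since $\alpha-1-c'>-1$, while $\int_{\epsilon_0}^1\epsilon^{\alpha-1}f\paren{\epsilon}\,d\epsilon<\infty$ because $f$ is bounded on $\brac{\epsilon_0,1}$; hence $E_\alpha^i\paren{X}<\infty$ and $\inf\set{\alpha:E_\alpha^i\paren{X}<\infty}\leq\beta$. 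For $\alpha<\beta$, choose $\alpha<c'<\beta$ and a sequence $\epsilon_n\to 0$ with $f\paren{\epsilon_n}>\epsilon_n^{-c'}$, and use monotonicity of $f$ to get $f\paren{\epsilon}\geq f\paren{\epsilon_n}>\epsilon_n^{-c'}$ for all $\epsilon\leq\epsilon_n$, whence $\int_0^1\epsilon^{\alpha-1}f\paren{\epsilon}\,d\epsilon\geq \epsilon_n^{-c'}\int_0^{\epsilon_n}\epsilon^{\alpha-1}\,d\epsilon=\epsilon_n^{\alpha-c'}/\alpha\to\infty$, so $E_\alpha^i\paren{X}=\infty$ and $\inf\set{\alpha:E_\alpha^i\paren{X}<\infty}\geq\beta$.

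The hard part is precisely this last divergence step: the lower bound on $f$ coming from the definition of $\beta$ only controls $f$ along a subsequence of scales, and without the monotonicity of the counting function one could not promote that subsequential bound to divergence of the full total-persistence integral. (This is also the point where the distinction between a $\limsup$ and an honest $\lim$ in the defining condition matters; interpreting the defining condition as the growth rate $\beta$ of $f$, which is how $\text{dim}_{\widehat{\PH}}^i$ is meant, makes the two reductions match.) Once the divergence direction is secured by monotonicity, the reduction to $\beta$ and the Tonelli identity are routine, and combining the two inequalities yields $\text{dim}_{\widehat{\PH}}^i\paren{X}=\beta=\inf\set{\alpha:E_\alpha^i\paren{X}<\infty}$, as desired.
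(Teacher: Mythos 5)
Your proof is correct, and its skeleton is the same as the paper's: the paper also reduces the proposition to identifying both sides with the critical growth exponent of the counting function, via a standalone lemma asserting that for $Y\subset\R^+$ with $F\paren{\epsilon}=\abs{\set{y\in Y:y>\epsilon}}$ finite for every $\epsilon>0$, one has $\inf\set{\alpha:\sum_{y\in Y}y^\alpha<\infty}=\sup\set{c:\lim_{\epsilon\rightarrow 0}\epsilon^c F\paren{\epsilon}=\infty}$. The differences are in the vehicle. For the convergence direction ($\alpha$ above the exponent) the paper sums over dyadic shells $\set{y:2^{-k-1}<y\leq 2^{-k}}$ using $F\paren{\epsilon}<C_1\epsilon^{-\beta}$, which is just a discretization of your Tonelli integral, so nothing essential differs there. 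For the divergence direction the paper is simpler than you are: it bounds the sum at a single scale, $\sum_{y\in Y}y^\alpha\geq\sum_{y>\epsilon_0}y^\alpha\geq\epsilon_0^\alpha F\paren{\epsilon_0}$, and lets $\epsilon_0$ run along the subsequence on which $\epsilon^\alpha F\paren{\epsilon}$ is unbounded --- no monotonicity of the counting function is needed, so the step you single out as ``the hard part'' is an artifact of routing through the layer-cake integral (your monotonicity argument is nonetheless correct). Your parenthetical about $\lim$ versus $\limsup$ is a genuine catch rather than pedantry: the paper's definition literally writes $\lim_{\epsilon\rightarrow 0}$, but its own proof asserts ``by definition, $F\paren{\epsilon}<C_1\epsilon^{-\beta}$ for all $\epsilon$,'' which is only the negation of the unboundedness ($\limsup$) condition; under the literal-limit reading the right-hand side of the lemma computes the $\liminf$ exponent, and the identity can fail when the two exponents differ (e.g.\ lengths $2^{-2^n}$ with multiplicity $2^{2^{n+1}}$ give critical summability exponent $2$ but $\liminf$ exponent $1$), so the $\limsup$ reading you adopt is the one under which both your argument and the paper's are valid. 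Your preliminary reductions (finiteness of $\abs{I_{i,\epsilon}\paren{X}}$ via a finite net together with Corollary~\ref{stableCorollary}, and boundedness of the interval lengths via stability against a one-point set) are also correct, and supply exactly the finiteness hypothesis that the paper's lemma assumes and its preliminaries justify for compact $X$.
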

The proposition is an immediate consequence of the following lemma.
\begin{Lemma}
Suppose $Y\subset\R^+$ has the property that $\set{y\in Y: y>\epsilon}$ is finite for all $\epsilon>0.$ If
\[F\paren{\epsilon}=\abs{\set{y\in Y: y>\epsilon}}\]
then 
\[\inf\set{\alpha:\sum_{y\in Y} y^\alpha<\infty}=\sup\set{c:\lim_{\epsilon\rightarrow 0}\epsilon^c F\paren{\epsilon}=\infty}\,.\]
\end{Lemma}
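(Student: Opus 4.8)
The plan is to convert the series into a statement about the counting function $F$ and then to read off both sides of the claimed identity from the polynomial growth rate of $F$ near $0$. First I would rescale so that $Y\subset(0,1]$; this changes neither side, since replacing $Y$ by $\lambda Y$ multiplies $\sum_y y^\alpha$ by $\lambda^\alpha$ and replaces $F(\epsilon)$ by $F(\epsilon/\lambda)$. With $Y\subset(0,1]$ we have $F(\epsilon)=0$ for $\epsilon\ge 1$, and writing $y^\alpha=\int_0^y\alpha\epsilon^{\alpha-1}\,d\epsilon$ and exchanging sum and integral (Tonelli, all terms nonnegative) gives, for every $\alpha>0$,
\[\sum_{y\in Y}y^\alpha=\alpha\int_0^1\epsilon^{\alpha-1}F(\epsilon)\,d\epsilon\in[0,\infty].\]
Thus finiteness of the series is equivalent to convergence of this integral near $0$, and the lemma becomes a comparison between the convergence exponent of the integral and the growth rate of $F$. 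A completely elementary substitute, in the dyadic spirit of Lemma~\ref{lemma_upper_new1}, is to bound $\sum_y y^\alpha$ above and below by constant multiples of $\sum_k \bigl(F(2^{-k-1})-F(2^{-k})\bigr)2^{-k\alpha}$.

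The inequality $\inf\{\alpha:\sum_y y^\alpha<\infty\}\ge \sup\{c:\lim_{\epsilon\to0}\epsilon^cF(\epsilon)=\infty\}$ is the easy half. Fix any $c$ with $\epsilon^cF(\epsilon)\to\infty$; then $F(\epsilon)\ge\epsilon^{-c}$ for all sufficiently small $\epsilon$, so for any $\alpha<c$,
\[\int_0^\eta\epsilon^{\alpha-1}F(\epsilon)\,d\epsilon\ge\int_0^\eta\epsilon^{\alpha-1-c}\,d\epsilon=\infty,\]
because $\alpha-1-c<-1$. Hence $\sum_y y^\alpha=\infty$ for every $\alpha<c$, so $\inf\{\alpha:\ldots\}\ge c$, and taking the supremum over admissible $c$ gives the claim.

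For the reverse inequality I would argue at the level of exponents. Let $c_0$ denote the right-hand supremum and take any $\beta>c_0$; the goal is $\sum_y y^\beta<\infty$. Choosing an intermediate exponent $\alpha$ with $c_0<\alpha<\beta$, I would extract a uniform polynomial bound $F(\epsilon)\le K\epsilon^{-\alpha}$ valid for all small $\epsilon$, and then estimate
\[\int_0^\eta\epsilon^{\beta-1}F(\epsilon)\,d\epsilon\le K\int_0^\eta\epsilon^{\beta-1-\alpha}\,d\epsilon<\infty\]
since $\beta-1-\alpha>-1$, while $\int_\eta^1\epsilon^{\beta-1}F(\epsilon)\,d\epsilon$ is finite because $F$ is bounded on $[\eta,1]$. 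This forces $\sum_y y^\beta<\infty$ for every $\beta>c_0$, hence $\inf\{\alpha:\ldots\}\le c_0$, completing the identity.

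The main obstacle is precisely the extraction of the uniform bound $F(\epsilon)=O(\epsilon^{-\alpha})$ holding for \emph{every} small $\epsilon$, rather than merely along a subsequence. The clean way to obtain it is to identify $c_0$ with the critical exponent governing the growth of $F$, namely $\limsup_{\epsilon\to0}\log F(\epsilon)/\log(1/\epsilon)$: one checks that any $\alpha$ strictly above this limsup satisfies $\log F(\epsilon)/\log(1/\epsilon)<\alpha$ for all small $\epsilon$, which is exactly the bound $F(\epsilon)\le\epsilon^{-\alpha}$ needed above, and that below it the quantity $\epsilon^cF(\epsilon)$ is unbounded. Establishing this identification, and thereby converting the transition value $c_0$ into genuine uniform control on $F$, is where essentially all the care lies; the two integral comparisons are then routine, and the monotonicity of $F$ is what guarantees the growth exponent is well defined.
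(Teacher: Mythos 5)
Your overall route is the paper's own: the easy inequality is obtained by evaluating the tail of the series at suitable scales, and the hard inequality by extracting a uniform polynomial bound $F(\epsilon)\le K\epsilon^{-\alpha}$ for every $\alpha$ strictly above the critical exponent and then summing. Your integral representation $\sum_{y\in Y}y^{\alpha}=\alpha\int_0^1\epsilon^{\alpha-1}F(\epsilon)\,d\epsilon$ is a continuous cosmetic variant of the paper's dyadic decomposition over $I_k=\{y\in Y:2^{-(k+1)}<y\le 2^{-k}\}$ (you note the dyadic version yourself), and your easy half matches the paper's second half in substance. So everything hinges on the step you yourself flag as the crux.

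That step is where your proposal has a genuine gap: you resolve it by identifying $c_0=\sup\{c:\lim_{\epsilon\rightarrow 0}\epsilon^{c}F(\epsilon)=\infty\}$ with $L=\limsup_{\epsilon\rightarrow 0}\log F(\epsilon)/\log(1/\epsilon)$, justifying the lower half by the observation that for $c<L$ the quantity $\epsilon^{c}F(\epsilon)$ is \emph{unbounded}. But unboundedness is $\limsup_{\epsilon\rightarrow 0}\epsilon^{c}F(\epsilon)=\infty$, whereas the definition in the statement demands the full limit; as literally defined, $c_0$ equals the \emph{lower} exponent $\liminf\log F(\epsilon)/\log(1/\epsilon)$, which can be strictly smaller than $L$ even though $F$ is monotone --- your closing appeal to monotonicity is empty, since $F$ is automatically non-increasing and the two exponents may still differ. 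Concretely, let $\epsilon_n=2^{-2^n}$ (so $\epsilon_{n+1}=\epsilon_n^2$) and let $Y$ contain $\epsilon_n^{-2}$ distinct points in $[\epsilon_n,2\epsilon_n]$ for each $n$. Then $\sum_{y\in Y}y^{\alpha}\asymp\sum_n \epsilon_n^{\alpha-2}$, so the left-hand side of the lemma equals $2$; but for $\epsilon$ slightly above $2\epsilon_{n+1}$ one has $F(\epsilon)\asymp\epsilon_n^{-2}=\epsilon_{n+1}^{-1}$, whence $\epsilon^{c}F(\epsilon)\asymp\epsilon_n^{2(c-1)}\rightarrow 0$ along this sequence for every $c>1$, so the literal right-hand side equals $1$. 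Thus with the stated $\lim$ no argument can extract your uniform bound, because the asserted identity itself fails. You are, however, in good company: the paper's proof performs the identical silent conversion in one line (``By definition, there is a constant $C_1>0$ so that $F(\epsilon)<C_1\epsilon^{-\beta}$''), i.e., it reads the defining condition as $\limsup_{\epsilon\rightarrow 0}\epsilon^{c}F(\epsilon)=\infty$. Under that evidently intended reading your identification of $c_0$ with $L$ is correct, both halves of your argument are sound, and your proof coincides with the paper's; the fix for your write-up is to state that reading explicitly rather than asserting that unboundedness yields the limit version.
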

\begin{proof}
Let  $d_1=\inf\set{\alpha:\sum_{y\in Y} y^\alpha<\infty}$ and $d_2=\sup\set{c:\lim_{\epsilon\rightarrow 0}\epsilon^c F_{Y}\paren{\epsilon}=\infty}.$ 

Let $\alpha>\beta>d_2,$ $I_k=\set{y\in Y:2^{-\paren{k+1}}<y\leq 2^{-k}},$ and $C_0=\sum_{\set{\epsilon\in Y:\epsilon>1}}\epsilon^\alpha.$ By definition, there is a constant $C_1>0$ so that $F(\epsilon)< C_1 \epsilon^{-\beta}$ for all $\epsilon\in\R.$  Then
\begin{align*}
\sum_{y\in Y} y^\alpha\leq &\;\; \sum_{k=0}^{\infty} \abs{I_k} 2^{-\alpha k} +C_0\\
< &\;\; \sum_{k=0}^{\infty} F\paren{2^{-\paren{k+1}}} 2^{-\alpha k}+C_0\\
< &\;\; \sum_{k=0}^{\infty} C_1 2^{\beta\paren{k+1}}2^{-\alpha k}+C_0\\
= & \;\; C_1 2^{\beta}\sum_{k=1}^{\infty} 2^{\paren{\beta-\alpha}k}+C_0\\
< & \; \; \infty\,,
\end{align*}
because $\beta-\alpha<0.$ Therefore, $d_1\leq d_2.$

Let $\alpha<d_2$ and $D>0.$ $\lim_{\epsilon\rightarrow 0} \epsilon^{\alpha} F\paren{\epsilon}=\infty$ so there is an $\epsilon_0>0$ so that $\epsilon_0^{\alpha}F_Y\paren{\epsilon_0}>D.$ Then
\[\sum_{y\in Y} y^\alpha\geq \sum_{y>\epsilon_0} y^\alpha>\epsilon_0^{\alpha} F_Y\paren{\epsilon_0}>D\,,\]
so $\sum_{y\in Y} y^\alpha$ is greater than any positive real number and $\sum_{y\in Y} y^\alpha=\infty.$ Therefore $d_1\geq d_2,$ and $d_1=d_2.$
\end{proof}

We also show that $\text{dim}_{\widehat{\PH}}^i$ is bounded above by $\text{dim}_{\PH}^i:$

\begin{Proposition}
Let $X$ be a compact subset of a metric space. Then
\s{\text{dim}_{\widehat{\PH}}^i\paren{X}\leq \text{dim}_{\PH}^i\paren{X}\,.}
\end{Proposition}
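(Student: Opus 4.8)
The plan is to reduce the statement to the compact-set reformulation of $\text{dim}_{\PH}^i$ recorded just after Definition~\ref{defn_main}, and then to use $X$ itself as a test set. Recall that a stability argument (Theorem~\ref{thm_stable}) gives the equivalent characterization
\[\text{dim}_{\PH}^i\paren{X}=\inf\set{\alpha : \exists\,C\text{ so that }E_\alpha^i\paren{Y} < C\;\forall \text{ compact } \;Y\subseteq X}\,.\]
The key observation is simply that $X$, being compact, is itself an admissible choice of $Y$ in this formula. This is exactly the feature that the compact-set version of the definition buys us: the finite-point-set version does not directly let us evaluate $E_\alpha^i$ on the possibly-infinite interval set $\PH_i\paren{X}$, whereas the compact version does.

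Concretely, I would fix any $\alpha>\text{dim}_{\PH}^i\paren{X}$. By the characterization above there is a constant $C$ with $E_\alpha^i\paren{Y}<C$ for every compact $Y\subseteq X$. Applying this with the particular choice $Y=X$ yields $E_\alpha^i\paren{X}\leq C<\infty$. Hence every $\alpha>\text{dim}_{\PH}^i\paren{X}$ belongs to the set $\set{\alpha:E_\alpha^i\paren{X}<\infty}$, and taking the infimum gives
\[\inf\set{\alpha:E_\alpha^i\paren{X}<\infty}\leq\text{dim}_{\PH}^i\paren{X}\,.\]
Finally, I would invoke the preceding proposition, which identifies the left-hand side with $\text{dim}_{\widehat{\PH}}^i\paren{X}$, to conclude $\text{dim}_{\widehat{\PH}}^i\paren{X}\leq\text{dim}_{\PH}^i\paren{X}$, as desired.

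There is no serious obstacle here; essentially all of the substance is already packaged into the two reformulations being combined. The one point that genuinely requires care — and the reason the argument works at all — is the \emph{uniformity} of the constant $C$ over all compact $Y\subseteq X$: it is precisely this uniformity, rather than a finiteness guarantee for each individual finite subset, that permits specializing to $Y=X$ and controlling the full sum $E_\alpha^i\paren{X}$ over the bounded intervals of $\PH_i\paren{X}$. Since the compact-set definition supplies exactly this uniform $C$, the desired inequality follows immediately.
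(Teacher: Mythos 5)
Your proof is correct, but it takes a genuinely different route from the paper's, and the difference is worth spelling out. You reduce everything to the compact-set reformulation of $\text{dim}_{\PH}^i$ stated in the preliminaries, then specialize to $Y=X$ and invoke the preceding proposition identifying $\text{dim}_{\widehat{\PH}}^i\paren{X}$ with $\inf\set{\alpha:E_\alpha^i\paren{X}<\infty}$. The paper instead argues directly: given $\alpha<\text{dim}_{\widehat{\PH}}^i\paren{X}$ and $D>0$, it selects a finite subcollection of intervals of $\PH_i\paren{X}$ with $\sum_{j=1}^n\paren{d_j-b_j}^\alpha>D+1$, uses uniform continuity of $x\mapsto x^\alpha$ on the compact range of the chosen lengths, picks a finite $\textbf{x}\subset X$ with $d_H\paren{\textbf{x},X}<\epsilon$, and matches intervals via bottleneck stability (Theorem~\ref{thm_stable}) to conclude $E_\alpha^i\paren{\textbf{x}}>D$, so that $E_\alpha^i$ is unbounded over finite subsets and $\text{dim}_{\PH}^i\paren{X}\geq\alpha$. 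The mathematical core of both arguments is the same --- a finite truncation of $\PH_i\paren{X}$ plus bottleneck stability --- but your version outsources that core to the compact-set reformulation, which the paper only asserts (``a straightforward argument based on stability of the bottleneck distance'') and never proves. Note that the direction you need is exactly the nontrivial half of that equivalence: a uniform bound of $E_\alpha^i$ over all \emph{finite} $\textbf{x}\subset X$ implying a bound over all \emph{compact} $Y\subseteq X$; writing out its proof would reproduce essentially the paper's appendix argument, so the hidden work reappears if full self-containment is demanded. What your route buys is brevity and modularity (the appendix proposition becomes a one-line corollary of the reformulation); what the paper's route buys is a self-contained proof that, in effect, also supplies the missing half of the reformulation for $Y=X$. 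One point you were right to flag and should keep explicit: extracting a single uniform constant $C$ for each fixed $\alpha>\text{dim}_{\PH}^i\paren{X}$ uses that the admissible set of exponents is upward closed, which holds because $X$ is bounded (rescale so all interval lengths are at most one, as the paper does in Lemma~\ref{lemma_upper_new1}).
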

\begin{proof}
Let $\alpha<\text{dim}_{\widehat{\PH}}^i\paren{X}$ and $D>0.$ There is a finite collection of intervals $\set{b_j,d_j}_{j=1}^n\subset \PH_i\paren{X}$ with 
\begin{equation}
\label{eq_hat3}
\sum_{j=1}^n\paren{d_j-b_j}^\alpha>D+1\,.
\end{equation}
The function $f\paren{x}=x^\alpha$ is uniformly continuous on $\left[\min_j\paren{d_j-b_j},\max_j\paren{d_j-b_j}\right]$ so there is a $\epsilon>0$ so that 
\begin{equation}
\label{eq_hat2}
\abs{x^\alpha-\paren{x+\delta}^\alpha}<\frac{1}{n}
\end{equation}
for all $x\in\brac{\min_j\paren{d_j-b_j},\max_j\paren{d_j-b_j}}$ and $\abs{\delta}<\epsilon.$ Let $\textbf{x}\subset X$ be a finite point satisfying $d_H\paren{\textbf{x},X}<\epsilon.$ By bottleneck stability, the intervals $\set{\paren{b_j,d_j}}$ can be paired with intervals $\set{\paren{\hat{b}_j,\hat{d}_j}}\in\PH_i\paren{\textbf{x}}$ so that  
\begin{equation}
\label{eq_hat1}
\abs{\paren{d_j-b_j}-\paren{\hat{d}_j-\hat{b}_j}}<\epsilon\,,
\end{equation}
for $j=1,\ldots,n.$ 
(Note that $\textbf{x}$ might have other, smaller intervals, but it doesn't matter for our computations.) Then
\begin{align*}
E^i_{\alpha}\paren{\textbf{x}}=&\;\;\sum_{\paren{\hat{b},\hat{d}}\in\PH_i\paren{\textbf{x}}}\paren{\hat{d}-\hat{b}}^\alpha\\
\geq&\;\; \sum_{j=1}^n\paren{\hat{d}_j-\hat{b}_j}^\alpha\\
\geq&\;\; \sum_{j=1}^n\paren{\paren{d_j-b_j}^\alpha-\frac{1}{n}} &&\text{using Eqns.~\ref{eq_hat2} and~\ref{eq_hat1}}\\
>&\;\;D &&\text{by Eqn.~\ref{eq_hat3}}\,. 
\end{align*}
$D$ was arbitrary so  $E^i_{\alpha}\paren{\textbf{x}}$ can be made arbitrarily large and $\text{dim}_{\PH}^i\paren{X}\geq\alpha$ for all $\alpha<\text{dim}_{\widehat{\PH}}^i\paren{X}$ and $\text{dim}_{\PH}^i\paren{X}\geq \text{dim}_{\widehat{\PH}}^i\paren{X},$ as desired.
\end{proof}

\section{Triangle Computations}
\label{sec:triangleComputations}
Here, we complete the computations in the proof of Proposition~\ref{triangleLemma}.
\subsection{Minimizing $TP_1$}
\label{sec:triangleComputations1}

We will find the minimum of the function
\[TP_1\paren{x,y_1,y_2}=\frac{\sqrt{\paren{x^2 + y_1^2} \paren{x^2 + y_2^2}}}{2 x}-\frac{y_1-y_2}{2}\]
subject to the constraints  $c\sqrt{N}\leq y_1 \leq N,$ $-N \leq y_2 \leq -c\sqrt{N},$ and $c\sqrt{N}+\sqrt{-y_1y_2}\leq x \leq N.$ The partial derivative with respect to $x$ is

\[\frac{\partial TP_1\paren{x,y_1,y_2}}{\partial x}=\frac{ 2x^2+y_1^2+y_2^2}{2 \sqrt{\left(x^2+y_1^2\right) \left(x^2+y_2^2\right)}}-\frac{\sqrt{\left(x^2+y_1^2\right) \left(x^2+y_2^2\right)}}{2 x^2}=\frac{x^4-y_1^2 y_2^2}{2 x^2 \sqrt{\left(x^2+y_1^2\right) \left(x^2+y_2^2\right)}}\,,\]
which is positive when $x>\sqrt{-y_1 y_2}.$ Therefore, the minimum value of $TP_1$ is achieved when $x$ is as small as possible, $x=c\sqrt{N}+\sqrt{-y_1y_2}.$ Also, 
\[\frac{\partial TP_1\paren{x,y_1,y_2}}{\partial y_1}=\frac{y_1 \left(x^2+y_2^2\right)}{2 x \sqrt{\left(x^2+y_1^2\right) \left(x^2+y_2^2\right)}}-\frac{1}{2}=\frac{-x \sqrt{\left(x^2+y_1^2\right) \left(x^2+y_2^2\right)}+x^2 y_1+y_1 y_2^2}{2 x \sqrt{\left(x^2+y_1^2\right) \left(x^2+y_2^2\right)}}\,,\]

and (using that $x,y_1,$ and $-y_2$ are positive)
\begin{align*}
\frac{\partial TP_1\paren{x,y_1,y_2}}{\partial y_1} = 0 \iff & \;\;  -x \sqrt{\left(x^2+y_1^2\right) \left(x^2+y_2^2\right)}+x^2 y_1+y_1 y_2^2 =0 \\
\iff & \;\; \left(x^2 y_1+y_1 y_2^2\right)^2-x^2 \left(x^2+y_1^2\right) \left(x^2+y_2^2\right) = 0 \\
\iff & \;\; \left(x^2+y_2^2\right) \left(x^4-y_1^2 y_2^2\right)=0\\
\iff &\;\; x = \pm\sqrt{-y_1 y_2}\,\,
\end{align*} 
which is also not allowed by our constraints. We can see (for example, by plugging in $x=2 y_1,y_2=-y_1$ above) that this partial derivative is always negative when our constraints hold, and the minimum is achieved when $y_1$ is as large as possible, $y_1=N.$ A symmetric computation shows that $y_2=-N$ at the minimum. Therefore, 
\begin{align*}
TP_1\paren{x,y_1,y_2}\geq &\;\; TP_1\paren{c\sqrt{N}+N,N,-N}\\
= & \;\; \frac{\sqrt{\left(\left(c \sqrt{N}+N\right)^2+N^2\right)^2}}{2 \left(c \sqrt{N}+N\right)}-N\\
= & \;\; \frac{c^2 N}{2 \left(c \sqrt{N}+N\right)}\,.
\end{align*}

\subsection{Minimizing $TP_2$}
\label{sec:triangleComputations2}
We will find the minimum of the function
\[TP_2\paren{x,y_1,y_2}=\frac{\sqrt{\paren{x^2 + y_1^2} \paren{x^2 + y_2^2}}}{2 x}-\frac{\sqrt{x^2+y_1^2}}{2}\]
subject to the constraints  $c\sqrt{N}\leq y_1 \leq N,$ $-N \leq y_2 \leq -c\sqrt{N},$ and $c\sqrt{N}\leq x \leq N.$ The partial derivative of $TP_2$ with respect to $y_1$ is 
\[\frac{\partial TP_2\paren{x,y_1,y_2}}{\partial y_1}=\frac{y_1 \left(x^2+y_2^2\right)}{2 x \sqrt{\left(x^2+y_1^2\right) \left(x^2+y_2^2\right)}}-\frac{y_1}{2 \sqrt{x^2+y_1^2}}=\frac{y_1 \left(\sqrt{x^2+y_2^2}-x\right)}{2 x \sqrt{x^2+y_1^2}}\,,\]
which is always positive. Therefore the minimum will occur when $y_1=c\sqrt{N}.$ A symmetric computation shows that $y_2$ will equal $-c\sqrt{N}$ at the minimum. Plugging these values into $TP_2$ gives
\[TP_2\paren{x,c\sqrt{N},-c\sqrt{N}}= \frac{c^2 N+x^2}{2 x}-\frac{1}{2} \sqrt{c^2 N+x^2}\,.\]
This is a decreasing function of $x.$ To see this, take the derivative
\begin{align*}
\frac{\partial TP_2\paren{x,c\sqrt{N},-c\sqrt{N}}}{\partial x}=&\;\;\frac{1}{2}\frac{\partial\paren{ \sqrt{x^2+2c^2 N + c^4 N^2/x^2}-\sqrt{x^2+c^2 n}}}{\partial x}\\
= &\;\; \frac{1}{2}\paren{\paren{x-\frac{c^4 N^2}{x^3}}\paren{x^2+2c^2 N+\frac{c^4 N^2}{x^2}}^{-\frac{1}{2}}-x\paren{x^2+c^2 N}^{-\frac{1}{2}}}\\
< &\;\; \frac{x}{2}\paren{\paren{x^2+2c^2 N+\frac{c^4 N^2}{x^2}}^{-\frac{1}{2}}-\paren{x^2+c^2N}^{-\frac{1}{2}}}\\
< & \;\; 0\,\,
\end{align*}
because the function $y\mapsto y^{-\frac{1}{2}}$ is decreasing. Therefore, the minimum value of $TP_2$ is attained when $x$ is as large as possible and
\[TP_2\paren{x,y_1,y_2}\geq TP_2\paren{N,c\sqrt{N},-c\sqrt{N}}= \frac{1}{2} \left(c^2+N-\sqrt{N \left(c^2+N\right)}\right)\,,\]
which is a decreasing function of $N$ by an argument similar to the previous one.

%\bibliographystyle{plain}
%\bibliography{Bibliography}

\bibliographystyle{spmpsci}

\end{document}